\newtheorem{theorem}{Theorem}
\newtheorem{question}{Question}
\newtheorem{lemma}[theorem]{Lemma}
\newtheorem{cor}[theorem]{Corollary}
\newtheorem{Def}[theorem]{Definition}
\newtheorem{prop}[theorem]{Proposition}
\newtheorem{fact}[theorem]{Fact}
\theoremstyle{remark}
\numberwithin{equation}{section}
\def\ra{\rightarrow}
\def\rhob{ {\bar {\rho} } }
\def\rhobar{ {\bar {\rho} } }
\def\FF{{\mathbb F}}
\def\QQ{{\mathbb Q}}
\def\Q{{\mathbb Q}}
\def\ZZ{{\mathbb Z}}
\def\Z{{\mathbb Z}}
\def\TT{{\mathbb T}}
\def\T{{\mathbb T}}
\def\LP{ \left(\begin{array}}
\def\RP{ \end{array}\right)}
\def\LB{ \left[\begin{array}}
\def\RB{ \end{array}\right]}
\def\QQ{ {\mathbb Q}}
\def\eps{\epsilon}
\def\rhob{ {\bar {\rho} } }
\def\FF{ {\mathbb F} }
\def\wfq{{  W(\mathbb F_{p^f}) }}
\def\ad{Ad^0\bar{\rho}}
\def\mco{\mathcal O}
\def\Oc{\mathcal O}
\def\mcl{\mathcal L}
\def\mclt{ \tilde{\mathcal L} }
\def\epsb{ \bar{\epsilon}}
\def\gal{G_{\mathbb Q}}
\DeclareFontFamily{U}{wncy}{}
\DeclareFontShape{U}{wncy}{m}{n}{<->wncyr10}{}
\DeclareSymbolFont{mcy}{U}{wncy}{m}{n}
\DeclareMathSymbol{\Sh}{\mathord}{mcy}{"58}
\title{Lifting  torsion Galois  representations}
\author{Chandrashekhar Khare and Ravi Ramakrishna}
\address{\newline
Department of Mathematics, UCLA, Los Angeles, CA 90095-1555, USA \newline
Department of Mathematics, Cornell University, Ithaca, NY 14853-4210, USA}
\email{shekhar@math.ucla.edu, ravi@math.cornell.edu}
\thanks{The 
authors would like to thank the Tata Institute of Fundamental Research for its hospitality
while this paper was started. 
The first author was supported by NSF grant DMS-1161671 and by a Humboldt Research Award.
The second author also thanks the African Institute for Mathematical Sciences
in Muizenberg, South Africa for its hospitality.}
\begin{document}
\maketitle
\begin{abstract} Let $p$ be an odd prime and
$\mco/\ZZ_p$ be of degree $d=ef$  with uniformiser $\pi$ and residue field
$\FF_{p^f}$. Let
$\rho_n:\gal \to GL_2\left(\mco/(\pi^n)\right)$ 
have modular mod $\pi$ reduction $\rhob$, be ordinary at $p$,  and satisfy some mild technical conditions. We show that $\rho_n$ can be lifted to a characteristic 0 geometric representation which arises from a newform. Earlier methods could not handle the case  of $n>1$ when $e>1$.

 We  also show  that a prescribed totally ramified complete discrete valuation ring
$\mco$ is the weight $2$ deformation ring for $\rhob$ (or the semistable or flat quotient of this ring in a few cases)
for a suitable choice of auxiliary level.  
This implies
the field of Fourier coefficients  of newforms of square free level that give rise to $\rhobar$ can
have arbitrarily large ramification index  at $p$.

In  an appendix,  we use  some of the work in the paper to prove  by  $p$-adic approximation,  the  modularity  of  many ordinary,  geometric,   finitely ramified representations $\rho:G_\Q \rightarrow GL_2(\Oc)$ assuming the reduction is modular and irreducible. The previous methods  along these lines  in \cite{K} applied only for $\mco/\ZZ_p$ unramified.  In the appendix we are recovering  known  results of Wiles and Taylor-Wiles  by a different method.

\end{abstract}

\section{Introduction}

Let $\rhobar: G_\Q \rightarrow GL_2(k)$ be a continuous, odd, irreducible representation of the absolute Galois group $G_\Q$ of $\Q$  with $k$ a finite field of characteristic $p$. We call such a representation of $S$-type. 

We assume $p$ is odd.
The second  author  introduced in \cite{R2} a method of  killing dual Selmer groups 
to show that there were geometric lifts (i.e. ramified at only finitely many primes, and deRham at $p$) of  (most) $\rhobar$  of $S$-type, with ramification allowed at a set of  auxiliary primes $Q$. 
(Also see \cite{T} where the language of dual Selmer groups was introduced.)
 The goal of that work was to show that $\rhobar$ did have in the first place  a characteristic zero lift without assuming that $\rhobar$ was modular.   
Let  $W(k)$ is the ring of Witt vectors. 
Then the same method applied to lifting representations
 $\rho_n:G_\Q \rightarrow GL_2(W(k)/p^n))$, with reduction $\rhobar$ of $S$-type, to geometric characteristic 0 representations if $p>3$ provided $\rho_n$ is balanced in a sense made precise  below. 
 
  In this paper we address the question of producing geometric lifts of  representations $\rho_n:G_\Q \rightarrow GL_2(\Oc/(\pi^n))$ with reduction $\rhobar$ where $\Oc$ is the ring of integers of a ramified extension of $\Q_p$. The method of \cite{R2} does not work 
directly as  when  $\Oc/\ZZ_p$ ramified, the map
$GL_2(\Oc/(\pi^2)) \ra GL_2(\Oc/(\pi))$ is split. We 
do this under the conditions that $\rho_n$ is balanced, ordinary and has full image (cf. \S\ref{one}).

There are two steps. In Theorem \ref{easy} we firstly show that $\rho_n$ lifts to a characteristic 0, ordinary representation which is finitely ramified. Using this, in Theorem \ref{hard}, for which we only consider the case when $\mco$ is totally ramified,  we show in fact that we can also get lifts with the further property of being geometric at $p$.

For this purpose, we first  study  the ordinary (no weight is fixed) deformation  rings $R^{ord, Q-new}$,  and  construct  sets of primes $Q$ such that $R^{ord, Q-new}$ has $1$-dimensional tangent space and thus is a quotient of $\wfq[[U]]$.  Further $\rho_n$ arises as a specialization of the universal representation 
associated to  $R^{ord, Q-new}$. As  a certain dual Selmer group vanishes, we in fact deduce that  $R^{ord, Q-new} \simeq \wfq[[U]]$. This already ensures that $\rho_n$ lifts to an ordinary characteristic 0 representation which is ramified at finitely many primes (cf. Theorem \ref{easy}).  We also deduce that $R^{ord, Q-new}$ is isomorphic to  the related  Hida Hecke algebra $\T^{ord, Q-new}$, as the latter is  known to be non-zero and flat over $\Z_p[[T]]$ by level  raising results of \cite{DT} and Hida's 
results on the structure of  ordinary Hecke algebras respectively.

In Theorem \ref{hard} we further show that   by a more careful choice of $Q$, 
the weight $2$  quotient of  $R^{ord, Q-new}$  (or more precisely  the semistable or flat quotient of this weight 2 ring  in a few cases) can 
be  controlled to be  $\Oc$ and  $\rho_n$ arises from this weight 2 quotient.   
The proof of Theorem \ref{hard}  is   more involved than the proof of Theorem \ref{easy}. Essential use is made of the techniques of \cite{KLR}.

In \cite{R2} no assumption of modularity on $\rhobar$ was made. In proving Theorem~\ref{hard}, we only use modularity of $\rhobar$ in one place 
(cf. Lemma  \ref{disting}).  One of the interests of this theorem is that we manage to lift  the representation $\rho_n$ to a  characteristic 0 geometric lift (of weight 2), in spite of it being impossible to  kill the   minimal weight $2$ dual Selmer (and Selmer)  group of $\rhobar$  using primes which are nice for  
$\rho_n$ (not just $\rhob$!) when $\Oc/\ZZ_p$ is  ramified.   

This brings to mind a   folklore question:

\begin{question} Let $K$ be an imaginary quadratic field and let $\rhobar:G_K \rightarrow GL_2(k)$
be irreducible.
Do there always exist  characteristic $0$ geometric lifts of $\rhob$? 
\end{question} 

 Indeed its hard to guess an answer  either way with any confidence!  Here too a principal obstacle to lifting  is that a certain  dual Selmer group  cannot be killed. Presently we do not see  if our methods will help in answering the question affirmatively.

In the appendix, we  use the isomorphisms $ \wfq[[U]] \simeq R^{ord, Q-new} \simeq   \T^{ord, Q-new}$  above to extend the  approach of \cite{K} to modularity of geometric lifts $\rho$ of $\rhobar$ by $p$-adic approximation, without imposing the condition that $\rho$ is defined over the Witt vectors. This condition was essential in \cite{K} again because $GL_2(\Oc/(\pi^2)) \ra GL_2(\Oc/(\pi))$ is split if $\Oc/\Z_p$ is ramified. For  a similar reason it was assumed in \cite{K} that $p>3$ (as the homomorphism $GL_2(\Z/9) \rightarrow GL_2(\Z/3)$ is split),  an assumption that we remove in the appendix.  The appendix uses  only Theorem \ref{easy} of this paper (and not the more involved  Theorem \ref{hard}), together with level lowering arguments. We also note that Jack Thorne in \cite{Thorne} has used the strategy  of \cite{K} to prove modularity lifting theorems in new cases.

In the paper we have not striven for generality, and  have opted to  present the new ideas of this paper in their simplest setting.

{\sf Acknowledgements:}  Much of \S $2$, \S $3$ and Theorem~\ref{lundell} of this paper is unpublished work of Benjamin Lundell in his Cornell Ph.D. thesis, \cite{L}. The results presented here are slightly more general. The  authors thanks G.~B\"ockle,  B. Lundell and  M. Stillman for helpful conversations.


\section{The setup}\label{one}

Let $p \geq 3$ be a prime and
$\mco$ be the ring of integers of a finite   extension of $\QQ_p$ with uniformiser $\pi$
and residue field $k=\FF_{p^f}$, a finite extension of $\FF_p$.
Suppose $[\mco:\ZZ_p]=d=ef$ with $e,f$ the ramification and inertial degrees. Let $\eps$ be the $p$-adic cyclotomic character and let $\epsb$ be its mod $p$ reduction.

We consider  representations $\rhobar:G_\Q \rightarrow GL_2(k) $ of $S$-type with $k$ of odd characteristic and lifts $\rho_n: G_\Q \rightarrow GL_2(\Oc/(\pi^n))$ and $\rho:G_\Q \rightarrow GL_2(\Oc)$ of $\rhobar$. We assume that these representations are
odd, ordinary at $p$, have full image (i.e. the image contains $SL_2(k)$, $SL_2(\Oc/(\pi^n))$ and $SL_2(\Oc)$ respectively) and have determinant $\eps$, that $\rhob$ is modular and that all
these representations are of weight 2, by which we mean that their restriction to an inertia group $I_p$ at $p$ is conjugate to $\left(\begin{array}{cc}\eps & *\\ 0 & 1\end{array}\right)$, with $\eps$ the cyclotomic character. \footnote{This assumption is mainly for convenience and we could have gotten by assuming that the representations  restricted to $I_p$ are of the form  $\left(\begin{array}{cc}\eps^{k-1} & *\\ 0 & 1\end{array}\right)$ with $k \geq 2$ and $\rhobar$ is distinguished at $p$, with the further conditions on $\rho_n$ below when $k=2$  and $\rhobar$ is split at $p$. The assumption on the determinant is also of a similar nature.}

At $p$ we require  that 
\begin{itemize} 
\item If $\rhob\mid_{G_p} =\left(\begin{array}{cc}\epsb & *\\ 0 & 1\end{array}\right)$ is 
flat then $\rho_n \mid_{G_p}$ is either  flat or semistable, that is if $\rho_n \mid_{G_p}$ is not flat,
$\rho_n\mid_{G_p} =\left(\begin{array}{cc}\eps & *\\ 0 & 1\end{array}\right)$ and the $*$ arises
from taking a $p$-power root of a nonunit of $\ZZ_p$.
\item If $\rhob\mid_{G_p} =\left(\begin{array}{cc}\epsb & 0\\ 0 & 1\end{array}\right)$
then  $\rho_n\mid_{G_p}$ is   flat.
\end{itemize}
The first condition on $\rho_n \mid_{G_p}$ is simply requiring that it
lift to characteristic $0$ exist and is therefore necessary .
 The second condition is more restrictive.  
We do not know how to deal with the second case above if  $\rho_n \mid_{G_p}$ is not flat.

Let $S_0$ and $S$  be the sets
of ramified primes of $\rhob$ and $\rho_n$ respectively (these include $p$). 

We assume $\rhob$ is ordinary and modular of weight $2$  (hence odd)  and trivial nebentype
so $S\supset S_0 \supset \{p,\infty\}$.
We also assume that $\rho_n$ 
is {\bf balanced} at weight $2$. 
To understand this last 
condition, which involves properties of $\rho_n$ at $v\neq p$,  we recall some basics on Selmer and dual Selmer groups.

 For each $v\in S_0$ a smooth
quotient of the versal weight $2$ deformation ring of $\rhob \mid_{G_v}$ has been defined 
on pages $120$-$124$ of \cite{R2} and in \cite{T}. 
The  points of (the $Spec$ of) this smooth quotient are our allowable deformations and are denoted ${\mathcal C}_v$.
Corresponding to the tangent space of  this smooth quotient is a subspace $\mcl_v \subset H^1(G_v,\ad)$. 
Fact~\ref{h0ad0} follows from the discussion in \cite{R2} referred to above.

\begin{fact} \label{h0ad0} For all $v \in S_0$  
there exist ${\mathcal C}_v$ and $\mcl_v$ as above satisfying
$\dim \mcl_v =\dim H^0(G_v,\ad)+\delta_{vp}$, where $\delta_{vp}=0$ or $1$ depending on $v \neq p$ and $v=p$.
\end{fact}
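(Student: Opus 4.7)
The plan is to construct ${\mathcal C}_v$ and $\mcl_v$ case by case, following the prescriptions on pp.~120--124 of \cite{R2} (and the parallel discussion in \cite{T}), and then to verify the dimension formula by local cohomological bookkeeping. Three regimes need to be handled: $v = \infty$, finite $v \neq p$, and $v = p$.

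For $v = \infty$, since $p$ is odd, $H^i(G_\infty, \ad) = 0$ for $i \geq 1$, so $\mcl_\infty = 0$ and the smooth quotient can be taken vacuously. For finite $v \neq p$, define ${\mathcal C}_v$ as the minimally ramified deformation functor of $\rhob\mid_{G_v}$; one distinguishes subcases by the type of $\rhob\mid_{G_v}$ (unramified, tame principal series, Steinberg, induced from an unramified quadratic extension) and in each describes $\mcl_v$ concretely as the kernel of an explicit restriction map. The illustrative case $\rhob\mid_{G_v}$ unramified gives $\mcl_v = \mathrm{Im}\bigl(H^1(G_v/I_v, \ad^{I_v}) \to H^1(G_v, \ad)\bigr)$, of dimension $\dim H^0(G_v, \ad)$ via the cohomology of $\hat\ZZ$. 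The other subcases are verified in \cite{R2}, most efficiently by exhibiting $\mcl_v$ as Lagrangian under the local Tate pairing on $H^1(G_v, \ad)$, which together with the local Euler characteristic $\chi(G_v, \ad) = 0$ yields $\dim \mcl_v = \dim H^0(G_v, \ad)$.

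At $v = p$, take ${\mathcal C}_p$ to be the weight $2$ ordinary deformation functor (with the flat or semistable refinement dictated by the hypotheses on $\rho_n\mid_{G_p}$). Its tangent space $\mcl_p$ is the Greenberg Selmer subspace of $H^1(G_p, \ad)$ cut out by the $\rhob\mid_{G_p}$-stable upper-triangular filtration $F^+\ad \subset \ad$. A local Euler characteristic computation using $\chi(G_p, F^+\ad) = -2$, combined with Tate duality and the Greenberg exact sequence, yields $\dim \mcl_p = \dim H^0(G_p, \ad) + 1$, the extra $+1$ reflecting the positive Hodge--Tate weight. In the flat subcase, Fontaine--Laffaille theory furnishes a smooth flat quotient with the same tangent space dimension. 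The delicate point, already treated in \cite{R2}, is the construction and smoothness of ${\mathcal C}_p$ in the split ordinary case $\rhob\mid_{G_p} = \mathrm{diag}(\epsb, 1)$: there the flat refinement is mandatory (second bullet of the setup) and its smoothness requires Fontaine--Laffaille input. The remaining dimensional computations are routine.
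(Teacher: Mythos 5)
Correct, and essentially the same route: the paper proves the Fact by appeal to the constructions on pp.~120--124 of \cite{R2} and in \cite{T}, and your sketch fills in precisely the local cohomological bookkeeping those sources carry out (local Euler characteristic for finite $v\neq p$, a Greenberg-type count at $p$, Fontaine--Laffaille input for the flat quotient). One caveat worth flagging: your $\mcl_\infty = 0$ is the correct value, but it does not satisfy the displayed formula, since $\dim H^0(G_\infty,\ad)=1$ for odd $\rhob$; the statement should be read as applying to the finite places, with $\infty$ contributing $-1$ to the Wiles count on the side, as the phrase ``taking into account $v=\infty$'' preceding Corollary~\ref{prop16} tacitly acknowledges.
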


Let $M$ be an $\FF_{p^f}[\gal]$-module with ${\mathbb G}_m$-dual $M^*$ and let $R$ be the union of the places whose inertial
action on $M$ is nontrivial and $\{p,\infty\}$.
Let ${\mathcal M}_v \subset H^1(G_v,M)$ with annihilator
${\mathcal M}^{\perp}_v \subset H^1(G_v,M^*)$ under the perfect local pairing 
$$H^1(G_v,M) \times H^1(G_v,M^*) \to H^2(G_v,\FF_{p^f}(1)) \simeq \FF_{p^f}.$$
Set the Selmer group for the subspaces ${\mathcal M}_v \subset H^1(G_v,M)$ to be
$$H^1_{\mathcal M}(G_R,M):=\mbox{Ker}\left( H^1(G_R,M)  \to \oplus_{v \in R} \frac{H^1(G_v,M)}{{\mathcal M}_v}\right)$$
and the dual Selmer group
$$H^1_{{\mathcal M}^{\perp}}(G_R,M^*):=\mbox{Ker}\left( H^1(G_R,M^*)  \to \oplus_{v \in R} 
\frac{H^1(G_v,M^*)}{{\mathcal M}^{\perp}_v}\right)$$
 
Recall Proposition $1.6$ of \cite{W}:
\begin{prop}\label{wiles}
 $$\dim H^1_{\mathcal M}(G_R,M) -\dim H^1_{{\mathcal M}^{\perp}}(G_R,M^*)$$
$$=\dim H^0(\gal,M) -\dim H^0(\gal,M^*) + \sum_{v\in R}  
\left(\dim {\mathcal M}_v - \dim H^0(G_v,M)\right).$$
\end{prop}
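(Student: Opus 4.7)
The plan is to deduce the formula from the Poitou--Tate nine-term global duality exact sequence, combined with the global and local Euler--Poincar\'e characteristic formulas at $\QQ$. This is Wiles's classical argument from \cite{W}; I would reconstruct it in three steps.

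First, local Tate duality identifies $H^1(G_v,M^*)/\mathcal M_v^\perp$ with $(H^1(G_v,M)/\mathcal M_v)^\vee$ at each $v \in R$, and when this identification is inserted into the middle of the Poitou--Tate nine-term sequence one obtains the exact sequence
$$0 \to H^1_{\mathcal M}(G_R, M) \to H^1(G_R, M) \to \bigoplus_{v\in R}\frac{H^1(G_v,M)}{\mathcal M_v} \to H^1_{\mathcal M^\perp}(G_R, M^*)^\vee \to H^2(G_R, M) \to \bigoplus_{v\in R} H^2(G_v,M) \to H^0(\gal,M^*)^\vee \to 0.$$
The alternating sum of $\FF_{p^f}$-dimensions in this sequence vanishes, which expresses $\dim H^1_{\mathcal M} - \dim H^1_{\mathcal M^\perp}$ in terms of the global and local $H^i$'s, the $\dim \mathcal M_v$, and $\dim H^0(\gal, M^*)$.

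Second, to eliminate the $H^1$ and $H^2$ contributions I would apply the Tate Euler--Poincar\'e formulas. Globally at $\QQ$, using that $p$ is odd and hence $H^{\geq 1}(G_\RR, M) = 0$,
$$\dim H^0(G_R,M) - \dim H^1(G_R,M) + \dim H^2(G_R,M) = \dim H^0(G_\RR, M) - \dim M.$$
Locally, the Euler characteristic $\chi_v := \dim H^0(G_v,M) - \dim H^1(G_v,M) + \dim H^2(G_v,M)$ equals $0$ for finite $v\neq p$, equals $-\dim M$ for $v=p$, and equals $\dim H^0(G_\RR,M)$ for $v=\infty$. Substituting $\dim H^1(G_v,M) - \dim H^2(G_v,M) = \dim H^0(G_v,M) - \chi_v$ for each $v$ and $\dim H^1(G_R,M) - \dim H^2(G_R,M) = \dim H^0(G_R,M) - \dim H^0(G_\RR,M) + \dim M$ globally into the formula produced in the first step, the archimedean and $p$-adic contributions $\pm\dim M$ and $\pm\dim H^0(G_\RR, M)$ cancel (since $\sum_v \chi_v = -\dim M + \dim H^0(G_\RR,M)$ agrees with the global $\chi$), leaving exactly
$$\dim H^1_{\mathcal M}(G_R,M) - \dim H^1_{\mathcal M^\perp}(G_R,M^*) = \dim H^0(\gal,M) - \dim H^0(\gal, M^*) + \sum_{v\in R}\bigl(\dim \mathcal M_v - \dim H^0(G_v,M)\bigr),$$
as required.

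The proof is essentially bookkeeping once the Poitou--Tate sequence and the Euler--Poincar\'e formulas are in hand; there is no genuine obstacle, and the only minor subtlety is handling the archimedean contribution, which is transparent because $p$ is odd.
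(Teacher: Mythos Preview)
The paper does not prove this proposition: it is simply recalled as Proposition~1.6 of \cite{W} with no argument given. Your reconstruction is exactly the standard Wiles argument (Poitou--Tate exact sequence spliced with the local conditions, followed by the global and local Euler--Poincar\'e formulas, with the archimedean and $p$-adic terms cancelling), and it is correct.
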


Fact~\ref{h0ad0}, Proposition~\ref{wiles} and the fact that
$\rhob$ is odd with full image  together imply the ordinary weight $2$
Selmer group 
$H^1_{\mathcal L}(G_{S_0},\ad)$
and its dual Selmer group
$H^1_{{\mathcal L}^{\perp}}(G_{S_0},\ad)$
have the same
dimension. We need the following 
\vskip1em

\noindent
{\bf Balancedness Assumption.} Let $v \in S\backslash S_0$.
We assume a smooth quotient 
of the versal deformation ring for $\rhob\mid_{G_v}$ exists with points ${\mathcal C}_v$ and
induced subspace
$\mcl_v \subset H^1(G_v,\ad)$ such that $\dim H^0(G_v,\ad) = \dim \mcl_v$.
\vskip1em

The local conditions $\mcl_v$ for $v\in S_0$  are now standard so we do not recall
their definitions. See \cite{R2} and \cite{T}. We will be doing ordinary at $p$ deformation theory with 
arbitrary weights so we will also work with $Ad\rhob$, the full adjoint, as well.
We define local conditions
 ${\tilde {\mathcal L}}_v $ for the full adjoint.
\begin{Def} \label{tildes}
1) For $v\neq p$ set
$${\tilde {\mathcal L}}_v := {\mathcal L}_v \oplus H^1_{nr}(G_v,\FF_{p^f}) 
\subset H^1(G_v,\ad) \oplus H^1(G_v,\FF_{p^f})=H^1(G_v,Ad\rhob),$$
that is the direct sum of ${\mathcal L}_v$ and 
and the $\FF_{p^f}$-valued unramified twists in the dual numbers. 
Set $\tilde{\mathcal C}_v$ to be all unramified twists of the points ${\mathcal C}_v$.
\newline\noindent
2) For $v=p$,
set $W = \left(\begin{array}{cc} *& *\\0 & 0\end{array}\right)$ and
${\tilde {\mathcal L}}_p = \mbox{Ker} \left(H^1(G_p,Ad\rhob) \to H^1(I_p,Ad\rhob/W)\right)$. Then
$\mclt_p \supset \mcl_p$ and
$\mclt_p \supset H^1_{nr}(G_p,\FF_{p^f})$, the unramified twists, though it need not be the direct
sum of these subspaces.
Set $\tilde{\mathcal C}_p$ to be the ordinary deformations of $\rhob$ of  any
weight.
\end{Def}

\section{Local deformation rings}

\subsection{Ordinary deformation rings at $p$}

We need 
Lemmas~\ref{units}  and~\ref{fontaine} for $5$) of Proposition~\ref{vequalsp}.

\begin{lemma} \label{units} 
Let $L$ be the composite of $\QQ_p(\mu_{p^{\infty}})$ and the $\ZZ_p$-unramified extension
of $\QQ_p$.
Let $\psi:G_p \to \ZZ^*_p$ be a character.
Then there exists a nonsplit representation
$\rho_{\psi} :G_p \to GL_2(\ZZ_p)$ with $\rho_{\psi}=
\left(\begin{array}{cc} \psi & *\\ 0 & 1\end{array}\right)$ where $* \equiv 0$ mod $p$ but
$* \not \equiv 0$ mod $p^2$.
Furthermore, after base change to $L$, the $*$ arises, via Kummer theory, by taking $p$-power
roots of units of elements of $L$.
\end{lemma}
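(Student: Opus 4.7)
The plan is to construct $\rho_\psi$ by producing a suitable cohomology class in $H^1(G_p,\ZZ_p(\psi))$, scaling it by $p$, and then identifying its restriction to $G_L$ with a Kummer class attached to a unit.

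First I would establish existence of a non-torsion class in $H^1(G_p,\ZZ_p(\psi))$. Tate's local Euler characteristic formula for $G_p = G_{\QQ_p}$ with coefficients in the free $\ZZ_p$-module $\ZZ_p(\psi)$ gives
$$\mathrm{rk}_{\ZZ_p} H^0 - \mathrm{rk}_{\ZZ_p} H^1 + \mathrm{rk}_{\ZZ_p} H^2 = -[\QQ_p:\QQ_p] = -1,$$
so $\mathrm{rk}_{\ZZ_p} H^1(G_p,\ZZ_p(\psi)) \ge 1$. Pick a torsion-free class $c_0 \in H^1(G_p,\ZZ_p(\psi))$ whose reduction in $H^1(G_p,\FF_p(\bar\psi))$ is nonzero, and set $c := p\,c_0$. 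The cocycle $c$ takes values in $p\ZZ_p$ automatically, and because $c_0$ is nonzero modulo $p$, there is some $\sigma\in G_p$ with $c_0(\sigma)\notin p\ZZ_p$, giving $c(\sigma)\notin p^2\ZZ_p$. Thus the representation
$$\rho_\psi(\sigma) := \begin{pmatrix} \psi(\sigma) & c(\sigma) \\ 0 & 1 \end{pmatrix}$$
is nonsplit and satisfies $*\equiv 0 \pmod p$, $*\not\equiv 0\pmod{p^2}$.

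Next I would describe $c|_{G_L}$ via Kummer theory. Since $L=\QQ_p(\mu_{p^\infty})\cdot\QQ_p^{\mathrm{ur},\ZZ_p}$ is the maximal pro-$p$ abelian extension of $\QQ_p$, local class field theory shows the image of $G_L$ in $G_p^{\mathrm{ab}}\cong\widehat{\QQ_p^\times}$ is exactly the prime-to-$p$ part. Hence $\psi|_{G_L}$ has image in $\mu_{p-1}\subset\ZZ_p^\times$ and becomes trivial after passing to a finite prime-to-$p$ extension $L'/L$. Because $\mu_{p^\infty}\subset L\subset L'$, Kummer theory identifies
$$H^1(G_{L'},\ZZ_p)=\varprojlim_n (L')^\times/((L')^\times)^{p^n} \cong \widehat{\mathcal O_{L'}^\times}\oplus \ZZ_p,$$
where the $\ZZ_p$-summand is the valuation direction and $\widehat{\mathcal O_{L'}^\times}$ is the units direction.

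Finally I would arrange $c|_{G_L}$ to land in the units summand. Concretely, start with a $1$-unit $u=1+p\in\ZZ_p^\times\subset\mathcal O_L^\times$ and a compatible system of $p^n$-th roots $u^{1/p^n}$: for $\sigma\in G_L$ the relation $\sigma(u^{1/p^n})=\zeta_{p^n}^{b(\sigma)}u^{1/p^n}$ defines a homomorphism $b:G_L\to\ZZ_p$. Exploiting the freedom in choosing $c_0$ within its line in $H^1(G_p,\ZZ_p(\psi))$, I would show that $c_0$ may be taken so that $c_0|_{G_L}$ matches $b$ up to a prime-to-$p$ twist (and hence a Galois average), so $c|_{G_L}=p\cdot c_0|_{G_L}$ arises, via Kummer theory, from the unit $u\in\mathcal O_L^\times$.

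The main obstacle is the last step: showing that one can genuinely pick $c_0$ whose restriction lands in the units component rather than the valuation component. The key tool is the five-term inflation-restriction sequence for $L'/\QQ_p$ together with a $\mathrm{Gal}(L'/\QQ_p)$-equivariant analysis of $\widehat{\mathcal O_{L'}^\times}\oplus\ZZ_p$ under the twisted action of $\psi$, showing that the image of restriction $H^1(G_p,\ZZ_p(\psi))\to H^1(G_{L'},\ZZ_p)$ meets the $\widehat{\mathcal O_{L'}^\times}$-component nontrivially. This is essentially a dimension count: the units summand is infinite-dimensional in the relevant pro-$p$ sense and picks up all the cohomology coming from $1$-units of $\QQ_p$, so a suitable $c_0$ exists.
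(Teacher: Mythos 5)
Your first paragraph is essentially the paper's argument (the paper works with $H^1(G_p,\QQ_p(\psi))$ and scales by conjugating with $\mathrm{diag}(p^m,1)$, but the effect is the same), and your reduction of the Kummer statement to understanding the restriction to $G_L$ is the right framing. The problem is the last step, which you flag as the main obstacle and then do not actually carry out.

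The gap is real and it is not a mere matter of filling in a routine equivariant dimension count. When $\psi \neq \eps$, $H^1(G_p,\QQ_p(\psi))$ is one-dimensional, so there is no ``freedom in choosing $c_0$ within its line'': every choice is a scalar multiple of every other, and scaling cannot move the restriction between the unit component and the valuation component of $\widehat{(L')^\times}$. You therefore cannot \emph{arrange} $c_0$ to match the Kummer class of $1+p$; you must \emph{prove} that the unique line (after restriction and $p$-power roots) already lives in the unit direction. Your closing sentence (``the units summand is infinite-dimensional\ldots so a suitable $c_0$ exists'') implicitly assumes you have choices you do not have. Moreover, the assertion is genuinely delicate: the paper introduces the integer $d$ with $\psi \equiv \eps \pmod{p^d}$, $\psi \not\equiv \eps \pmod{p^{d+1}}$, and shows only that the mod $p^{m-1-d}$ reduction arises from a unit of $L_m$; the closer $\psi$ is to $\eps$, the more of the uniformizer one can smuggle in, and the paper's valuation computation is exactly what controls this.

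Concretely, the paper's argument is: reduce mod $p^m$ and set $D_m$ to be the maximal abelian $p^{m-1}$-torsion extension of $L_m$; the Kummer pairing identifies $\mathrm{Gal}(D_m/L_m)$ with the $\mathbb{G}_m$-dual of $L_m^*/(L_m^*)^{p^{m-1}}$, so the subextension cut out by $\rho_{\psi,m}$ corresponds to an element $\alpha$ generating an $\eps/\psi$-eigenline. Write $\alpha = \pi_m^{p^k a}u$ with $p\nmid a$ and $u$ a unit; applying $\sigma\in\mathrm{Gal}(L_m/\QQ_p)$ and comparing with $\alpha^{(\eps/\psi)(\sigma)}$ forces $(\pi_m^{p^k a})^{(\eps/\psi)(\sigma)-1}$ to be a $p^{m-1}$-th power, hence $p^k((\eps/\psi)(\sigma)-1)\equiv 0 \pmod{p^{m-1}}$, hence $k \geq m-1-d$ after choosing $\sigma$ to attain the valuation $d$. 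That valuation estimate is the substance of the lemma, and it is absent from your proposal. (The case $\psi=\eps$ is handled separately and explicitly in the paper via $x^{p^m}-(1+p)^{p^{m-1}}$, which is where your ``choose the right line in a two-dimensional space'' picture is correct; but your write-up does not distinguish the two cases.)
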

\begin{proof} That $\rho_{\psi}$ exists follows from the well-known fact that
$$\dim H^1(G_p, \QQ_p(\psi)) = \left\{ \begin{array}{cc} 2 & \psi = \eps \\ 1 & \psi \neq \eps
\end{array}\right..$$
Simply take $f \in H^1(G_p,\QQ_p(\psi))$,  consider
$\rho_f:G_p \to GL_2(\QQ_p)$ given by 
$\rho(\tau) =\left(\begin{array}{cc} \psi(\tau) & f(\tau) \\0 & 1\end{array}\right)$ and
conjugate by an appropriate power of 
$\left(\begin{array}{cc} p&0\\0&1\end{array}\right)$ to get the desired integral representation.
Rather than deal with $\rho_{\psi}$, we deal with its mod $p^m$ reduction, $\rho_{\psi,m}$.
 Let $L_m$ be the composite of $\QQ_p(\mu_{p^{m}})$ and the $\ZZ/p^{m-1}$-unramified extension
of $\QQ_p$.
Note 
the $*$ in $\rho_{\psi,m}$ gives rise to a cyclic  extension of order $p^{m-1}$, {\it not} order
$p^m$.

We treat the $\psi=\eps$ case separately. Here we can just take $\rho_{\eps,m}$ to arise
from the splitting field of $x^{p^m}-(1+p)^{p^{m-1}}$.

For $\psi \neq \eps$, let $d$ be the unique  integer satisfying $\psi \equiv \eps$ mod $p^d$
and $\psi \not \equiv \eps$ mod $p^{d+1}$.
Let $D_m$ be the maximal abelian
extension of $L_m$ whose Galois group is killed by   $p^{m-1}$. Let $K_m$ be the composite
of $L_m$ and the field fixed by Kernel$(\rho_{\psi,m})$.
\vskip1em\noindent
\hspace{5cm}
\begin{tikzpicture}[description/.style={fill=white,inner sep=2pt}]
\matrix (m) [matrix of math nodes, row sep=3em,
column sep=2.5em, text height=1.2ex, text depth=0.25ex]
{ & D_m & \\
&  & K_m\\
&  L_m & \\ 
& \QQ_p & \\};
\path (m-4-2) edge  (m-3-2);
\path (m-3-2) edge  (m-2-3);
\path(m-2-3) edge   (m-1-2);
\path(m-3-2) edge   (m-1-2);
\end{tikzpicture}
\newline
\noindent
 The Kummer pairing
$Gal(D_m/L_m) \times L_m^*/{L_m^*}^{p^{m-1}} \to \mu_{p^{m-1}}$ is perfect and $Gal(L_m/\QQ_p)$-equivariant
so $Gal(D_m/L_m)$ is isomorphic to the ${\mathbb G}_m$-dual of $L_m^*/{L_m^*}^{p^{m-1}}$
as a $Gal(L_m/\QQ_p)$-module.
As $K_m/L_m$ is a cyclic  extension of order $p^{m-1}$ and $K_m/\QQ_p$ is Galois
with
 $Gal(L_m/\QQ_p)$ acting on $Gal(K_m/L_m)$ by the character $\psi$, we see
$K_m$ arises over $L_m$ by adding $p^{m-1}$st roots of an element $\alpha \in 
L_m^*/(L_m^*)^{p^{m-1}}$
which, by the above ${\mathbb G}_m$-duality, 
generates a $\eps/\psi$-eigenspace in this group. So we need to prove such an eigenspace exists
in the {\it unit} part of $L_m^*/(L_m^*)^{p^{m-1}}$.

Recall
$L_m^* \simeq \left<\pi_m\right> \times  U_{L_m}$
where $\pi_m$ is a uniformiser of $L_m$ and
 $U_{L_m}$ is the group of units.
Write $\alpha =\pi_m^{p^ka}u$ where $p\nmid a$ and $u \in U_{L_m}$
so $K_m=L_m(\alpha^{1/p^{m-1}})$.
Let $\sigma \in Gal(L_m/\QQ_p)$ and set $\sigma(\pi_m)=\pi_mw_{\sigma}$  
and $\sigma(u)=u_{\sigma}$  where 
$w_{\sigma}, u_{\sigma} \in U_{L_m}$.
We have 
$$\sigma(\alpha) =\sigma(\pi_m^{p^ka}u) = \sigma(\pi_m)^{p^ka}\sigma(u)=
\pi_m^{p^ka}w^{p^ka}_{\sigma}u_{\sigma}.$$
But we also have
$$\displaystyle\sigma(\alpha) \equiv (\alpha)^{\frac{\eps}{\psi}(\sigma)} \equiv
(\pi^{p^ka}_mu)^{\frac{\eps}{\psi}(\sigma)} \mbox{ mod } (L^*_m)^{p^{m-1}}$$
so we get 
$$(\pi^{p^ka}_m)^{\frac{\eps}{\psi}(\sigma)-1} \equiv \mbox{a unit mod } (L^*_m)^{p^{m-1}}.$$
This can only happen if the left side is trivial, that is the exponent of $\pi_m$ is a multiple
of $p^{m-1}$. Thus
$$p^k \left(\frac{\eps}{\psi}(\sigma)-1\right) \equiv 0 \mbox{ mod } p^{m-1}.$$
Since $\sigma$ is arbitrary,  the definition of $d$ implies $k+d \geq m-1$ so $k \geq m-d-1$.
Thus when we take the $p^{m-d-1}$st root of $\alpha$ and adjoin this to $L_m$, we are taking
the root of a unit. So $\rho_{\psi,m-1-d}$ arises as desired, by taking the $p$-power root of a unit. 
Now simply let $m\to \infty$.
\end{proof}
It is possible to build mod $p^m$ representations that are extensions of $1$ by $\psi$ that
do  arise by taking $p^{m-1}$st roots of nonunits of $L_m$. The proof of Lemma~\ref{units}
shows such extensions, however, do {\it not}
lift to characteristic zero when $\psi \neq \eps$.

\begin{lemma}\label{fontaine}Let the hypotheses be as in Lemma~\ref{units} and
let $\psi=\eps$. Then Kernel$(\rho_{\eps,m})$ fixes the splitting field of $x^{p^{m}} -a$
for some $a \in \ZZ_p$. This Galois representation
corresponds to finite flat group scheme over $\ZZ_p$ if and only if $a \in \ZZ^*_p$.
\end{lemma}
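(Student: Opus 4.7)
The plan is to parametrize $\rho_{\eps,m}$ via Kummer theory and detect finite flatness from where the Kummer class sits. Applying the isomorphism $H^1(G_p,\mu_{p^m})\cong \QQ_p^*/(\QQ_p^*)^{p^m}$, the extension class of $\rho_{\eps,m}$ (an extension of the trivial character by $\eps$, with values in $\mu_{p^m}$) is represented by some $a\in\QQ_p^*$, and the fixed field of $\mbox{Kernel}(\rho_{\eps,m})$ is $\QQ_p(\zeta_{p^m},a^{1/p^m})$, which is the splitting field of $x^{p^m}-a$. Multiplying $a$ by a suitable nonnegative power of $p^{p^m}$ (a $p^m$-th power, hence trivial in the Kummer group) we may arrange $a\in\ZZ_p$, giving the first assertion.

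For the finite-flat criterion, I would use the fppf Kummer sequence $1\to\mu_{p^m}\to\mathbb{G}_m\xrightarrow{p^m}\mathbb{G}_m\to 1$ over $\mbox{Spec}(\ZZ_p)$, yielding $H^1_{\mbox{fppf}}(\mbox{Spec}\,\ZZ_p,\mu_{p^m})\cong\ZZ_p^*/(\ZZ_p^*)^{p^m}$. This group classifies finite flat extensions of the constant group $\ZZ/p^m$ by $\mu_{p^m}$ over $\ZZ_p$, and the generic-fiber map becomes the embedding $\ZZ_p^*/(\ZZ_p^*)^{p^m}\hookrightarrow\QQ_p^*/(\QQ_p^*)^{p^m}$ (injectivity holds because no element of nonzero valuation is a $p^m$-th power of a unit). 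Hence $\rho_{\eps,m}$ comes from a finite flat $\ZZ_p$-model iff its Kummer class lies in the image, iff we may take $a\in\ZZ_p^*$. In the forward direction one can exhibit the model concretely: for $a\in\ZZ_p^*$, the $\mu_{p^m}$-torsor $\mbox{Spec}(\ZZ_p[y]/(y^{p^m}-a))$ carries a natural finite flat extension structure realizing the Galois module.

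The main subtlety I anticipate is justifying the identification of flat extensions with the fppf $H^1$ and the commutative diagram comparing them with Galois extensions on the generic fiber. These are classical (cf.\ Fontaine's and Raynaud's work on finite flat group schemes over $\ZZ_p$), but would need to be set up carefully; once in place, the proof reduces to matching the ``units versus all of $\QQ_p^*$'' dichotomy modulo $p^m$-th powers.
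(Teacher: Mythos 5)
Your proposal is correct, but it takes a genuinely different route from the paper's sketch. The paper's argument invokes Fontaine--Laffaille theory to \emph{count} extensions of the trivial module by $\ZZ/(p^m)(\eps)$ in the category of finite flat group schemes over $\ZZ_p$ (with the constraint $*\equiv 0 \bmod p$, $*\not\equiv 0 \bmod p^2$), finding exactly $m-1$, all matching units $a$. You instead \emph{compute} the relevant Ext group via the fppf Kummer sequence and compare it with the generic-fibre Galois Ext group, reducing the criterion to the containment $\ZZ_p^*/(\ZZ_p^*)^{p^m}\hookrightarrow \QQ_p^*/(\QQ_p^*)^{p^m}$. Both arrive at the same conclusion; yours is closer to a direct computation of the obstruction to prolongation and avoids the enumeration.

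Two points you should make explicit for the argument to close. First, $H^1_{\mathrm{fppf}}(\ZZ_p,\mu_{p^m})$ by itself classifies $\mu_{p^m}$-torsors, not extensions of $\ZZ/p^m$ by $\mu_{p^m}$; you need the short step $\mathrm{Ext}^1_{\mathrm{fppf}}(\ZZ/p^m,\mu_{p^m})\simeq H^1_{\mathrm{fppf}}(\ZZ_p,\mu_{p^m})$ via the resolution $0\to\ZZ\xrightarrow{p^m}\ZZ\to\ZZ/p^m\to 0$ together with $\mu_{p^m}(\ZZ_p)=1$. Second, and more substantively, for the \emph{only if} direction you must justify that any finite flat $\ZZ_p$-group scheme $G$ with generic fibre $\rho_{\eps,m}$ actually \emph{is} an extension of $\ZZ/p^m$ by $\mu_{p^m}$ over $\ZZ_p$: by Raynaud's full faithfulness the Galois submodule $\mu_{p^m}\subset G_{\QQ_p}$ prolongs to a closed flat subgroup scheme $H\subset G$, and by uniqueness of finite flat models over $\ZZ_p$ (using $e=1<p-1$, so $p\geq 3$ is essential) one has $H\simeq\mu_{p^m}$ and $G/H\simeq\ZZ/p^m$. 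You gesture at this with your reference to Fontaine and Raynaud, but it is the load-bearing input and should be stated. With those two additions the proof is complete and is, if anything, cleaner than the counting argument sketched in the paper.
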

\begin{proof} We only sketch the proof.

It is (again) well-known that $H^1(G_p,\ZZ_p/(p^m) (\eps) ) \simeq (\ZZ_p/(p^m))^2$. The representations
associated to the splitting fields of 
$x^{p^{m}}-p$ and $x^{p^m}-(1+p)$ correspond to cohomology classes that form a basis for this module.

Since $* \equiv 0$ mod $p$ and $* \not \equiv 0$ mod $p^2$ we have that
$a$ is a $p$th power in $\ZZ_p$ but not a $p^2$th power.

If $a$ is a unit it is clear that the $G_p$-module corresponding to $\rho_{\eps,m}$ comes
from a finite flat group scheme over $\ZZ_p$.

Using Fontaine-Lafaille theory, \cite{FL}, one can count how many extensions of $\ZZ/(p^m) (\eps)$
by $\ZZ/(p^m)$ there are in the category of finite flat group schemes over $\ZZ_p$
up to isomorphism. One finds $m-1$ of them where the $*$ is as above.
These correspond to $a=(1+p)^p, (1+p)^{p^2}, \dots ,(1+p)^{p^{m-1}}$, all of which are units.
\end{proof}

\begin{prop} \label{vequalsp} 
Let
 $\bar \eta:G_p \to \FF^*_q$ be a nontrivial unramified character. 
Set $h^0 =\dim H^0(G_p,Ad\rhob)$.
Up to twist we have the the following
possibilities for $\rhob \mid_{G_p}$ and its {\bf local} deformation ring: 
\newline\noindent
1) $\rhob \mid_{G_p}=\left(\begin{array}{cc} \bar \eta\epsb& 0\\0 & 1\end{array}\right)$.
Here $\dim \mclt_p =4$, $\dim H^0(G_p,Ad\rhob)=2$
and the ordinary deformation ring is smooth in $4$ variables.
\newline\noindent
2) $\rhob \mid_{G_p}=\left(\begin{array}{cc} \bar \eta\epsb & *\\0 & 1\end{array}\right)$.
Here $\dim \mclt_p =3$, $\dim H^0(G_p,Ad\rhob)=1$
and the ordinary deformation ring is smooth in $3$ variables.
\newline\noindent
3) $\rhob \mid_{G_p}=\left(\begin{array}{cc} \epsb & *\\0 & 1\end{array}\right)$ is flat.
Here $\dim \mclt_p =3$, $\dim H^0(G_p,Ad\rhob)=1$
and the ordinary deformation ring is smooth in $3$ variables.
\newline\noindent
4) $\rhob \mid_{G_p}=\left(\begin{array}{cc} \epsb & *\\0 & 1\end{array}\right)$ is not flat.
Here $\dim \mclt_p =3$, $\dim H^0(G_p,Ad\rhob)=1$
and the ordinary deformation ring is smooth in $3$ variables.
\newline\noindent
5) $\rhob \mid_{G_p}=\left(\begin{array}{cc} \epsb & 0\\0 & 1\end{array}\right)$. Here 
$\dim \mclt_p=5$ and 
the ordinary deformation ring is not smooth, but it has a smooth quotient in four variables
whose characteristic zero points include all points of weight $k>2$ and all flat
points of weight $k=2$.
So we redefine $\mclt_p$ to be the $4$ dimensional subspace induced by this quotient and
note $\dim H^0(G_p,Ad\rhob)=2$.
\end{prop}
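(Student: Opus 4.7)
The plan is to handle the five cases in parallel via a common toolbox: (i) compute $h^0 := \dim H^0(G_p, Ad\rhob)$ directly from the matrix form of $\rhob|_{G_p}$, (ii) compute $\dim \mclt_p$ using the defining presentation $\mclt_p = \ker(H^1(G_p,Ad\rhob)\to H^1(I_p,Ad\rhob/W))$ together with the long exact sequence attached to
$$0 \to W \to Ad\rhob \to Ad\rhob/W \to 0,$$
local Tate duality, and the local Euler characteristic formula, and (iii) read off the structure of the ordinary deformation ring by exhibiting lifts realizing every tangent vector (smoothness) or identifying the obstruction explicitly.

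For (i): in cases $1$ and $5$ where $\rhob|_{G_p}$ is diagonal, the full diagonal subalgebra of $Ad\rhob$ is $G_p$-fixed, giving $h^0 = 2$; in cases $2$--$4$ the nontrivial upper-right entry $*$ forces the centralizer of the image to reduce to scalars, giving $h^0 = 1$. For (ii), I would use that $H^1(G_p, W) \to H^1(G_p, Ad\rhob)$ factors through $\mclt_p$, and then compute the cokernel of this map inside $\mclt_p$ as a subspace of $(Ad\rhob/W)^{G_p}$; the numerical counts $\dim\mclt_p = 4,3,3,3,5$ then drop out by combining local Euler characteristic for $W$ with the values of $h^0$ obtained above.

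For (iii) in cases $1$--$4$, smoothness follows because the obstruction space for the ordinary-at-any-weight deformation functor vanishes. In case $1$ this uses that $\bar\eta \neq 1$, so every eigen-character appearing in the Tate twist $Ad\rhob(1)$ is nontrivial and $H^2(G_p, Ad\rhob)$ is controlled; in cases $2$--$4$ the presence of the nonsplit class kills the analogous obstructions. Together with the computation of $\dim\mclt_p$ this identifies the local deformation ring with $\wfq[[x_1,\dots,x_{\dim\mclt_p}]]$, as asserted.

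The main obstacle, and the reason Lemmas~\ref{units} and~\ref{fontaine} were proved, is case $5$. Here $\rhob|_{G_p}$ is split $\mathrm{diag}(\epsb,1)$, so there are two distinct $G_p$-stable lines to deform; this inflates $\dim\mclt_p$ to $5$ but also creates a genuine obstruction. My plan is to fix the ordinary filtration with $\epsb$ on top and then carve out the $4$-dimensional subspace of $\mclt_p$ consisting of tangent vectors giving either a deformation of weight $k > 2$ or a flat weight $2$ deformation. Lemma~\ref{units} shows that weight $2$ extensions of $1$ by a character $\psi$ arising by $p$-power roots of non-units of $L$ do \emph{not} lift to characteristic zero when $\psi \neq \eps$, and Lemma~\ref{fontaine} pinpoints exactly which weight $2$ extensions of $1$ by $\eps$ correspond to finite flat group schemes over $\ZZ_p$. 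Together these identify the single obstructed tangent direction, and removing it produces the desired smooth $4$-variable quotient of the ordinary deformation ring whose characteristic zero points are exactly those described in the statement.
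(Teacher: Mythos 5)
Your outline has two substantial gaps, both concentrated where the paper's own proof puts the real work.

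\textbf{Smoothness in cases 1--4.} You write that smoothness follows because ``the obstruction space for the ordinary-at-any-weight deformation functor vanishes,'' and then gesture at $H^2(G_p,Ad\rhobar)$. But the ordinary deformation functor is a subfunctor of all deformations, and its obstruction theory is \emph{not} governed by $H^2(G_p,Ad\rhobar)$ --- the ordinarity condition is not of the form ``lands in a smooth unrestricted subfunctor'' in any obvious way. The paper sidesteps this by a different device: it works with the \emph{upper triangular} submodule $U\subset Ad\rhobar$, shows $H^2(G_p,U)=0$ (so upper triangular deformations are genuinely unobstructed), and then observes that the ordinary deformation ring is the quotient of this smooth upper triangular ring by one relation (unramifiedness of the lower-right diagonal entry), which can be taken as a coordinate. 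Your argument needs some analogous reduction --- the bare claim about $H^2(G_p,Ad\rhobar)$ doesn't supply one, and in some cases $H^2(G_p,Ad\rhobar)$ is in fact nonzero, which would leave the unrestricted deformation ring non-smooth even before imposing ordinarity.

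\textbf{Case 5.} This is where the delicacy lies, and your argument here is circular. You correctly observe that Lemmas~\ref{units} and \ref{fontaine} let you identify the ``bad'' tangent direction (the tr\`es ramifi\'ee one), and you conclude ``removing it produces the desired smooth $4$-variable quotient.'' But identifying an offending tangent vector of a non-smooth ring does not, by itself, produce a smooth quotient, much less one with the prescribed characteristic-zero points. The paper constructs a specific quotient $R^{ord,unit}$ (deformations factoring through the field generated by $p$-power roots of \emph{units} of $L$), and then proves $R^{ord,unit}\simeq\ZZ_p[[U_1,U_2,U_3,U_4]]$ by a chain of steps: first $R^{ord,unit,\psi_1,\psi_2}_k\simeq \ZZ_p[[U]]$, which uses Lemma~\ref{units} in an essential way to exhibit infinitely many characteristic zero points and thereby rule out any relation via Weierstrass preparation; then it climbs up through $R^{ord,unit,\psi_2}_k$, $R^{ord,unit}_k$, $R^{ord,unit}$ by varying the unramified characters $\psi_i$ and the weight $k$, each time using Krull's principal ideal theorem to rule out relations. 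Lemma~\ref{fontaine} then shows the flat weight $2$ points really do land in this quotient. None of this bootstrapping appears in your sketch; without it the existence and explicit structure of the four-variable smooth quotient is simply asserted, not proved.

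On the bookkeeping side, your proposed route to $\dim\mclt_p$ (long exact sequence for $0\to W\to Ad\rhobar\to Ad\rhobar/W\to 0$, local Euler characteristic) is a legitimate alternative to the paper's direct enumeration of cocycle directions; just be careful that the cokernel of $H^1(G_p,W)\to\mclt_p$ lands in $H^1_{nr}(G_p,Ad\rhobar/W)$, which you then identify with $(Ad\rhobar/W)^{G_p}$ by the standard coinvariants isomorphism --- stating the isomorphism is harmless, but the direct-image claim as written is imprecise.
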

\begin{proof} Let $U \subset Ad\rhob$ be the upper triangular matrices. In each case we will compare
the unrestricted (local) upper triangular deformation theory of $\rhob \mid_{G_p}$ to its (local) 
ordinary deformation theory.

1) 
$\mclt_p$ includes the two unramified twists, the one ramified twist of $\bar \eta\epsb$ and the nontrivial
extension of $1$ by $\bar \eta\epsb$ so $\dim \mclt_p=4$. 
One easily sees that  $\dim H^2(G_p,U)=0$ and $\dim H^1(G_p,U)=5$ so the upper triangular deformation
ring is smooth in $5$ variables. Its ordinary quotient is formed by forcing
the lower right entry to be unramified. 
This involves the one relation that comes from setting the ramified part of the lower right entry, when evaluated
at a topological generator of the Galois group over $\QQ_p$ of the cyclotomic extension, to be trivial.
Since this relation necessarily cuts the tangent space down by one
variable, we can  take it to be a variable of the $5$ dimensional upper triangular ring,  so the ordinary ring is smooth in
$4$ variables.

2)
One computes
 $\dim H^2(G_p,U)=0$ and $\dim H^1(G_p,U)=4$ so the upper triangular
deformation ring is smooth in $4$ variables. 
 Let $U^1$ be the matrices of the form $\left(\begin{array}{cr} a& b\\0 & -a \end{array}\right)$. 
One computes $\dim H^1(G_p,U^1) =2$ and
from Table $3$  of \cite{R2} we have $\dim \mcl_p=1$.
As $\mcl_p \subset H^1(G_p,U^1)$ any
element of $H^1(G_p,U^1)$ not in $\mcl_p$ is
 ramified on both diagonals. 
A linear combination
of this class and the ramified twist will be trivial on the lower right entry and thus
ordinary. Of course the unramified twist is  in $\mclt_p$ so $\dim \mclt_p=3$.
That the ordinary ring is smooth
in $3$ variables follows from the argument in the proof of 1).

3) That $\dim \mclt_p=3$ follows from Proposition $13$ of \cite{R4}. 
One easily sees that  $\dim H^2(G_p,U)=0$ and $\dim H^1(G_p,U)=4$ so the upper triangular defomation
ring is smooth in $4$ variables. As before its ordinary quotient involves one relation that forces
the lower right entry to be unramified which again 
implies the ordinary ring is smooth in
$3$ variables.

4) 
Then the short exact sequence $0 \to U^1 \to U \to U/U^1 \to 0$ and routine Galois
cohomology computations give that $H^1(G_p,U^1) \to H^1(G_p,U)$ is an injection
of a $2$-dimensional space into a $4$-dimensional space. 
The cokernel is spanned by the images of the ramified and unramified twists.
There are two independent
extensions of $1$ by $\bar \eps$
so at least one dimension of $H^1(G_p,U^1)$ is ordinary. 

If all of $H^1(G_p,U^1) \subset H^1(G_p,U)$ is ordinary then, taking
into account the unramified twist, $\dim \mclt_p \geq 3$.
The only way $ \dim \mclt_p=4=\dim H^1(G_p,U)$ is if  the ramified twist belongs to $\mclt_p$,
which we know does not happen. Thus $\dim \mclt_p=3$ in this case.

If only one dimension of $H^1(G_p,U^1)$ is ordinary
(this is what actually happens, but proving it is messier than the weaker argument used here)
then the 
same proof as in 2) implies
$\dim \mclt_p=3$.

Since $\dim \mclt_p = 3$ in all cases and the upper triangular ring is smooth in
$4$ variables, the  ordinary ring is smooth in
$3$ variables.


5) This case is a bit more involved as the ordinary ring is not smooth. 
First note that $\mclt_p$ contains the two unramified twists, the ramified twist of $\epsb$ and
the {\it two} extensions of $1$ by $\epsb$ and so $\dim \mclt_p=5$. We will replace it by a $4$-dimensional
subspace.

Let $D$ be the maximal pro-$p$ abelian extension of $L$, the the composite of
$\ZZ_p$-unramified extension of $\QQ_p$ and
$  \QQ_p( \mu_{p^{\infty}})  $. 
Then in this case, any ordinary deformation of $\rhob$ has meta-abelian image
and factors through
$Gal(D/\QQ_p)$. By Kummer theory $D$ is generated by $p$-power roots
of elements of $L$. Let 
$K \subset D$ be the subfield generated by $p$-power  roots
of {\it units} of $L$. 
\vskip1em
\hspace{5cm}
\begin{tikzpicture}[description/.style={fill=white,inner sep=2pt}]
\matrix (m) [matrix of math nodes, row sep=3em,
column sep=2.5em, text height=1.5ex, text depth=0.25ex]
{ & D & \\
& K & \\
& L
& \\
\QQ^{nr}_p & & \QQ_p(\mu_{p^{\infty}}) \\ 
& \QQ_p & \\};
\path (m-1-2) edge  (m-2-2);
\path (m-2-2) edge  (m-3-2);
\path(m-3-2) edge   (m-4-1);
\path(m-3-2) edge   (m-4-3);
\path(m-4-1) edge   (m-5-2);
\path(m-4-3) edge   (m-5-2);
\end{tikzpicture}
\newline\noindent
Let $\psi_1,\psi_2 :G_p \to \ZZ^*_p$ be unramified characters each congruent to $1$ mod $p$.

We will consider a series of ring homomorphisms
$$ R^{ord} \twoheadrightarrow R^{ord,unit}\twoheadrightarrow   R^{ord,unit}_k \twoheadrightarrow 
 R^{ord,unit,\psi_2}_k 
 \twoheadrightarrow R^{ord,unit,\psi_1,\psi_2}_k$$
where the superscript `unit'  indicates the quotient of the ordinary ring whose
deformation  factors
through $Gal(K/\QQ_p)$ and the presence of the unramified character $\psi_i$
as a superscript indicates that we are fixing $\psi_i$ in the $ii$ spot on the diagonal.
The subscript $k$ indicates the weight. 
So $R^{ord,unit,\psi_1,\psi_2}_k$
is the deformation ring parametrizing deformations of $\rhob \mid_{G_p}$ that factor through
$Gal(K/\QQ_p)$ and are of the form
$\left(\begin{array}{cc} \epsilon^{k-1}{\tilde{\epsilon}}^{2-k} \psi_1 & * \\ 0 & \psi_2 \end{array}\right)$.
For instance, the ring $R^{ord,unit}_k$ puts no restrictions on the unramified diagonal characters.

Consider $R^{ord,unit,\psi_1,\psi_2}_k$.
The tangent space for this ring is $1$-dimensional as follows. 
No twists by characters on the diagonal are allowed and
the 
 tr\`{e}s ramifi\'{e}e
extension of $1$ by $\bar \eps$ is not allowed either. 
Only the peu ramifi\'{e}e
extension of $1$ by $\bar \eps$ is allowed.
Thus the corresponding deformation ring is $\ZZ_p[[U]]/I_1$. If $I_1$ contains a nonzero element $g(U)$,
then by the Weierstrass preparation theorem we can assume $g(U)=p^rh(U)$ 
where $h(U)$ is a distinguished polynomial,
or $h(U) \equiv 1$ or  $0$. 
But Lemma~\ref{units} gives the existence of nonsplit characteristic zero deformations. 
Conjugating
these by $\left(\begin{array}{cc} p^m & 0\\0 & 1\end{array}\right)$ gives {\em different
deformations} of $\rhob$  for each $m$ (though of course these representations are all isomorphic), so our ring has infinitely many characteristic
zero points and $h(U)$ would have infinitely many roots, 
a contradiction. Thus $h(U) \equiv 0$, $I_1$ is trivial and 
$R^{ord,unit,\psi_1,\psi_2}_k \simeq \ZZ_p[[U]]$.

The ring $R^{ord,unit,\psi_2}_k$ has two dimensional tangent space (the unramified twist of $\bar \eps$ is now allowed)
and so $R^{ord,unit,\psi_2}_k \simeq \ZZ_p[[U_1,U_2]]/I_2$. But for each choice of $\psi_1$, we see this ring
has a different quotient isomorphic to $\ZZ_p[[U]]$.  If $I_2 \neq (0)$ 
Krull's principal ideal theorem (see Corollary $11.18$ of \cite{1}) implies
$R^{ord,unit,\psi_2}_k$ has Krull dimension at most $2$. Then
a Noetherian ring of Krull dimension at most
$2$ has infinitely many components of Krull dimension $2$, a contradiction. Thus $I_2=0$.
Similarly $R^{ord,unit,}_k$ has three dimensional tangent space 
and so $R^{ord,unit}_k \simeq \ZZ_p[[U_1,U_2,U_3]]/I_3$. But for each choice of $\psi_2$, we see this ring
has a different quotient isomorphic to $\ZZ_p[[U_1,U_2]]$.  If $I_3 \neq (0)$ the same Krull dimension
argument as above
gives a contradiction.
Thus $I_3=0$.
Finally, $R^{ord,unit}$ has four dimensional tangent space as only 
the tr\`{e}s ramifi\'{e}e extension of $1$ by $\bar \eps$ is not allowed.
We have, for each $k \geq 2$, $$R^{ord,unit}  \twoheadrightarrow R^{ord,unit}_k \simeq \ZZ_p[[U_1,U_2,U_3]]$$ 
 so, arguing as before,
$R^{ord,unit} \simeq \ZZ_p[[U_1,U_2,U_3,U_4]]$.

Using Lemma~\ref{units} we see 
it remains to show that weight $2$ flat deformations of $\rhob$ factor through $R^{ord,unit}$
in the $\psi_1=\psi_2=\psi$ case. 
This follows immediately from Lemma~\ref{fontaine}.

\end{proof}

\begin{prop} \label{lv} For  $v \neq p$, 
$\dim \mclt_v =\dim H^0(G_v,Ad\rhob)$ and
$\dim \mclt_p = \dim H^0(G_p,Ad\rhob)+2$
\end{prop}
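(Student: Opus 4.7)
The plan is to treat the two cases $v \neq p$ and $v = p$ separately, with the first handled by a direct sum computation and the second by inspection of the cases listed in Proposition~\ref{vequalsp}.

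For $v \neq p$, I would begin by observing that since $p$ is odd, the trace and the scalar embedding split the full adjoint as a $G_v$-module:
\[
Ad\rhob \;\simeq\; Ad^0\rhob \,\oplus\, \FF_{p^f}.
\]
Taking $G_v$-invariants is exact on this direct sum, so
\[
\dim H^0(G_v, Ad\rhob) \;=\; \dim H^0(G_v, \ad) \,+\, 1,
\]
since $\FF_{p^f}$ has trivial $G_v$-action. On the Selmer side, the definition of $\mclt_v$ in Definition~\ref{tildes}(1) is precisely the internal direct sum $\mcl_v \oplus H^1_{nr}(G_v, \FF_{p^f})$ under the corresponding decomposition of $H^1(G_v, Ad\rhob)$. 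The unramified cohomology $H^1_{nr}(G_v, \FF_{p^f}) = H^1(G_v/I_v, \FF_{p^f})$ is one-dimensional, while $\dim \mcl_v = \dim H^0(G_v, \ad)$ is supplied by Fact~\ref{h0ad0} for $v \in S_0 \setminus \{p\}$ and by the Balancedness Assumption for $v \in S \setminus S_0$. Adding these yields $\dim \mclt_v = \dim H^0(G_v, \ad) + 1 = \dim H^0(G_v, Ad\rhob)$, as required.

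For $v = p$, the equality $\dim \mclt_p = \dim H^0(G_p, Ad\rhob) + 2$ I would verify by direct inspection of the five cases enumerated in Proposition~\ref{vequalsp}: cases 1 and 5 (after the redefinition in case 5 to the four-dimensional subspace coming from the smooth quotient) give $4 = 2 + 2$, while cases 2, 3, and 4 give $3 = 1 + 2$. Since Proposition~\ref{vequalsp} already records both $\dim \mclt_p$ and $\dim H^0(G_p, Ad\rhob)$ in each case, no further computation is necessary.

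There is no serious obstacle: the entire content has been packaged into Fact~\ref{h0ad0}, the Balancedness Assumption, and Proposition~\ref{vequalsp}. The only point that requires a moment of thought is the decomposition $Ad\rhob = \ad \oplus \FF_{p^f}$ at the auxiliary primes, which depends on $p$ being odd, and the corresponding compatibility with the direct-sum definition of $\mclt_v$ in Definition~\ref{tildes}(1).
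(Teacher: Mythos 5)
Your proof is correct and follows the same approach as the paper's: for $v \neq p$ you use $\dim \mcl_v = \dim H^0(G_v,\ad)$ (from Fact~\ref{h0ad0} and the Balancedness Assumption) together with the observation that passing from $\ad$ to $Ad\rhob$ adds exactly one to both $\dim H^0(G_v,\cdot)$ and $\dim \mclt_v$, and for $v = p$ you read off the dimensions from Proposition~\ref{vequalsp}. The paper's proof asserts the two dimension shifts without further comment, whereas you supply the supporting details (the $p$-odd splitting $Ad\rhob \simeq \ad \oplus \FF_{p^f}$ and the one-dimensionality of $H^1_{nr}(G_v,\FF_{p^f})$), which is exactly what is implicit there.
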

\begin{proof} 
For $v\neq p$ it is known that $\dim \mcl_v = \dim H^0(G_v,\ad)$. 
As we switch from $\ad$ to $Ad\rhob$
$$\dim H^0(G_v,Ad\rhob) -\dim H^0(G_v,\ad) =1=\dim \mclt_v -\dim \mcl_v.$$ 

The $v=p$ result follows from Fact~\ref{vequalsp}.
\end{proof}

Propositions~\ref{wiles} and~\ref{lv} give, taking into account $v=\infty$, 
\begin{cor} \label{prop16} 
$\dim H^1_{\mclt}(G_S,Ad\rhob) - \dim H^1_{{\mclt}^{\perp}}(G_S,Ad\rhob^*)=1$.
\end{cor}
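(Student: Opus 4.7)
The plan is to deduce the corollary as a direct consequence of Wiles' formula (Proposition~\ref{wiles}) applied to the module $M = Ad\rhob$ with the local conditions $\mclt_v$ from Definition~\ref{tildes}, with $R=S$. The work is essentially bookkeeping: compute the global $H^0$ terms, then sum the local contributions $\dim \mclt_v - \dim H^0(G_v, Ad\rhob)$ over all $v \in S$ (including $\infty$).

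First I would handle the global terms. Since $\rhob$ has full image, its centralizer in $M_2(k)$ is the scalars, so $\dim H^0(\gal, Ad\rhob)=1$. For the Cartier dual, $Ad\rhob^* \simeq Ad\rhob \otimes \epsb$, and the $\gal$-invariants again must be scalars, but now twisted by the cyclotomic character; since $p$ is odd and $\epsb$ is nontrivial, $\dim H^0(\gal, Ad\rhob^*)=0$. So the global terms contribute $+1$.

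Next I would handle the finite local contributions. For each $v \in S \setminus \{p,\infty\}$, Proposition~\ref{lv} gives exactly $\dim \mclt_v = \dim H^0(G_v, Ad\rhob)$, so these primes contribute zero. For $v=p$, Proposition~\ref{lv} gives $\dim \mclt_p = \dim H^0(G_p, Ad\rhob)+2$, contributing $+2$.

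Finally I would handle the archimedean place, which is where the net answer $1$ (rather than $3$) actually comes from. Since $p$ is odd and $G_\infty$ has order $2$, cohomology in positive degree vanishes, so $\mclt_\infty = H^1(G_\infty, Ad\rhob)=0$. Because $\rhob$ is odd, complex conjugation is conjugate to $\mathrm{diag}(1,-1)$, and its action on $Ad\rhob$ (by conjugation) has a $2$-dimensional fixed subspace (the diagonal matrices). Hence the $v=\infty$ contribution is $0 - 2 = -2$. Summing everything, $1 + 0 + 2 - 2 = 1$, as claimed. There is no serious obstacle; the only care needed is the sign at infinity and the correct use of full image / oddness to pin down the two global $H^0$ dimensions.
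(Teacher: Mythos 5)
Your proposal is correct and follows the same route the paper uses: apply Wiles' formula (Proposition~\ref{wiles}) with local conditions $\mclt_v$, plug in Proposition~\ref{lv} for the finite places, and account for $v=\infty$ explicitly. The detailed bookkeeping you give — global terms $1-0$, zero contribution at each finite $v\neq p$, $+2$ at $p$, and $-2$ at $\infty$ since $\mclt_\infty\subset H^1(G_\infty,Ad\rhob)=0$ while $\dim H^0(G_\infty,Ad\rhob)=2$ by oddness — is exactly what the paper's terse "Propositions~\ref{wiles} and~\ref{lv} give, taking into account $v=\infty$" is summarizing.
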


\subsection{Local deformation rings at nice primes}

Finally, we recall the notion of {\it nice} primes for a representation. 
The definition given below is a blend of that given in 
\cite{R3} (see  \S  $2$ and Propostion $2.2$) and that of \cite{T} that is suited for our purposes.
The latter reference handles the case $p=3$.

\begin{Def} \label{np}
Let $\rhob:\gal \to GL_2(\FF_{p^f})$ odd, ordinary, full and weight $2$ trivial nebentype
be  given. 
Let $R$ be a finite cardinality complete local Noetherian ring with
residue field $\FF_{p^f}$ and
let $\rho_R$ be a
continuous homomorphism
$\gal \to GL_2(R)$ lifting $\rhob$.
A prime $q \not \equiv  1 $ mod $p$ is called {\bf nice}
if $\rhob$ is unramified at $q$ and if the eigenvalues of $\rhob(Fr_q)$ are $q$ and $1$.
The prime $q$ is called {\bf $\rho_R$-nice} if $\rho_R$ is unramified at $q$, 
$\rho_R(Fr_q)$ has eigenvalues  $q$ and $1$ and order prime to $p$. 
\end{Def}
The local at $q$
deformation ring has a smooth quotient whose points
${\mathcal C}_q$ consist of Steinberg deformations. There is an
induced subspace $\mcl_q \subset H^1(G_q,\ad)$.
When $ q \not \equiv  \pm 1$ mod $p$, there is a single family of Steinberg deformations.
When $q \equiv -1$ mod $p$, a necessity in the $p=3$ case, there are two families
of Steinberg deformations. Taylor simply chooses a family and defines 
${\mathcal L}_q$ and ${\mathcal C}_q$ accordingly. See \cite{T} for all of this.


\begin{prop}\label{np2} Let 
 $\rho_R$ be odd, ordinary,  weight $2$ with full reduction $\rhob$. 
Then $\rho_R$-nice primes exist and for any nice prime $q$,
$\dim \mcl_q = \dim H^0(G_q,\ad)=1$ and a smooth quotient of the deformation
ring exists with points ${\mathcal C}_q$.
Proposition~\ref{lv} applies for nice primes so $\dim \mclt_q = 2 =\dim H^0(G_q,Ad\rhob)$
and ${\tilde{\mathcal C}}_q$ consists of all unramified twists of points of ${\mathcal C}_q$.
\end{prop}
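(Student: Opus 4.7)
The plan is to first establish existence of $\rho_R$-nice primes by a Chebotarev argument using the full-image hypothesis, then to read off the required dimensions by a small Galois cohomology computation at a nice prime, and finally to invoke the Steinberg smooth quotient already constructed in \cite{R3} and \cite{T}.

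For existence, I would look for an element $\sigma \in G_\Q$ whose image under $\rho_R$ is conjugate in $GL_2(R)$ to $\mathrm{diag}(\lambda,1)$ with $\lambda \in R^*$ of prime-to-$p$ order and $\lambda \not\equiv 1 \pmod \pi$. Since $\det \rho_R = \eps$, one automatically has $\eps(\sigma) = \lambda$, so nothing extra needs to be arranged about the cyclotomic character. To produce such a $\sigma$, first pick any $\sigma_0$ with $\eps(\sigma_0)$ equal to a Teichm\"uller lift $\lambda \in R^*$ of a non-trivial element of $k^*$; then, using $SL_2(R) \subset \rho_R(G_\Q)$, modify $\sigma_0$ by a suitable element so that $\rho_R(\sigma)$ becomes $\mathrm{diag}(\lambda,1)$. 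Chebotarev applied to the finite extension of $\Q$ cut out by $\rho_R$ then yields infinitely many primes $q$ with $Fr_q$ conjugate to $\sigma$. For such $q$, the identity $q \equiv \eps(Fr_q) = \lambda \pmod \pi$ forces $q \not\equiv 1 \pmod p$, while $\rho_R(Fr_q)$ is conjugate to $\mathrm{diag}(\lambda,1)$ and so has eigenvalues $q,1$ and order prime to $p$. Thus $q$ is $\rho_R$-nice (and in particular nice) in the sense of Definition~\ref{np}.

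Next, for any nice $q$, the representation $\rhob$ is unramified at $q$, so $G_q$ acts on $\ad$ through $\langle Fr_q \rangle$ with eigenvalues $q,1,q^{-1}$ on the three standard eigenlines of trace-zero matrices. The assumption $q \not\equiv 1 \pmod p$ makes only the diagonal eigenline invariant, giving $\dim H^0(G_q,\ad)=1$; Tate local duality, together with the self-duality of $\ad$ under the trace form, gives $\dim H^2(G_q,\ad) = \dim H^0(G_q,\ad(1)) = 1$; and the local Euler characteristic formula at $\ell = q \neq p$ then yields $\dim H^1(G_q,\ad)=2$. The Steinberg family of characteristic-zero deformations, set up in \cite{R3} (see Proposition $2.2$) and in \cite{T}, provides a smooth quotient of the local deformation ring with points ${\mathcal C}_q$ and a $1$-dimensional tangent space $\mcl_q \subset H^1(G_q,\ad)$, matching $\dim H^0(G_q,\ad)$. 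In the awkward case $q \equiv -1 \pmod p$ (needed when $p=3$) one selects one of the two Steinberg families, as in \cite{T}.

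The full-adjoint statements then follow immediately from Proposition~\ref{lv} and Definition~\ref{tildes}: $\dim \mclt_q = \dim \mcl_q + \dim H^1_{nr}(G_q,\FF_{p^f}) = 1+1 = 2 = \dim H^0(G_q,Ad\rhob)$, and $\tilde{{\mathcal C}}_q$ is by construction the family of unramified twists of ${\mathcal C}_q$. The hardest part will be the Chebotarev step: arranging the eigenvalue shape, the prime-to-$p$ order of $\rho_R(Fr_q)$, and the congruence $q \not\equiv 1 \pmod p$ simultaneously is exactly where both the full-image and the $\det \rho_R = \eps$ hypotheses enter in an essential way.
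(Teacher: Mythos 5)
Your existence argument has a gap at the step ``using $SL_2(R) \subset \rho_R(G_\Q)$, modify $\sigma_0$ \dots''. The hypothesis of the proposition is only that the \emph{reduction} $\rhob$ has full image (i.e.\ $SL_2(\FF_{p^f}) \subset \rhob(G_\Q)$); nothing is assumed about fullness of $\rho_R$ itself for a general finite local ring $R$, and one cannot deduce it from fullness of $\rhob$ --- Lemma~\ref{Boston} needs fullness mod $m_R^2$, and the split surjection $GL_2(\Oc/(\pi^2)) \to GL_2(\Oc/(\pi))$ shows that fullness mod $m_R$ is genuinely weaker. So the element $A^{-1}\mathrm{diag}(\lambda,1) \in SL_2(R)$ you want to absorb need not lie in $\rho_R(G_\Q)$, and you cannot force $\rho_R(\sigma) = \mathrm{diag}(\lambda,1)$ on the nose.

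The paper's proof circumvents this precisely: choose $a \in \FF_p$, $a \neq 1$, obtain $\mathrm{diag}(a,1) \in \rhob(G_\Q)$ from fullness of $\rhob$ together with surjectivity of $\bar\eps$, lift this element \emph{arbitrarily} to some $M \in \rho_R(G_\Q)$ (which exists since $\rho_R(G_\Q) \twoheadrightarrow \rhob(G_\Q)$), and then replace $M$ by $M^{p^r}$ for $r$ large. Because the reduction of $M$ has distinct eigenvalues $a, 1$, Hensel's lemma lets one diagonalize $M$ over $R$, and raising to a high $p$-power kills the $1$-units in each eigenvalue, producing an element of prime-to-$p$ order with eigenvalues the Teichm\"uller lifts $[a]$ and $1$. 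Then any $q$ with $Fr_q$ in this conjugacy class is $\rho_R$-nice. Your argument can be repaired by inserting exactly this lift-then-raise-to-a-$p$-power step after obtaining $M$, in place of the unavailable modification by an element of $SL_2(R)$.

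Two minor points: the Teichm\"uller element $\lambda$ must lift a nontrivial element of $\FF_p^* \subset k^*$, not merely of $k^*$, since $\eps$ takes values in the image of $\Z_p^*$; and the cohomological computation and invocation of the Steinberg quotient are fine and match what the paper dismisses as ``standard.''
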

\begin{proof}
We are given that $\rhob$ is full and det$ \rhob =\bar \eps$ so for $a \in \FF_p$, $a\neq 1$, choose
$\left(\begin{array}{cc} a & 0 \\0 &1\end{array}\right) \in \mbox{ image} (\rhob)$. 
Any prime $q$ with Frobenius in the conjugacy class of this matrix will be nice. 
After lifting this matrix to an element of image$(\rho_R)$ and raising to a large power of $p$ (say $p^r$),
the new matrix will have tame order and eigenvalues $\{a^{p^r},1\}$ which are distinct.
Any prime 
with Frobenius 
in the conjugacy class of this element will be $\rho_R$-nice.

The cohomological results are standard and we do not give their proofs.
\end{proof}

\section{Ordinary  smooth deformation rings and  $\rho_n$ }
For any finite set of primes $T \supset S$ with $T \backslash S$ consisting of only nice primes,
we  have an ordinary arbitrary weight deformation ring denoted $R^{ord,T-new}$ and its weight
$2$ quotient $R^{ord,T-new}_2$. When restricted to $G_v$  the points of these rings
lie, respectively,  in ${\tilde{\mathcal C}}_v$ and ${\mathcal C}_v$. We remark that in previous
papers we used the notation  $T\backslash S_0$-new to indicate that all nice primes
were in the level of the modular form. Since it is less cumbersome, we use 
$T$-new here instead. 
Results toward Theorem~\ref{easy} are proved in \cite{CP}.
\begin{theorem} \label{easy}
Suppose  $\rho_n:\gal \to GL_2(\mco/(\pi^n))$ is odd,  ordinary, weight $2$, modular, has full image and determinant $\eps$,  and
is  balanced.
Suppose $\rho_n \mid_{G_v} \in  {\mathcal C}_v$ for all $v \in S$.
Then there  exists a finite set of primes $T\supseteq S$ such that the universal ordinary
 `new at $T$' ring  $R^{ord,T-new} \simeq\wfq[[U]]$ and there are surjections
$$R^{ord,T-new} \twoheadrightarrow R^{ord,T-new}_2  \twoheadrightarrow \mco/(\pi^n)$$ 
from this ring to its weight $2$ quotient and then to $\mco/(\pi^n)$ inducing $\rho_n.$
\end{theorem}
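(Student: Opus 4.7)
The plan is to iteratively augment $S$ by nice primes that kill the dual Selmer group while preserving $\rho_n$ as a valid specialization of the resulting universal ring. Starting from $T=S$, Corollary~\ref{prop16} gives
$$\dim H^1_{\mclt}(G_S,\adf) \;=\; \dim H^1_{\mclt^{\perp}}(G_S,\adf^*) + 1.$$
At each nice prime $q$, Propositions~\ref{lv} and~\ref{np2} give $\dim \mclt_q = \dim H^0(G_q,\adf) = 2$, so Wiles' formula (Proposition~\ref{wiles}) shows that enlarging the ramification set by $q$ preserves the Selmer-minus-dual-Selmer difference. Hence it suffices to find nice primes whose addition drives the dual Selmer to zero.

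The key technical step is the Ramakrishna--Taylor killing construction. For each nonzero class $\phi$ in the current dual Selmer group, I would use Chebotarev density in an appropriate compositum of fields (cut out by $\rho_n$, by $\phi$, and by the $p$-power cyclotomic tower) to produce a prime $q$ whose Frobenius lies in a prescribed conjugacy class so that $q$ is $\rho_n$-nice---unramified in $\rho_n$ with $\rho_n(Fr_q)$ of tame order and eigenvalues $q,1$, as guaranteed by Proposition~\ref{np2}---and simultaneously $\mathrm{res}_q \phi \notin \mclt_q^{\perp}$. Adding such a $q$ strictly lowers the dual Selmer dimension, and repeating finitely many times yields a set $T \supseteq S$ with $H^1_{\mclt^{\perp}}(G_T,\adf^*) = 0$ and $\dim H^1_{\mclt}(G_T,\adf) = 1$.

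With this $T$, the ring $R^{ord,T-new}$ has one-dimensional tangent space, so it is a quotient of $\wfq[[U]]$. Vanishing of the dual Selmer makes the deformation problem unobstructed via Poitou--Tate global duality, forcing $R^{ord,T-new} \simeq \wfq[[U]]$. Alternatively, as the introduction notes, the nontriviality and $\Zp[[T]]$-flatness of the Hida Hecke algebra $\T^{ord,T-new}$---ensured by the level-raising results of \cite{DT} combined with Hida's ordinary structure theory---force Krull dimension at least two on any quotient of $\wfq[[U]]$ mapping onto it, closing the argument.

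Finally, $\rho_n$ itself defines a surjective homomorphism from $R^{ord,T-new}$: at every $v \in S$ the local condition $\rho_n|_{G_v} \in \mathcal{C}_v \subset \tilde{\mathcal{C}}_v$ holds by hypothesis, and at every newly added prime $q \in T\setminus S$ it is automatic since $\rho_n$ is unramified there. This gives the surjection $R^{ord,T-new} \twoheadrightarrow \mco/(\pi^n)$, and since $\rho_n|_{I_p}$ has the prescribed weight-$2$ shape, the map factors through $R^{ord,T-new}_2$. The main obstacle I anticipate is the Chebotarev step: simultaneously arranging $\rho_n$-niceness (a Frobenius condition over the field cut out by $\rho_n$) and nontriviality of $\mathrm{res}_q\phi$ modulo $\mclt_q^{\perp}$ (a condition on the splitting field of $\phi$ together with the cyclotomic tower) requires these extensions to intersect controllably---precisely where the full-image and balancedness hypotheses must be used decisively.
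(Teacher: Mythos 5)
Your overall architecture matches the paper's: kill the dual Selmer group by adding nice primes that are also $\rho_n$-nice so that $\rho_n$ survives as a point of the resulting one-variable power-series deformation ring, then invoke modularity of $\rhob$, level raising (\cite{DT}) and Hida theory to see $R^{ord,T-new}\simeq\wfq[[U]]\simeq\T^{ord,T-new}$. However, there is a genuine gap in the heart of your killing step.

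You claim that choosing $q$ with $\mathrm{res}_q\phi\notin\mclt_q^{\perp}$ (together with $\rho_n$-niceness) ``strictly lowers the dual Selmer dimension.'' That is false. Because nice primes $q$ have $\dim\mclt_q=\dim H^0(G_q,Ad\rhob)$, Wiles' formula (Proposition~\ref{wiles}) shows that passing from $Z$ to $Z\cup\{q\}$ leaves the difference $\dim H^1_{\mclt}-\dim H^1_{\mclt^{\perp}}$ unchanged; in particular the dual Selmer can drop by one only if the Selmer group drops by one as well. Proposition~\ref{nice}, part 4), of the paper makes this precise: if $\phi\mid_{G_q}\neq 0$ but \emph{every} Selmer class restricts trivially at $q$, the dual Selmer dimension is unchanged. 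The paper's Lemma~\ref{chebs} therefore imposes, in addition to $\phi_i\mid_{G_{q_i}}\neq 0$ and $\rho_n$-niceness, the condition that a chosen \emph{Selmer} class $h_i$ (from a basis $\{h_1,\dots,h_s,f\}$, with $f$ the class coming from $\rho_n\bmod\pi^2$) satisfies $h_{i,\ad}\mid_{G_{q_i}}\neq 0$ while $h_{i,sc}\mid_{G_{q_i}}=0$, and that all other $h_j,\phi_j$ ($j\neq i$) restrict trivially. Only then does Proposition~\ref{nicefull}, part 2), deliver a simultaneous drop of Selmer and dual Selmer by one. You need these extra conditions, and you also need the $\rho_n$-niceness to force $f\mid_{G_{q_i}}=0$ so that $f$ (not one of the $h_i$) is what survives to span the one-dimensional Selmer and tangent space in the end. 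Without the Selmer-side restriction your iteration may stall.

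A secondary, but real, concern is the Chebotarev-independence step. You correctly flag it as the locus of difficulty, but the paper does not merely invoke ``an appropriate compositum''; it has to prove the conditions are independent over $\QQ(\rhob)$, and the delicate point is that the field cut out by $\rho_n$ (a tower of $Ad\rhob$- and $\ad$-type extensions) could a priori intersect nontrivially the extensions cut out by the $h_i$. The paper rules this out using Proposition~\ref{sub} and the group-theoretic Lemma~\ref{shekharslemma} (which is where fullness of $\rho_n$ rather than just $\rhob$ is needed, and which in turn relies on Boston's Lemma~\ref{Boston} and Lemma~\ref{p35}). Your proposal gestures at this but does not supply the mechanism; in the paper the conclusion would be false without this argument, since otherwise $f$ could fall into the span of the $h_i$.

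The final paragraph of your proposal also needs a small correction: at $q\in T\setminus S$ the deformation condition $\mathcal C_q$ is the Steinberg condition (with $\tilde{\mathcal C}_q$ its unramified twists), so the fact that $\rho_n\mid_{G_q}$ lies in $\mathcal C_q$ is not automatic from unramifiedness alone but comes from $\rho_n$-niceness, which makes $\rho_n(\mathrm{Frob}_q)$ an unramified Steinberg point $\left(\begin{smallmatrix} q & 0\\ 0&1\end{smallmatrix}\right)$.
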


It is {\bf not} a consequence of Theorem~\ref{easy} that $\rho_n$ lifts to 
a $T$-$new$ weight $2$ characteristic zero representation. 
For instance, if $n=3$ and $\mco=\ZZ_p[\sqrt{p}]$ it is possible that 
$$R^{ord,T-new}_2\simeq \ZZ_p[[U]]/\left((U-p)(U-2p)(U-3p)\right)$$ and
$\rho_{3}$ arises from $U\mapsto \sqrt{p}$.
The smoothness of 
$R^{ord,T-new}$
immediately implies the existence of characteristic zero lifts, but these lifts may not have
classical weight, let alone weight $2$. Theorem~\ref{hard}
addresses this.

\subsection{Group theoretic lemmas}

We need the following lemma of Boston (see \cite{BO}) and Lemma~\ref{p35} for Lemma~\ref{shekharslemma}.
\begin{lemma} \label{Boston} (Boston)
Let $p \geq 3$. Let $R$ be a complete local Noetherian ring with residue
characteristic $p$. Let $\rho:G \to GL_2(R)$ be a representation and assume
the image of the projection $$\rho_2:G \to GL_2(R) \to GL_2\left(R/m^2_R\right)$$ is full, that is it
 contains $SL_2\left(R/m^2_R\right)$. Then the image of $\rho$ contains $SL_2(R)$.
\end{lemma}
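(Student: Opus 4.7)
The plan is to induct on $n \geq 2$, showing $H_n := \rho(G) \bmod m_R^n \supseteq SL_2(R/m_R^n)$. Since $R$ is complete local Noetherian with residue characteristic $p$, each $R/m_R^n$ is Artinian local and $SL_2(R) = \varprojlim SL_2(R/m_R^n)$ as a profinite group; combined with the standing continuity hypothesis on $\rho$ (so that $\rho(G)$ is closed in $SL_2(R)$), this reduces the task to the statement for each $n$. The base case $n=2$ is exactly the hypothesis.

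For the inductive step, assume $H_n \supseteq SL_2(R/m_R^n)$ and consider the short exact sequence
$$1 \longrightarrow U_n \longrightarrow SL_2(R/m_R^{n+1}) \longrightarrow SL_2(R/m_R^n) \longrightarrow 1,$$
whose abelian kernel $U_n$ is canonically identified, via $I+X \mapsto X$, with $\mathfrak{sl}_2(k) \otimes_k (m_R^n/m_R^{n+1})$ (where $k := R/m_R$); the conjugation action of $SL_2(R/m_R^{n+1})$ on $U_n$ factors through $SL_2(k)$ and agrees with the adjoint action on $\mathfrak{sl}_2(k)$. Put $V := H_{n+1} \cap U_n$; normality of $U_n$ together with the inductive hypothesis makes $V$ an $SL_2(k)$-submodule of $U_n$. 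The key representation-theoretic input is that for odd $p$, $\mathfrak{sl}_2(k)$ is absolutely irreducible as $SL_2(k)$-module under the adjoint action---it is $\mathrm{Sym}^2$ of the standard representation, which is irreducible whenever $2 < p$; this fails at $p=2$ because the scalar matrices then have trace zero. Hence by Schur, $V = \mathfrak{sl}_2(k) \otimes_k W$ for a unique $k$-subspace $W \subseteq m_R^n/m_R^{n+1}$, and it remains to prove $W$ is the full subspace.

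This last step is where the $n=2$ hypothesis is used constructively. For each $t \in R/m_R^2$ one picks a lift $g_t \in G$ with $\rho(g_t) \equiv \begin{pmatrix} 1 & t \\ 0 & 1 \end{pmatrix} \pmod{m_R^2}$, and similarly for lower-unipotents. The plan is to take suitable $p$-th powers---leveraging the identity $\begin{pmatrix} 1 & t \\ 0 & 1 \end{pmatrix}^{p^j} = \begin{pmatrix} 1 & p^j t \\ 0 & 1 \end{pmatrix}$ and carefully expanding $\rho(g_t)^{p^j}$ in the $m_R$-adic filtration---together with iterated commutators with lower-unipotent lifts, to produce elements of $\rho(G) \cap U_n$ whose leading classes in $m_R^n/m_R^{n+1}$ exhaust the space; conjugating by the already-available $SL_2(k)$ action then fills out all of $\mathfrak{sl}_2(k) \otimes W$. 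The main obstacle is precisely this filtration bookkeeping: the representation theory cleanly forces the shape $V = \mathfrak{sl}_2(k) \otimes W$, but it gives no a priori control on $W$, and exhibiting $W$ as the full subspace requires case analysis on the structure of $R$ (for instance, on the ramification index of $p$) to track which depths in $m_R^n/m_R^{n+1}$ are reached by $p$-th powers of $m_R^2$-level unipotents. The assumption $p \geq 3$ is essential both for this arithmetic and for the irreducibility of the adjoint action on $\mathfrak{sl}_2(k)$.
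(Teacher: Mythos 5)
The paper does not prove this lemma; it cites Boston's appendix to Mazur--Wiles \cite{BO}, so the comparison is against a reference rather than an in-house argument. Your inductive reduction (profinite limit, base case $n=2$, identifying $U_n \cong \mathfrak{sl}_2(k)\otimes_k m_R^n/m_R^{n+1}$ with the $SL_2(k)$-action, and applying Schur once irreducibility is in hand) is the right skeleton, and the observation that $V$ must have the shape $\mathfrak{sl}_2(k)\otimes W$ is correct. But there are two genuine gaps.

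First, even granting $V = U_n$, you silently assert that $H_{n+1} \supseteq SL_2(R/m_R^{n+1})$ follows from $U_n \subseteq H_{n+1}$ together with $H_{n+1} \twoheadrightarrow H_n \supseteq SL_2(R/m_R^n)$. This is not automatic: given $g \in SL_2(R/m_R^{n+1})$, a lift $h\in H_{n+1}$ of $g \bmod m_R^n$ satisfies only $\det h \equiv 1 \pmod{m_R^n}$, so $h^{-1}g$ lies in the $GL_2$-congruence kernel $W_n$, not in $U_n$, unless one can choose $h$ with $\det h = 1$. Fixing this requires either passing to the derived subgroup $[H_{n+1},H_{n+1}]\subseteq SL_2(R/m_R^{n+1})$ and invoking perfectness of $SL_2(R/m_R^n)$, or an equivalent device. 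This is not cosmetic: perfectness fails for $p=3$ (since $SL_2(\FF_3)^{\mathrm{ab}}\cong\Z/3$), and indeed one can build a subgroup of $GL_2(\Z/27)$ that satisfies the hypothesis mod $9$ but meets $SL_2(\Z/27)$ in an index-$3$ subgroup, by twisting the determinant of each coset by the $\Z/3$-valued character of $SL_2(\Z/9)$ --- so the statement as written is actually false at $p=3$ and Boston's original lemma is for $p\geq 5$. Second, your step exhibiting $W = m_R^n/m_R^{n+1}$ is left as a plan. The $p$-th-power strategy as you describe it is problematic: $\rho(g_t)$ is only pinned down modulo $m_R^n$ by the inductive hypothesis, so $\rho(g_t)^{p^j}$ carries uncontrolled error terms, and moreover $p\,m_R^{n-1}$ need not equal $m_R^n$ (e.g.\ $R=\Z_p[[t]]$), so unipotent $p$-th powers alone cannot exhaust $m_R^n/m_R^{n+1}$. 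The cleaner route is pure commutators: choosing lifts $g_s,g_t$ of $I+sE_{12}$, $I+tE_{21}$ with $s\in m_R^a$, $t\in m_R^b$, $a+b=n$, $a,b\geq 1$, the group commutator is $\equiv I + st(E_{11}-E_{22}) \pmod{m_R^{n+1}}$ with all error terms falling into $m_R^{n+1}$, and since $[\mathfrak{sl}_2,\mathfrak{sl}_2]=\mathfrak{sl}_2$ for $p\geq 3$ and $m_R^a\cdot m_R^b = m_R^n$, these commutators together with torus-unipotent commutators do span $U_n$; this sidesteps the ramification-index case analysis entirely. Alternatively, the Frattini-subgroup argument on the congruence kernel $K_1 = \ker(SL_2(R)\to SL_2(k))$, namely $K_2 \subseteq \overline{[K_1,K_1]} \subseteq \Phi(K_1)$, avoids the level-by-level induction altogether. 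As written, your proposal identifies where the difficulty lies but does not resolve it, and it misses the determinant obstruction at the inductive step.
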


Lemma~\ref{p35} below is well-known, but as we could find no precise reference we include  a proof.
\begin{lemma}\label{p35}
Let $G \subset GL_2(\FF_{p^f})$ be a full subgroup.
If $\FF_{p^f} \neq \FF_5$ then $H^1(G,\ad)=0$. If $\FF_{p^f}=\FF_5$, 
suppose there exists 
$\left(\begin{array}{cc} r&0\\0 & s\end{array}\right) \in G$ with $\displaystyle \frac{r}s \neq \frac{s}r$.
Then $H^1(G,\ad)=0$.
\end{lemma}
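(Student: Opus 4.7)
The plan is to use the inflation--restriction sequence for the normal subgroup $SL_2(\FF_{p^f}) \subseteq G$:
\[
0 \to H^1\bigl(G/SL_2(\FF_{p^f}),\, (\ad)^{SL_2(\FF_{p^f})}\bigr) \to H^1(G, \ad) \to H^1\bigl(SL_2(\FF_{p^f}), \ad\bigr)^{G/SL_2(\FF_{p^f})}.
\]
Because $SL_2(\FF_{p^f})$ is generated by its upper- and lower-triangular elementary unipotents, any element of $\ad$ fixed by the full conjugation action must be a scalar matrix; being traceless with $p$ odd, it must be zero. So the leftmost term vanishes and it suffices to kill the rightmost one.

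For $\FF_{p^f} \neq \FF_5$, I would simply invoke the classical vanishing $H^1(SL_2(\FF_{p^f}), \ad) = 0$, which holds for every odd $p$ outside this one exceptional case. The small case $p=3$ can be checked by hand through the center $\{\pm I\}$ via Hochschild--Serre, writing $\ad$ as $\mathrm{Ind}_{V_4}^{A_4}\chi$ inside $\mathrm{PSL}_2(\FF_3) \cong A_4$ and applying Shapiro together with the fact that $|V_4|$ is prime to $3$.

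The interesting case is $\FF_{p^f} = \FF_5$, where $H^1(SL_2(\FF_5), \ad)$ really is nonzero and the hypothesis on $d := \mathrm{diag}(r,s) \in G$ must be used. I restrict further to the $5$-Sylow $U \subseteq SL_2(\FF_5)$ of upper-triangular unipotents. Since $5 \nmid [SL_2(\FF_5) : U]$ and $\ad$ is $5$-torsion, the transfer argument shows the restriction
\[
H^1\bigl(SL_2(\FF_5), \ad\bigr) \hookrightarrow H^1(U, \ad)
\]
is injective, and because $d$ normalizes both $SL_2(\FF_5)$ and $U$ this injection is equivariant for conjugation by $d$. A direct calculation shows $U$ acts on $\ad$ as a single Jordan block of length $3$, so $H^1(U, \ad) \cong \ad/(u_0-1)\ad$ is one-dimensional, spanned by the class of $f = \bigl(\begin{smallmatrix} 0 & 0 \\ 1 & 0\end{smallmatrix}\bigr)$. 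A short cocycle computation then identifies the $d$-action on this line with the character $(s/r)^2$: one factor of $s/r$ comes from $d f d^{-1} = (s/r) f$, and the other from the relation $d^{-1} u_0 d = u_0^{s/r}$ combined with the cocycle identity $c(u_0^k) = \sum_{i=0}^{k-1} u_0^i c(u_0)$, whose image in the coinvariants is $k\cdot\overline{c(u_0)}$. The hypothesis $r/s \neq s/r$ is precisely the statement $(s/r)^2 \neq 1$ in $\FF_5^*$, so $d$ fixes no nonzero element of $H^1(U, \ad)$, and hence none of $H^1(SL_2(\FF_5), \ad)$, forcing the $G/SL_2(\FF_5)$-invariants to vanish.

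The only substantive computation is the explicit identification of the $d$-action in the $\FF_5$ case; the obstacle is nothing more than bookkeeping, and the point is that the precise hypothesis $(r/s)^2 \neq 1$ matches exactly the character $(s/r)^2$ that appears.
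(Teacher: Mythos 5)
Your proposal is correct, but it follows a genuinely different route from the paper's argument, so let me compare the two.

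The paper restricts from $G$ to the Borel $B$ (using $p \nmid [G:B]$, so restriction is injective), then computes $H^1(B,\ad)=0$ directly via the filtration $U^0 \subset U^1 \subset \ad$ and Hochschild--Serre for $N \lhd B$. The hypothesis on $\mathrm{diag}(r,s)$ enters in computing $\mathrm{Hom}_{B/N}(N,\ad/U^1)$, where the diagonal torus acts on $N$ by $r/s$ and on $\ad/U^1$ by $s/r$; one needs some $a$ with $a^4 \neq 1$, which is automatic for $p^f > 5$, and $\FF_3$ is handled separately by restriction to $SL_2(\FF_3)$ where the vanishing is cited. Your proposal instead runs inflation--restriction for $SL_2(\FF_{p^f}) \lhd G$, kills the inflation term via $\ad^{SL_2}=0$, and then invokes the classical vanishing $H^1(SL_2(\FF_{p^f}),\ad)=0$ for $p^f \neq 5$, reducing the whole problem to the single exceptional case. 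For $\FF_5$ you descend further to the Sylow $5$-subgroup and compute the $d$-action on $H^1(U,\ad) \cong \ad/(u_0-1)\ad$ explicitly, identifying it with $(s/r)^2$ — the two factors of $s/r$ you describe (one from conjugation on the cocycle values, one from $d^{-1}u_0 d = u_0^{s/r}$ and the coinvariants computation) do multiply out correctly, and the hypothesis $(r/s)^2 \neq 1$ is exactly what makes this nontrivial. One minor point: your $\FF_5$ argument doesn't actually need to know $H^1(SL_2(\FF_5),\ad)$ is nonzero — only that the restriction map to $H^1(U,\ad)$ is injective and $d$-equivariant, so any $d$-fixed class upstairs would give one downstairs.

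The trade-off is clear: your route is cleaner conceptually, collapsing the general case into a single citation to the cohomology of finite groups of Lie type and isolating the interesting computation to $\FF_5$; the paper's route is more elementary and self-contained, requiring no black box beyond the $\FF_3$ case (and there only a small, easily verified vanishing), and it exhibits the role of the $(r/s)^2 \neq 1$ hypothesis uniformly inside the Borel filtration. Both are valid. If you wanted to avoid citing the Cline--Parshall--Scott type result, you could graft the paper's Borel argument onto your framework; if you wanted the shortest possible path given that reference, your version is the more economical one. Also note your $p=3$ digression ($\ad \cong \mathrm{Ind}_{V_4}^{A_4}\chi$ and Shapiro) is correct but is really an alternate proof of the same fact the paper gets by citation, and is orthogonal to the main line.
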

\begin{proof} 
That $H^1(SL_2(\FF_3),\ad)=0$ is well-known. See for instance Lemma $1$ of \cite{R5}. Since
$ 3 \nmid [G:SL_2(\FF_3)]$ the restriction map $H^1(G,\ad) \to H^1(SL_2(\FF_3),\ad)$
is injective and the result follows for $\FF_3$.

Let $B$
be the upper triangular Borel subgroup of $G$ and let $N$ be its unipotent subgroup. 
Fullness implies $p \nmid [G:B]$ so again the restriction map
$H^1(G,\ad) \to H^1(B,\ad)$ is injective. We will prove $H^1(B,\ad)=0$.

Let $U^0 \subset U^1 \subset \ad$
where $U^0$ is the space of upper triangular nilpotent  matrices
and $U^1$ is the space of upper triangular trace zero matrices. From the short exact
sequence
$$0 \to U^1 \to \ad \to \ad/U^1 \to 0$$
we easily get
$$\dots \to 0 \to H^1(B,U^1) \to H^1(B,\ad) \to H^1(B,\ad/U^1) \to \dots$$
so it suffices to prove the (nonzero) outside terms are trivial.

Associated to $H^1(B,\ad/U^1)$ we the exact inflation-restriction sequence
$$0\to H^1(B/N, (\ad/U^1)^N) \to H^1(B, \ad/U^1) \to H^1(N,\ad/U^1)^{B/N} \to H^2(B/N,(\ad/U^1)^N).$$
As $p \nmid  |B/N|$ the outside terms are trivial. One easily sees $N$ acts trivially on $\ad/U^1$ 
so $H^1(N,\ad/U^1)^{B/N}=Hom_{B/N}(N,\ad/U^1)$.
Observe $\left(\begin{array}{cc} r&0\\0 & s\end{array}\right) \in B/N$ acts on $N$ by $\displaystyle \frac{r}s$ and on
the $1$-dimensional space $\ad/U^1$ by $\displaystyle \frac{s}r$. As long as there is an element as above
where $\displaystyle \frac{r}s \neq \frac{s}r$, we have $H^1(N,\ad/U^1)^{B/N}=0$ so $H^1(B,\ad/U^1)=0$. 
Choose $r=a$ and $s=a^{-1}$ so we are requiring $a^4 \neq 1$. Such $a$ exist in $\FF_{p^f}$ when 
$p^f\neq 3,5$
The extra hypothesis for $p^f=5$ gives that $H^1(B,\ad/U^1)=0$ in this last case.

For the $H^1(B,U^1)$ term, consider the short exact sequence
$$ 0\to U^0 \to U^1 \to U^1/U^0 \to 0$$ and take its $B$-cohomology to get
$$\dots 0 \to \FF_{p^f} \to H^1(B,U^0) \to H^1(B,U^1) \to H^1(B,U^1/U^0).$$
An easy analysis of the exact inflation-restriction sequences
$$0 \to H^1(B/N,(U^0)^{N})  \to H^1(B,U^0) \to H^1(N,U^0)^{B/N} \to
H^2(B/N, (U^0)^{N})$$
and
$$0 \to H^1(B/N, (U^1/U^0)^{N})  \to H^1(B,U^1/U^0) \to H^1(N,U^1/U^0)^{B/N} \to
H^2(B/N, (U^1/U^0)^{N})$$
gives that $\dim H^1(B,U^0)=1$ and $\dim H^1(B,U^1/U^0)=0$. Thus  $H^1(B,U^1)=0$
and the proof is complete.
\end{proof}
\begin{lemma} \label{shekharslemma}Let $p \geq 3$. Let $G \subset GL_2(\mco/(\pi^r))$ be a subgroup. Suppose the hypotheses
of Lemma~\ref{p35} are satisfied for the image of $G \to GL_2(\FF_{p^f})$
and that the
 image
of the projection $p_2:G \to GL_2(\mco/(\pi^2))$
is full.
Then $\dim H^1(G,\ad)  =1$. 
\end{lemma}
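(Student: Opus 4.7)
The plan is to prove the upper bound $\dim H^1(G,\ad)\le 1$ via an inflation--restriction reduction controlled by Boston's lemma and a commutator calculation, and to match it by writing down one explicit non-trivial cocycle on the mod $\pi^2$ image of $G$.

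For the upper bound, let $\bar G := G/K_1$ be the image of $G$ in $GL_2(\FF_{p^f})$, where $K_1 := \ker(G\to GL_2(\FF_{p^f}))$ is a finite $p$-group acting trivially on $\ad$. The five-term inflation--restriction sequence together with $H^1(\bar G,\ad)=0$ from Lemma~\ref{p35} gives
\[
H^1(G,\ad) \hookrightarrow H^1(K_1,\ad)^{\bar G} = \mathrm{Hom}_{\bar G}(K_1^{\mathrm{ab}}/p,\ad).
\]
Applying Lemma~\ref{Boston} (legitimate since $p_2$ is full) I obtain $G\supseteq SL_2(\mco/\pi^r)$. Setting $K_i := \ker(G\to GL_2(\mco/\pi^i))$ so that each $K_i/K_{i+1}$ embeds in $M_2(\FF_{p^f}) = \ad\oplus\mathrm{triv}$ via $1+\pi^i X\mapsto X$, the standard commutator identity
\[
[1+\pi^a X,\,1+\pi^b Y]\;\equiv\;1+\pi^{a+b}[X,Y]\pmod{\pi^{a+b+1}}
\]
combined with $[\mathfrak{sl}_2,\mathfrak{sl}_2]=\mathfrak{sl}_2$ for $p\ge 3$ shows inductively that for every $i\ge 2$ the image of $[K_1,K_1]$ in $K_i/K_{i+1}$ contains the entire $\ad$-summand. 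Hence the kernel of $K_1^{\mathrm{ab}}/p\twoheadrightarrow K_1/K_2$ has only trivial $\bar G$-composition factors. Since $\bar G$ is full and $p$ is odd, $\ad$ is simple with $\ad^{\bar G}=0$, so $\mathrm{Hom}_{\bar G}(\mathrm{triv},\ad)=0$ and left-exactness yields
\[
\mathrm{Hom}_{\bar G}(K_1^{\mathrm{ab}}/p,\ad) = \mathrm{Hom}_{\bar G}(K_1/K_2,\ad) = \mathrm{Hom}_{\bar G}(\ad,\ad) = \FF_{p^f},
\]
giving $\dim H^1(G,\ad)\le 1$.

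For the matching lower bound, inflation along $G\twoheadrightarrow G_2 := \mathrm{image}(G\to GL_2(\mco/\pi^2))$ is injective on $H^1(-,\ad)$ because the kernel acts trivially, so it suffices to exhibit one non-zero class in $H^1(G_2,\ad)$. In the ramified setting $e\ge 2$ of this paper, $\mco/\pi^2\cong\FF_{p^f}[\pi]/\pi^2$ provides a ring-theoretic section $\FF_{p^f}\hookrightarrow\mco/\pi^2$ and hence a group-theoretic section $s:GL_2(\FF_{p^f})\to GL_2(\mco/\pi^2)$. Writing each $g\in G_2$ uniquely as $g = (I+\pi Y_g)\,s(\bar g)$ with $Y_g\in M_2(\FF_{p^f})$, a direct computation using $\pi^2=0$ and the multiplicativity of $s$ yields the cocycle identity $Y_{g_1g_2} = Y_{g_1} + \bar g_1\cdot Y_{g_2}$ for the $\bar G$-conjugation action; the trace-zero projection $Y^0\in Z^1(G_2,\ad)$ then restricts on $N_1:=\ker(SL_2(\mco/\pi^2)\to SL_2(\FF_{p^f}))\cong\ad$ to the identity map and so cannot be a coboundary. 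The main obstacle is the inductive bookkeeping in the upper-bound step --- verifying that commutators in $K_1$ exhaust the $\ad$-direction at every level of the $K_i$-filtration so that the sole non-trivial $\bar G$-composition factor of $K_1^{\mathrm{ab}}/p$ is the single top-level copy of $\ad$.
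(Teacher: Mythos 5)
Your proposal is correct and follows essentially the same route as the paper: apply Boston's lemma to get $SL_2(\mco/(\pi^r))\subset G$, use inflation--restriction with Lemma~\ref{p35} to reduce to $\mathrm{Hom}_{\bar G}(K_1,\ad)$, show commutators of level-one elements exhaust the $\ad$-direction at every higher level so only the top copy of $\ad$ survives, and then use fullness of the mod $\pi^2$ image (in the ramified case) to produce a non-coboundary class. The only difference is presentational: where the paper writes out explicit $2\times2$ commutators to show $\Gamma'\supset\Gamma_2$, you phrase the same fact via the filtration $K_i$ and the Lie-bracket identity $[1+\pi^aX,1+\pi^bY]\equiv 1+\pi^{a+b}[X,Y]$, and you make the one-line lower bound of the paper explicit by writing down the cocycle $Y^0$ coming from the ring section $\FF_{p^f}\hookrightarrow\mco/(\pi^2)$ and evaluating it on $N_1$.
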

\begin{proof} 
Since the image of $p_2$ is full    the hypothesis of
Lemma~\ref{Boston} is satisfied so
$G \supset SL_2(\mco/(\pi^r))$. 

Let $\Gamma$ 
be the kernel of the projection $G \to GL_2(\FF_{p^f})$. 
We have the exact inflation-restriction sequence
$$0 \to H^1(G/\Gamma,\ad^{\Gamma}) \to H^1(G,\ad) \to 
H^1(\Gamma,\ad)^{G/\Gamma}.$$
As $G/\Gamma$ is the image of the projection $p_1:G \to GL_2(\FF_{p^f})$,
Lemma~\ref{p35} implies
the first term is trivial.

Also, as $\Gamma$ acts trivially on $\ad$,
$$H^1(\Gamma,\ad)^{G/\Gamma} =Hom_{G/\Gamma}(\Gamma,\ad).$$
For any $\gamma \in Hom_{G/\Gamma}(\Gamma,\ad)$,
Kernel$(\gamma) \supset \Gamma^{'}$, the commutator subgroup of $\Gamma$.
 
Set $a=1+\pi$ and $r = \displaystyle \frac{\pi x}{2+\pi}$ and use fullness to see
$$\left(\begin{array}{cc}  a & 0 \\ 0 & a^{-1} \end{array}\right)
\left(\begin{array}{cc}  1 & r \\ 0 & 1 \end{array}\right)
\left(\begin{array}{cc}  a & 0 \\ 0 & a^{-1} \end{array}\right)^{-1}
\left(\begin{array}{cc}  1 & r \\ 0 & 1 \end{array}\right)^{-1}=
\left(\begin{array}{cc}  1 & \pi^2 x \\ 0 & 1 \end{array}\right) \in \Gamma^{'} \subset \mbox{ Kernel}(\gamma).$$
Similarly,
$$\left(\begin{array}{cc}  1 & 0 \\ \pi^2 y & 1 \end{array}\right) \in \Gamma^{'} \subset \mbox{ Kernel}(\gamma) .$$
As Kernel$(\gamma)$ is stable under the action of $SD$,
$$\left(\begin{array}{cc}  1 & 0 \\ 1 & 1 \end{array}\right) 
\left(\begin{array}{cc}  1 & \pi^2 z \\ 0& 1 \end{array}\right) 
\left(\begin{array}{rc}  1 & 0 \\ -1 & 1 \end{array}\right) 
=\left(\begin{array}{cc}  1 -\pi^2z& \pi^2 z \\ -\pi^2 z & 1+\pi^2z \end{array}\right) 
\subset \mbox{ Kernel}(\gamma).$$
Multiplying on the  left and right by suitable matrices
$$\left(\begin{array}{cc}  1 & 0 \\ \pi^2 y & 1 \end{array}\right) \mbox{ and }
\left(\begin{array}{cc}  1 & \pi^2 x \\ 0 & 1 \end{array}\right) $$ we have
$$ \left(\begin{array}{cc}  1+\pi^2 z & 0 \\ 0 & (1+\pi^2 z)^{-1} 
\end{array}\right) \in \mbox{Kernel}(\gamma).$$
As every element of
$$\Gamma_2:=\{ A \in SL_2(\mco/(\pi^r)) \mid
A \equiv I \mbox{ mod } (\pi^2) \}$$ can be written as a product
$$A = \left(\begin{array}{cc}  1 & 0\\  \pi^2 y& 1 \end{array}\right) 
\left(\begin{array}{cc}  1+\pi^2 z & 0 \\ 0& (1+\pi^2z)^{-1} \end{array}\right) 
\left(\begin{array}{cc}  1 & \pi^2 x \\ 0& 1 \end{array}\right) $$
we have Kernel$(\gamma) \supset \Gamma_2$.
Since $\Gamma/\Gamma_2 \simeq \ad$,
$$\dim Hom_{G/\Gamma}(\Gamma,\ad) \leq 
\dim Hom_{G/\Gamma}(\Gamma/\Gamma_2,\ad) = \dim Hom_{G/\Gamma}(\ad,\ad) =1$$
so $\dim H^1(G,\ad) \leq 1$. As $\mco/\ZZ_p$ is ramified,
$GL_2\left(\mco/(\pi^2)\right)\simeq GL_2\left(\FF_{p^f}[U]/(U^2)\right)$ is nontrivial
as we are given full image so $\dim H^1(G,\ad)=1$.
\end{proof}

\subsection{Selmer goups}

All sets of primes $Z$  below will be finite, contain $S$ and 
$Z\backslash S$ will consist of nice primes.

\begin{prop} \label{sub}Let $h \in H^1_{\mclt}(G_Z,Ad\rhob)$ and 
$\phi \in H^1_{ {\mclt}^{\perp}}(G_S,Ad\rhob^*)$. If $h  \in H^1(G_Z,\FF_{p^f})
\subset H^1(G_Z,Ad\rhob)$ then $h=0$.
If $\phi  \in H^1(G_Z,\FF_{p^f}(1)) \subset H^1(G_Z,Ad\rhob^*)$ then $\phi=0$.
\end{prop}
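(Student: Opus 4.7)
The strategy for both assertions is to unpack the local Selmer and dual Selmer conditions on the scalar subspace of $Ad\rhob$ (respectively, of $Ad\rhob^*$), deduce that the class is unramified almost everywhere, and then finish by class field theory.

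For the first statement, I plan to show that $h$ is unramified at every $v \in Z$. At $v \neq p$ this is immediate from the decomposition $\mclt_v = \mcl_v \oplus H^1_{nr}(G_v, \FF_{p^f})$ of Definition \ref{tildes}: since $h$ lies in the scalar summand, the only way $h|_{G_v}$ can lie in $\mclt_v$ is to sit inside $H^1_{nr}(G_v, \FF_{p^f})$. At $v = p$, the Selmer condition says the image of $h$ in $H^1(I_p, Ad\rhob/W)$ vanishes. The scalar matrix $I$ is not in $W$, so $\FF_{p^f} \hookrightarrow Ad\rhob/W$ is a $G_p$-embedding, and a direct calculation identifies the cokernel with $\FF_{p^f}(\eps^{-1})$. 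Since $p > 2$, $\eps^{-1}|_{I_p}$ is nontrivial, so $H^0(I_p, \FF_{p^f}(\eps^{-1})) = 0$; this yields an injection $H^1(I_p, \FF_{p^f}) \hookrightarrow H^1(I_p, Ad\rhob/W)$, forcing $h|_{I_p} = 0$. As $\QQ$ admits no nontrivial abelian $p$-extension unramified everywhere, $h = 0$.

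For the second statement, at $v \neq p$ the analogous splitting $\mclt_v^\perp = \mcl_v^\perp \oplus H^1_{nr}(G_v, \FF_{p^f}(1))$ gives $\phi|_{G_v}$ unramified at every $v \in S \setminus \{p\}$. The substantive step is at $v = p$, where I will show that the projection $\mclt_p \to H^1(G_p, \FF_{p^f})$ (induced by $Ad\rhob = \ad \oplus \FF_{p^f}$) is surjective. Its kernel is $\mclt_p \cap H^1(G_p, \ad)$, which coincides with $\mcl_p$: the composite $\ad \hookrightarrow Ad\rhob \twoheadrightarrow Ad\rhob/W$ has kernel $U^0 = \ad \cap W$ and induces a $G_p$-isomorphism $\ad/U^0 \cong Ad\rhob/W$ (both $2$-dimensional), so the inertial conditions defining $\mcl_p$ and $\mclt_p \cap H^1(G_p, \ad)$ coincide. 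Combining Fact \ref{h0ad0} and Proposition \ref{lv},
\[
\dim \mclt_p - \dim \mcl_p = (\dim H^0(G_p, Ad\rhob) + 2) - (\dim H^0(G_p, \ad) + 1) = 2 = \dim H^1(G_p, \FF_{p^f}),
\]
proving surjectivity. By local Tate duality $\phi|_{G_p} \in \mclt_p^\perp$ then forces $\phi|_{G_p} = 0$ on the $\FF_{p^f}(1)$ component. Finally, Kummer theory realizes $\phi$ as a class of $\alpha \in \ZZ[1/S]^* \otimes \FF_{p^f}$ modulo $p$-th powers; the unramifiedness conditions give $p \mid v_q(\alpha)$ for each $q \in S \setminus \{p, \infty\}$, so $\alpha$ reduces to $\pm p^a$ modulo $(\QQ^*)^p$, while $\phi|_{G_p} = 0$ forces $p \mid a$. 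Since $-1 = (-1)^p$ is a $p$-th power in $\QQ$ for odd $p$, we conclude $\phi = 0$.

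The principal technical obstacle is the dimension identity at $v = p$ in the second part, which must be verified uniformly across cases $1$--$5$ of Proposition \ref{vequalsp}, most delicately in case $5$ where $\mclt_p$ has been redefined from the naive $5$-dimensional kernel to a $4$-dimensional subspace. In every case, however, one still has $\mclt_p \cap H^1(G_p, \ad) = \mcl_p$, $\dim \mcl_p = \dim H^0(G_p, \ad) + 1$, and $\dim \mclt_p = \dim H^0(G_p, Ad\rhob) + 2$, so the scalar projection is uniformly surjective and the argument goes through.
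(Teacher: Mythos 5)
Your proof is correct, but it takes a genuinely different route from the paper's in the second assertion, and there is a soft spot in your justification at $v=p$.

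For the first assertion the two arguments are essentially the same: both show $h$ is unramified at every $v$ and invoke class field theory over $\QQ$. You make the statement ``$\mclt_p \cap H^1(G_p,\FF_{p^f}) = H^1_{nr}(G_p,\FF_{p^f})$'' explicit via the cokernel computation $\FF_{p^f} \hookrightarrow Ad\rhob/W$ with cokernel $\FF_{p^f}(\eps^{-1})$, where the paper simply records it as a consequence of Definition~\ref{tildes}; this also transparently covers the redefined $\mclt_p$ of case~$5$ of Proposition~\ref{vequalsp}, since that is a subspace of the naive kernel.

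For the second assertion the approaches diverge. The paper proves only the weaker local statement $\phi\vert_{G_p} \in \mathcal{M}_p^\perp$ (not $\phi\vert_{G_p}=0$), via the soft observation that for any decomposition $\mclt_p=(\mcl_p\oplus\mathcal{M}_p)+V$ one has $\mclt_p^\perp\cap H^1(G_p,\FF_{p^f}(1))\subset\mathcal{M}_p^\perp$, and then closes with Wiles's formula (Proposition~\ref{wiles}) applied to $M=\FF_{p^f}$, using the vanishing of $H^1_{\mathcal{M}}$ established in part one. You instead prove the sharper $\phi\vert_{G_p}=0$ by establishing surjectivity of the projection $\mclt_p\to H^1(G_p,\FF_{p^f})$, then finish by an explicit Kummer-theoretic argument over $\QQ$ with no appeal to Wiles's formula. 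Both are valid; the paper's is shorter and uniform across cases, while yours gives a stronger local conclusion and replaces the global formula by an elementary $S$-unit computation.

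The soft spot: your surjectivity claim rests on the identity $\mclt_p\cap H^1(G_p,\ad)=\mcl_p$, which you justify by saying ``the inertial conditions coincide'' via $\ad/U^0\cong Ad\rhob/W$. That argument identifies $\mclt_p\cap H^1(G_p,\ad)$ with $\ker\bigl(H^1(G_p,\ad)\to H^1(I_p,\ad/U^0)\bigr)$, which is exactly $\mcl_p$ in cases $1$, $2$, $4$ of Proposition~\ref{vequalsp}. In cases $3$ and $5$, however, $\mcl_p$ is the flat (or semistable) tangent space, which is \emph{not} defined by an inertial condition on $\ad/U^0$; and in case $5$ the paper has replaced the naive kernel by a strictly smaller $4$-dimensional subspace, so the inertial-condition description does not apply as stated. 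The desired identity does hold in those cases -- it follows from $\mcl_p\subset\mclt_p\cap H^1(G_p,\ad)$, the dimension formulas of Fact~\ref{h0ad0} and Proposition~\ref{lv}, and the observation that $\mclt_p$ contains a class with ramified $\FF_{p^f}$-component (the infinitesimal weight-varying direction, i.e.\ the twist $\eps^{k-1}\tilde\eps^{2-k}$ on the top-left), forcing the projection to be $2$-dimensional -- but you assert it rather than proving it. You flag case $5$ as a concern, but the resolution offered is a repetition of the claim, not an argument. This is a fixable gap; the paper's route avoids it by never needing the surjectivity.

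Finally, a cosmetic remark: the statement as written mixes $G_S$ and $G_Z$ (it takes $\phi\in H^1_{\mclt^\perp}(G_S,Ad\rhob^*)$ but then $\phi\in H^1(G_Z,\FF_{p^f}(1))$); your proof, like the paper's, implicitly treats $\phi$ as lying in dual Selmer at the larger level, which is what is needed in the applications, so this does not affect correctness.
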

\begin{proof} 
If $h \in H^1(G_Z,\FF_{p^f})$, it corresponds,
when viewed as a lift to the dual numbers, to a twist by a character which gives
rise to a $\ZZ/(p)$-extension of $\QQ$. Definition~\ref{tildes} implies that 
$\mclt_v \cap H^1(G_v,\FF_{p^f})$ 
is spanned, for all $v$, by the $\FF_{p^f}$-valued unramified
twists so the corresponding global extension is unramified everywhere so
$h=0$.

Set $M =\FF_{p^f}$ and for all $v$ set 
$${\mathcal M}_v = \mclt_v \cap H^1(G_v,\FF_{p^f}) = H^1_{nr}(G_v,\FF_{p^f}) .$$
We just showed $H^1_{\mathcal M}(G_Z,M)=0$. 
As $\dim {\mathcal M}_v =\dim H^0(G_v,M)=1$ for $v \neq \infty$,  
Proposition~\ref{wiles}   gives
$\dim H^1_{ {\mathcal M}^{\perp}}(G_Z,M^*)=0$ as well.

Any $\phi \in H^1(G_Z,\FF_{p^f}(1)) \cap H^1_{{\mclt}^{\perp}}(G_Z,Ad\rhob^*)$ 
cuts out an extension $L/\QQ(\mu_p)$ that is Galois over
$\QQ$ and $Gal(\QQ(\mu_p)/\QQ)$ acts on $Gal(L/\QQ(\mu_p))$ by $\bar \epsilon$.
At $v \neq p$ unramified cohomologies are exact annihilators under the local duality
pairing so ${\mclt}^{\perp}_v\cap H^1(G_v,\FF_{p^f}(1))
=H^1_{nr}(G_v,\FF_{p^f}(1))$.
(This last group is trivial unless $v\equiv 1$ mod $p$). So for $v \neq p$, $L/\QQ(\mu_p)$ is unramified
at $v$.

For $v=p$ choose a  subspace $V \subset H^1(G_p,Ad\rhob)$ such that
$$\mclt_p = 
\left(\mcl_p \oplus {\mathcal M}_p \right)+V,\,\,\,
V \cap \left(\mcl_p \oplus {\mathcal M}_p \right) = 0$$
so
$$\mclt^{\perp}_p = 
\left(\mcl_p \oplus {\mathcal M}_p \right)^{\perp} \cap V^{\perp}=
\left(\mcl^{\perp}_p \oplus {\mathcal M}^{\perp}_p \right) \cap V^{\perp}$$
and thus
$$\mclt^{\perp}_p \cap H^1(G_p,\FF_{p^f}(1)) \subset {\mathcal M}^{\perp}_p .$$
Thus 
$\phi \mid_{G_v} \in {\mathcal M}^{\perp}_v$ for all $v$, that is $\phi \in H^1_{ {\mathcal M}^{\perp}}(G_Z,M)$
which we already proved is trivial
so $\phi=0$.
\end{proof}


Note that for $h \in H^1(G_Z,\ad)$ and $q \notin Z$ nice, 
$h\mid_{G_q} \neq 0$ is equivalent to $h\mid_{G_q} \notin \mcl_q$. Similarly,
for $h \in H^1(G_Z,Ad\rhob)$ write $h= h_{\ad} +h_{sc}$ with $h_{\ad} \in H^1(G_Z,\ad)$
and $h_{sc} \in H^1(G_Z,\FF_{p^f})$
For $q \notin Z$ nice, 
$h\mid_{G_q} \notin \mclt_q$ is equivalent to $h_{\ad}\mid_{G_q} \notin \mcl_q$
which we just saw is equivalent to $h_{\ad}\mid_{G_q} \neq 0$.

\begin{prop} \label{nice}
Let $h\in H^1_{\mcl}(G_Z,\ad)$,
$\phi \in H^1_{{\mclt}^{\perp}}(G_Z,\ad^*)$ and let $q$ be nice.
\newline\noindent
1)  
The injective inflation 
map $$H^1(G_Z,\ad) \to H^1(G_{Z \cup \{q\}},\ad)$$ has codimension $0$ or $1$.
If  $\Sh^1(\ad^*) \mid_{G_q} = 0$ then the  codimension is $1$.
\newline\noindent
2) If  $\phi \mid_{G_q} \neq 0$ then
the maps 
$$H^1(G_{Z\cup \{q\}},\ad) \to \oplus_{v\in Z} \frac{H^1(G_v,\ad)}{{\mathcal L}_v}\mbox{ and }
H^1(G_Z,\ad) \to \oplus_{v\in Z} \frac{H^1(G_v,\ad)}{{\mathcal L}_v}$$ have the same kernel.
\newline\noindent
3) If $h,\phi\mid_{G_q} \neq 0$ then
$$\dim H^1_{\mcl}(G_{Z \cup \{q\}},\ad) = H^1_{\mcl}(G_Z,\ad) -1,\,\,
\dim H^1_{\mcl^{\perp}}(G_{Z \cup \{q\}},\ad^*) = \dim H^1_{\mcl^{\perp}}(G_Z,\ad^*) -1.$$
\newline\noindent
4) If $   H^1_{\mcl}(G_Z,\ad)  \mid_{G_q} =0$, $\phi \mid_{G_q} \neq 0$ then
$$H^1_{\mcl}(G_{Z \cup \{q\}},\ad) = H^1_{\mcl}(G_Z,\ad),\,\,
\dim H^1_{\mcl^{\perp}}(G_{Z \cup \{q\}},\ad^*) = \dim H^1_{\mcl^{\perp}}(G_Z,\ad^*).$$
\newline\noindent
5) If $   H^1(G_Z,\ad^*) 
\mid_{G_q} = 0$ then
$$H^1(G_{Z \cup \{q\}},\ad)  \to \oplus_{v\in Z} \frac{H^1(G_v,\ad)}{\mcl_v}$$
and 
$$H^1(G_Z ,\ad)  \to \oplus_{v\in Z} \frac{H^1(G_v,\ad)}{\mcl_v}$$
have the same image.
\end{prop}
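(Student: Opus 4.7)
The plan is to exploit two ingredients throughout: global Poitou-Tate reciprocity $\sum_v\langle h\mid_{G_v},\phi\mid_{G_v}\rangle=0$ for pairs of cohomology classes, and Proposition~\ref{wiles} to track the difference $\dim H^1_L-\dim H^1_{L^\perp}$ under changes of local conditions. The argument rests on the following local picture at a nice prime $q$. Since $\rhob(\mathrm{Fr}_q)$ has eigenvalues $q,1$ with $q\not\equiv 1\pmod p$, the Frobenius eigenvalues on $\ad$ are $\{1,q,q^{-1}\}$, giving $\dim H^0(G_q,\ad)=\dim H^1_{nr}(G_q,\ad)=1$; in the generic case $q\not\equiv\pm 1\pmod p$ local duality and local Euler--Poincar\'e give $\dim H^1(G_q,\ad)=2$, and the Steinberg line $\mcl_q$ of Proposition~\ref{np2} (of dimension one) is disjoint from $H^1_{nr}(G_q,\ad)$, so
$$H^1(G_q,\ad)=\mcl_q\oplus H^1_{nr}(G_q,\ad),$$
and dually $H^1(G_q,\ad^*)=\mcl_q^\perp\oplus H^1_{nr}(G_q,\ad^*)$. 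The case $q\equiv -1\pmod p$, needed only when $p=3$, requires the choice of Steinberg family from~\cite{T} but proceeds by the same formalism.

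For part~1, inflation-restriction immediately injects $H^1(G_{Z\cup\{q\}},\ad)/H^1(G_Z,\ad)$ into $H^1(G_q,\ad)/H^1_{nr}(G_q,\ad)\cong\mathbb F_{p^f}$, so $\mathrm{codim}(\iota)\in\{0,1\}$. To decide which, I compare two Selmer structures on $G_{Z\cup\{q\}}$ using Proposition~\ref{wiles}: the unrestricted one, whose Selmer group is $H^1(G_{Z\cup\{q\}},\ad)$, and the one with $L'_v=H^1(G_v,\ad)$ for $v\in Z$ and $L'_q=H^1_{nr}(G_q,\ad)$, whose Selmer group is $H^1(G_Z,\ad)$. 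The corresponding dual Selmer groups come out to $\Sh^1_{Z\cup\{q\}}(\ad^*)$ and $\Sh^1_Z(\ad^*)$, and subtracting the two Wiles identities yields
$$\mathrm{codim}(\iota)=1-\dim\bigl(\Sh^1_Z(\ad^*)\mid_{G_q}\bigr),$$
which equals $1$ under the hypothesis $\Sh^1(\ad^*)\mid_{G_q}=0$.

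Parts~2, 3 and 4 all rest on one reciprocity argument. Given $h'\in H^1(G_{Z\cup\{q\}},\ad)$ with $h'\mid_{G_v}\in\mcl_v$ for all $v\in Z$, and $\phi\in H^1_{\mcl^\perp}(G_Z,\ad^*)$ with $\phi\mid_{G_q}\neq 0$, global reciprocity on $Z\cup\{q\}$ combined with the orthogonality $\mcl_v\perp\mcl_v^\perp$ at each $v\in Z$ reduces to $\langle h'\mid_{G_q},\phi\mid_{G_q}\rangle=0$. Since $\phi\mid_{G_q}$ is a nonzero element of $H^1_{nr}(G_q,\ad^*)$, whose annihilator in $H^1(G_q,\ad)$ under local duality is exactly $H^1_{nr}(G_q,\ad)$, this forces $h'\mid_{G_q}\in H^1_{nr}(G_q,\ad)$, so $h'$ is unramified at $q$ and lies in $H^1(G_Z,\ad)$. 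Part~2 follows immediately. For parts~3 and~4 the additional Selmer condition $h'\mid_{G_q}\in\mcl_q$, together with $\mcl_q\cap H^1_{nr}(G_q,\ad)=0$, forces $h'\mid_{G_q}=0$, so that $H^1_{\mcl}(G_{Z\cup\{q\}},\ad)$ is identified with the kernel of the restriction $H^1_\mcl(G_Z,\ad)\to H^1_{nr}(G_q,\ad)$: this kernel has codimension one in part~3 (where $h\mid_{G_q}\neq 0$ makes the restriction surjective) and is all of $H^1_\mcl(G_Z,\ad)$ in part~4. The dual Selmer change is then read off from Proposition~\ref{wiles}: since the local Euler term at $q$ satisfies $\dim\mcl_q-\dim H^0(G_q,\ad)=0$, the difference $\dim H^1_\mcl-\dim H^1_{\mcl^\perp}$ is unchanged as $q$ is added.

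For part~5, $\mathrm{im}(\beta)\subseteq\mathrm{im}(\alpha)$ is immediate from $\beta=\alpha\circ\iota$. For the reverse, given $h'\in H^1(G_{Z\cup\{q\}},\ad)$, I apply global reciprocity against an arbitrary $\phi\in H^1(G_Z,\ad^*)$ (inflated to $G_{Z\cup\{q\}}$); the hypothesis $H^1(G_Z,\ad^*)\mid_{G_q}=0$ kills the local term at $q$, leaving $\sum_{v\in Z}\langle h'\mid_{G_v},\phi\mid_{G_v}\rangle=0$ for every such $\phi$. By the Poitou-Tate nine-term exact sequence for the set $Z$, this is exactly the annihilator condition placing $(h'\mid_{G_v})_{v\in Z}$ in the image of $H^1(G_Z,\ad)\to\bigoplus_{v\in Z}H^1(G_v,\ad)$, so some $h\in H^1(G_Z,\ad)$ agrees with $h'$ on every $v\in Z$, giving $\alpha(h')=\beta(h)$. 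The main obstacle is the careful Wiles bookkeeping in part~1; the rest follows uniformly from the reciprocity-plus-local-duality recipe.
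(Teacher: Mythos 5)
Your proof is correct (for the generic case $q \not\equiv \pm 1 \bmod p$, which is the one the paper's own sketch also implicitly has in mind), but it departs from the paper's method in a way worth recording. The paper proves only part~2 and declares "the proofs of all parts are similar"; the argument it gives is a pure bookkeeping argument: set $\mcl_q = H^1(G_q,\ad)$, apply Wiles' formula for $Z$ and $Z\cup\{q\}$, observe that dual Selmer can only drop and Selmer can only grow, and squeeze. Your part~1 is in exactly that spirit, and the identity $\mathrm{codim}(\iota) = 1 - \dim\bigl(\Sh^1_Z(\ad^*)\mid_{G_q}\bigr)$ you extract is the right sharpening. For parts~2--4, however, you instead argue reciprocity directly: you use the orthogonality $\mcl_v\perp\mcl_v^\perp$ at $v\in Z$ plus the local decompositions $H^1(G_q,\ad)=\mcl_q\oplus H^1_{nr}(G_q,\ad)$ and $H^1(G_q,\ad^*)=\mcl_q^\perp\oplus H^1_{nr}(G_q,\ad^*)$ (valid when $q\not\equiv\pm 1$) to conclude that any $h'$ satisfying the Selmer conditions at $v\in Z$ with $\phi\mid_{G_q}\neq 0$ must be unramified at $q$, whence the kernels (resp.\ Selmer groups) literally coincide or shrink by the right amount. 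You then read off the dual Selmer change from the Wiles formula using $\dim\mcl_q - \dim H^0(G_q,\ad)=0$. This buys something the paper's approach does not: it tells you which class is constrained, not just what the dimensions are, and it gives a uniform mechanism for parts~2, 3 and 4. Your part~5 uses the exactness of the Poitou--Tate nine-term sequence (local data annihilating $H^1(G_Z,\ad^*)$ lies in the image of the global group), which is again a concrete reciprocity argument rather than a formula subtraction; it is clean and correct. The one place where more needs to be said is the case $q\equiv -1\bmod p$ (relevant for $p=3$), where $\dim H^1(G_q,\ad)=3$ rather than $2$, the quotient $H^1(G_q,\ad)/H^1_{nr}(G_q,\ad)$ is two-dimensional, and the simple direct-sum picture fails; you acknowledge this and defer to Taylor's choice of Steinberg family in \cite{T}, which is reasonable since the paper itself does not spell out this case either.
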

\begin{proof} As the proofs of all parts are similar, 
we only prove part 2).  We use the normal local Selmer
condition for $v\in Z$, but just for this proof we set $\mcl_q=H^1(G_q,\ad)$
so $\mcl^{\perp}_q=0$.
We apply Proposition~\ref{wiles} with the sets $Z$ and $Z\cup \{q\}$. Then
$$ \dim H^1_{\mcl}(G_{ Z\cup \{q\}},\ad) - \dim H^1_{\mcl^{\perp}}(G_{Z\cup \{q\}},\ad^*)
=\dim H^1_{\mcl}(G_Z,\ad) - \dim H^1_{\mcl^{\perp}}(G_Z,\ad^*)+1.$$
As $\mcl^{\perp}_q=0$ we have 
$H^1_{\mcl^{\perp}}(G_{Z\cup \{q\}},\ad^*) \subset H^1_{\mcl^{\perp}}(G_Z,\ad^*)$ and since 
$\phi \mid_{G_q} \neq 0$ this containment is proper. As dual Selmer goes down by $1$ in dimension
as we switch from $Z$ to $Z\cup \{q\}$ the above equation implies the dimension of Selmer
does not change as we switch from $Z$ to $Z\cup \{q\}$. Since $\mcl_q=H^1(G_q,\ad)$ we have
$H^1_{\mcl}(G_Z,\ad) \subset H^1_{\mcl}(G_{ Z\cup \{q\}},\ad)$ and the result follows.
\end{proof}

\begin{prop} \label{nicefull}
Let $h\in H^1_{\mcl}(G_Z,Ad\rhob)$,
$\phi \in H^1_{{\mclt}^{\perp}}(G_Z,Ad\rhob^*)$ and $q$ be nice.
\newline\noindent
1) The injective inflation 
map $$H^1(G_Z,Ad\rhob) \to H^1(G_{Z \cup \{q\}},Ad\rhob)$$ has codimension $0$ or $1$.
If  $\Sh^1(Ad\rhob^*) \mid_{G_q} = 0$ then the 
codimension is $1$.
\newline\noindent
2)   If $h\mid_{G_q} \notin \mclt_q\mbox{ and }\phi\mid_{G_q} \neq 0$ then
$$\dim H^1_{\mcl}(G_{Z \cup \{q\}},Ad\rhob) = H^1_{\mcl}(G_Z,Ad\rhob) -1,\,\,
\dim H^1_{\mcl^{\perp}}(G_{Z \cup \{q\}},Ad\rhob^*) = \dim H^1_{\mcl^{\perp}}(G_Z,Ad\rhob^*) -1.$$
\end{prop}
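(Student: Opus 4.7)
The plan is to mirror the proof of Proposition~\ref{nice}, adapted to the full adjoint $Ad\rhob = \ad \oplus \FF_{p^f}$ (using $p$ odd) and to the tilde local conditions $\mclt_v$ from Definition~\ref{tildes}. Both parts reduce to subtracting Wiles' formula (Proposition~\ref{wiles}) applied at $Z$ and at $Z' = Z \cup \{q\}$, combined with a local analysis at the new prime $q$. I record three local facts for nice $q$ that I will use repeatedly: $\dim \mclt_q = 2 = \dim H^0(G_q, Ad\rhob)$ (from Propositions~\ref{lv} and~\ref{np2}); $H^1_{nr}(G_q, \FF_{p^f}(1)) = 0$ since $q \not\equiv 1 \pmod p$, so $\mclt_q^\perp$ is $1$-dimensional and lies entirely in the $\ad^*$-summand of $H^1(G_q, Ad\rhob^*)$; and $\mcl_q$ (the Steinberg line) is transverse to $H^1_{nr}(G_q, \ad)$, whence $\mcl_q$ pairs nondegenerately with $H^1_{nr}(G_q, \ad^*) = H^1_{nr}(G_q, \ad)^\perp$ under local duality.

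For part~$1$, apply Wiles' formula with trivial local conditions $\mathcal{M}_v = H^1(G_v, Ad\rhob)$, so the Selmer group is $H^1(G_Z, Ad\rhob)$ and the dual Selmer is $\Sh^1_Z(Ad\rhob^*)$. Subtracting the formulas at $Z'$ and $Z$ and using local Euler characteristic and local duality at $q$ yields
\[
[\dim H^1(G_{Z'}, Ad\rhob) - \dim H^1(G_Z, Ad\rhob)] - [\dim \Sh^1_{Z'}(Ad\rhob^*) - \dim \Sh^1_Z(Ad\rhob^*)] = \dim H^0(G_q, Ad\rhob^*) = 1.
\]
Inflation identifies $\Sh^1_{Z'}(Ad\rhob^*)$ with $\{\phi \in \Sh^1_Z(Ad\rhob^*) : \phi|_{G_q} = 0\}$, while the cokernel of $H^1(G_Z, Ad\rhob) \hookrightarrow H^1(G_{Z'}, Ad\rhob)$ injects by restriction into the $1$-dimensional quotient $H^1(G_q, Ad\rhob)/H^1_{nr}(G_q, Ad\rhob)$. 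The displayed identity then forces the inflation to have codimension $0$ or $1$, and the codimension equals $1$ precisely when $\Sh^1_Z(Ad\rhob^*)|_{G_q} = 0$.

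For part~$2$, apply Wiles' formula with the conditions $\mclt_v$ at every $v \in Z'$. Since $\dim \mclt_q - \dim H^0(G_q, Ad\rhob) = 0$, the $q$-contribution cancels and one obtains
\[
\dim H^1_{\mcl}(G_{Z'}, Ad\rhob) - \dim H^1_{\mcl}(G_Z, Ad\rhob) = \dim H^1_{\mcl^\perp}(G_{Z'}, Ad\rhob^*) - \dim H^1_{\mcl^\perp}(G_Z, Ad\rhob^*).
\]
The hypothesis $h|_{G_q} \notin \mclt_q$ shows that the map $H^1_{\mcl}(G_Z, Ad\rhob) \to H^1(G_q, Ad\rhob)/\mclt_q$ is nonzero, so its kernel has codimension $1$ and embeds into $H^1_{\mcl}(G_{Z'}, Ad\rhob)$ via inflation. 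Since $\mclt_q \cap H^1_{nr}(G_q, Ad\rhob) = H^1_{nr}(G_q, \FF_{p^f})$, the reverse inequality $\dim H^1_{\mcl}(G_{Z'}, Ad\rhob) \leq \dim H^1_{\mcl}(G_Z, Ad\rhob)$ follows because the quotient of $H^1_{\mcl}(G_{Z'}, Ad\rhob)$ by this inflated kernel injects into $\mclt_q / (\mclt_q \cap H^1_{nr}(G_q, Ad\rhob))$, which is $1$-dimensional. Hence each side of the displayed equality lies in $\{-1, 0\}$.

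The main obstacle is ruling out the change-by-zero case, and this is precisely where the dual Selmer hypothesis on $\phi$ comes in. Suppose for contradiction that some $c \in H^1_{\mcl}(G_{Z'}, Ad\rhob)$ has nonzero Steinberg component $c_{\ad}|_{G_q} \in \mcl_q$. By Poitou-Tate global reciprocity, $\sum_v \langle c|_{G_v}, \phi|_{G_v}\rangle = 0$. The terms for $v \in Z$ vanish since $c|_{G_v} \in \mclt_v$ and $\phi|_{G_v} \in \mclt_v^\perp$. At $v = q$, $\phi|_{G_q}$ is automatically unramified, hence lies in $H^1_{nr}(G_q, \ad^*)$ with trivial $\FF_{p^f}(1)$-component, so the sum reduces to the single pairing $\langle c_{\ad}|_{G_q}, \phi_{\ad^*}|_{G_q}\rangle$ between $\mcl_q$ and $H^1_{nr}(G_q, \ad^*)$. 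Both factors are nonzero --- the first by assumption, the second by the hypothesis $\phi|_{G_q} \neq 0$ --- and the pairing is nondegenerate by the local facts recorded above, so the sum is nonzero, a contradiction. Hence every class in $H^1_{\mcl}(G_{Z'}, Ad\rhob)$ comes from the codimension-$1$ subspace of $H^1_{\mcl}(G_Z, Ad\rhob)$ described above, giving $\dim H^1_{\mcl}(G_{Z'}, Ad\rhob) = \dim H^1_{\mcl}(G_Z, Ad\rhob) - 1$; the displayed equality then yields the same decrease for dual Selmer.
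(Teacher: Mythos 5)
The paper does not actually write out a proof of Proposition~\ref{nicefull} (it is deferred with ``similar to Proposition~\ref{nice}... see \cite{T} for 2)''), so there is nothing to compare against directly; your Wiles-formula-plus-Poitou--Tate strategy is indeed the expected one, and for nice $q$ with $q\not\equiv\pm1\pmod p$ the argument you give is correct and clean.

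However, the ``local facts'' you record at the start all quietly assume $q^2\not\equiv 1\pmod p$, and that fails for $q\equiv -1\pmod p$ --- a case the paper explicitly allows (Definition~\ref{np} only requires $q\not\equiv 1$, and the discussion after it makes clear $q\equiv -1$ is essential when $p=3$). Concretely: the eigenvalues of $\mathrm{Frob}_q$ on $Ad\rhob$ are $1,1,q,q^{-1}$, so for $q\equiv-1$ the module $Ad\rhob^*$ has $H^0(G_q,Ad\rhob^*)$ of dimension $2$, not $1$; hence $H^1(G_q,Ad\rhob)/H^1_{nr}(G_q,Ad\rhob)$ is $2$-dimensional, and your bound on the inflation cokernel in part $1$ gives codimension $\le 2$, not $\le 1$. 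Likewise $H^1_{nr}(G_q,\ad^*)$ becomes $2$-dimensional, so the pairing of the line $\mcl_q$ with $H^1_{nr}(G_q,\ad^*)$ is no longer automatically nondegenerate: there is a $1$-dimensional subspace $\mcl_q^\perp\cap H^1_{nr}(G_q,\ad^*)$, and a nonzero unramified $\phi_{\ad^*}|_{G_q}$ could land there, killing the global-reciprocity contradiction at the heart of your part $2$. You should either restrict your claim to $q\not\equiv\pm1\pmod p$, or supply a supplementary argument for $q\equiv -1$ (e.g.\ by choosing $q$ so that $\phi_{\ad^*}|_{G_q}\notin\mcl_q^\perp$, which is the transversality actually needed and is still a Chebotarev-achievable condition, and by re-examining part $1$'s counting separately).
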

The proof of Proposition~\ref{nicefull} is similar to that of Propostion~\ref{nice} and not included.
See \cite{T} for the proof of 2).
\vskip1em

Consider the deformation
to the dual numbers given by
$$\gal \stackrel{\rho_n}{\to}GL_2\left(\mco/(\pi^n)\right) \to 
GL_2\left(\mco/(\pi^2)\right) \simeq GL_2(\FF_{p^f}[U]/(U^2)).$$
The fullness assumption implies $f\neq 0$.  As
the determinant of the above composite representation is $\bar \epsilon$,
$f \in H^1_{\mcl}(G_S,\ad) \subset H^1_{\mclt}(G_S,Ad\rhob)$, that is $f$ lives in the trace zero
cohomology. This is important as in the end $f$ will  span the tangent 
space of our arbitrary weight ordinary ring {\it and} 
the tangent space of its weight $2$ quotient. 
By Corollary~\ref{prop16} we may take $\{\phi_1,\dots,\phi_s\}$ 
and $\{h_1,\dots,h_s,f\}$ 
as bases of $H^1_{ {\mclt}^{\perp}}(G_S,Ad\rhob^*)$
and $H^1_{\mclt}(G_S,Ad\rhob)$.

\begin{lemma}\label{chebs} Let $Q_i$ be the set of nice primes such that, for $q_i \in Q_i$
\begin{itemize}
\item $\phi_i|_{G_{q_i}} \neq 0$ (equivalently, ${\phi}_{i,\ad^*} \mid_{G_{q_i}} \neq 0$),
\item 
         ${h}_{i,\ad} \mid_{G_{q_i}} \neq 0$ and
         ${h}_{i,sc} \mid_{G_{q_i}} =0$,
\item for $j \neq i$, $\phi_j,h_j|_{G_{q_i}} = 0$ and
\item $q_i$ is $\rho_n$-nice, that is $\rho_n(Fr_{q_i}) =\left(\begin{array}{cc} q_i & 0\\0 & 1\end{array}\right)$ where this element
has order prime to $p$.
\end{itemize}
Then $Q_i$ is nonempty.
\end{lemma}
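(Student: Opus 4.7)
The plan is to argue by Chebotarev density. Let $K$ be the fixed field of $\ker(\rho_n)$; since $\det\rho_n = \eps$, $K\supset \QQ(\mu_{p^n})$, so $G_K$ acts trivially on both $Ad\rhob$ and $Ad\rhob^*$. Let $F$ be the compositum of $K$ with the fixed fields of the cocycles $\phi_1,\dots,\phi_s,h_1,\dots,h_s$ (splitting each into its $\ad^{(*)}$ and scalar components). Restricted to $G_K$, every such cocycle becomes a $Gal(K/\QQ)$-equivariant homomorphism $Gal(F/K)\to Ad\rhob^{(*)}$, and conjugacy classes in $Gal(F/\QQ)$ will be produced by Chebotarev.

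First I would choose $\sigma_0\in Gal(K/\QQ)$ with $\rho_n(\sigma_0)=\left(\begin{array}{cc}a & 0\\ 0 & 1\end{array}\right)$, where $a\in(\ZZ/p^n)^*\subset(\mco/(\pi^n))^*$ is the Teichm\"uller lift of some element of $\FF_p^*\setminus\{1\}$. Such $\sigma_0$ exists by fullness of $\rho_n$ combined with surjectivity of $\det\rho_n=\eps$ onto $(\ZZ/p^n)^*$, and the Teichm\"uller condition makes $\rho_n(\sigma_0)$ of order prime to $p$. This choice of $\sigma_0$ will ultimately deliver the $\rho_n$-niceness.

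Next I would modify $\sigma_0$ by a suitable $\tau\in Gal(F/K)$. Using the cocycle identity $\psi(\sigma_0\tau)=\psi(\sigma_0)+\sigma_0\cdot\psi(\tau)$, the prescribed values $\phi_j(\sigma_0\tau),h_j(\sigma_0\tau)$ for all $j$ translate into linear conditions on $\psi(\tau)$. The existence of $\tau$ satisfying all these conditions simultaneously rests on the joint linear independence of the restrictions $\phi_j|_{G_K},h_j|_{G_K}$ as $Gal(K/\QQ)$-equivariant homomorphisms $Gal(F/K)\to Ad\rhob^{(*)}$. On the $\ad^{(*)}$ summands this independence is controlled by inflation-restriction together with Lemma~\ref{p35} (and its $\ad^*$-analogue) applied to the image of $\rhob$; on the scalar summands $\FF_{p^f}$ and $\FF_{p^f}(1)$ the key input is Proposition~\ref{sub}, which prevents any $\phi_i$ or $h_i$ from being a ``ghost'' living entirely in the scalar part where independence could silently fail. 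Using the $\bar a/1$-eigenspace decomposition of $Ad\rhob^{(*)}$ under $\rhob(\sigma_0)$, I would then single out the component in which the nonzero values of $\phi_i(\sigma_0\tau)$ and $h_{i,\ad}(\sigma_0\tau)$ are required to sit.

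Finally, Chebotarev density applied to the conjugacy class of $\sigma_0\tau$ in $Gal(F/\QQ)$ produces infinitely many primes $q_i$ with $Fr_{q_i}$ conjugate to $\sigma_0\tau$. For any such $q_i$ one has $\rho_n(Fr_{q_i})=\rho_n(\sigma_0)=\left(\begin{array}{cc}a & 0\\ 0 & 1\end{array}\right)$ (since $\tau$ is trivial under $\rho_n$); combined with $\det\rho_n(Fr_{q_i})=\eps(Fr_{q_i})=q_i$ in $\mco/(\pi^n)$, this forces $a=q_i$, so $\rho_n(Fr_{q_i})$ is conjugate to $\left(\begin{array}{cc}q_i & 0\\ 0 & 1\end{array}\right)$ with tame order, establishing $\rho_n$-niceness. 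The cocycle (non-)vanishing conditions then follow from the chosen values of $\tau$. The main obstacle I anticipate is the simultaneous linear independence in the preceding paragraph: the interaction of the scalar summands with $\rho_n$ is subtle, and one must rule out any cocycle degenerating to an unramified-everywhere scalar class---this being precisely the content of Proposition~\ref{sub}.
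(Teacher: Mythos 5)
Your proposal correctly identifies the overall strategy (Chebotarev density applied to a carefully chosen conjugacy class), and you rightly flag Proposition~\ref{sub} as the device that handles the scalar pieces $\FF_{p^f}$ and $\FF_{p^f}(1)$. However, you misidentify the main obstacle, and the argument has a genuine gap as written.

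The real difficulty is not the scalar summands but the interaction between the trace-zero ($\ad$) parts of the $h_i$ and the field $L_n$ cut out by $\rho_n$. Both the $h_i$-extensions and $Gal(L_n/\QQ(\rhob))$ give rise to $\FF_{p^f}[Gal(\QQ(\rhob)/\QQ)]$-modules whose Jordan--H\"older pieces are $\ad$ (or $Ad\rhob$), so there is no \emph{a priori} reason these extensions should be linearly disjoint over $\QQ(\rhob)$. If they intersect nontrivially, some $h_i$ could vanish or become dependent after restriction, and your claimed Chebotarev conjugacy class $\sigma_0\tau$ might not exist. Moreover, your invocation of Lemma~\ref{p35} to control this is not adequate: Lemma~\ref{p35} gives $H^1(\mathrm{Im}(\rhob),\ad)=0$, which makes restriction to $G_{\QQ(\rhob)}$ injective, but says nothing about what happens when you restrict further to $G_K$ with $K$ the fixed field of $\ker(\rho_n)$. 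There, $H^1(\mathrm{Im}(\rho_n),\ad)$ is in fact \emph{nonzero} --- it is $1$-dimensional by Lemma~\ref{shekharslemma} --- so there genuinely is a cohomology class killed by further restriction, and one must identify it.

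The paper resolves this precisely via Lemma~\ref{shekharslemma}: if the composite of the fields fixed by the $h_i|_{G_{\QQ(\rhob)}}$ met $L_n$ nontrivially, that intersection is abelian over $\QQ(\rhob)$, and the $1$-dimensionality from Lemma~\ref{shekharslemma} forces the intersection to contain $\ker(f|_{G_{\QQ(\rhob)}})$; this makes $f$ lie in the span of the trace-zero parts of the $h_i$, and Proposition~\ref{sub} upgrades this to a dependence of $f$ on $\{h_1,\dots,h_s\}$, contradicting their choice as a basis. This chain of reasoning is the essential content of the lemma and is absent from your proposal --- you never invoke Lemma~\ref{shekharslemma}, which is the one tool that controls abelian $\ad$-extensions inside the field cut out by $\rho_n$. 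Without it, the claim that the fourth (complete splitting in $L_n$) condition is independent of the first three is unjustified.
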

\begin{proof} 
It suffices to show the  conditions above are independent Chebotarev conditions,
that is the determine linearly disjoint extensions over $K:=\QQ(\rhob)$,
 and thus
can be simultaneously satisfied.

Each of the cohomology classes above, when restricted to the absolute Galois group of the field
$K$, becomes an element of $Hom_{Gal(K/\QQ)}(G_K,M)$ for $M=Ad\rhob$
or $Ad\rhob^*$. For $M=\ad$ or $\ad^*$, the 
independence of the first three conditions has been established in \cite{R2} and \cite{T}. 
The case of full adjoint cohomology results
from these works and Proposition~\ref{sub} as follows.
Write  $\phi_i={\phi}_{i,\ad^*}+{\phi}_{i,\FF_{p^f}(1)}$ where ${\phi}_{i,\ad^*} \in H^1(G_S,\ad^*)$  and 
${\phi}_{i,\FF_{p^f}(1)} \in H^1(G_S,\FF_{p^f}(1))$. We claim the set 
$\{{\phi}_{1,\ad^*},\dots,{\phi}_{s,\ad^*}\}$ is independent.
Indeed, suppose $\displaystyle\sum^s_{j=1} a_j   {\phi}_{j,\ad^*}   =0$ is a dependence relation. Then 
$$\sum^s_{j=1} a_j \phi_j 
=\sum^s_{j=1} a_j(  {\phi}_{j,\ad^*}+{\phi}_{j,\FF_{p^f}(1)} ) = \sum^s_{j=1} a_j {\phi}_{j,\FF_{p^f}(1)}
\in H^1_{\mclt^{\perp}}(G_S,\ad^*) \cap 
H^1(G_S,\mu_p)$$ which is $0$ by Proposition~\ref{sub}, a contradiction. 
Let $L$ be the composite of the fields fixed by the kernels of $\phi_i \mid_{G_K}$. 
Then $Gal(L/K)$ contains, when viewed as a $\FF_{p^f}[Gal(K/\QQ)]$-module,
$s$ copies of $\ad^*$
by \cite{R1}, \cite{T}.
A similar argument gives that the composite of the fields fixed by the kernels of $h_i \mid_{G_K}$
and $f\mid_{G_K}$ contains $s+1$ copies of $\ad$. This reduces the independence of the first three
conditions to the same question with $\ad$ and $\ad^*$ cohomology where it is known.

The fourth condition is a complete splitting condition from $K$ to 
$L_n$,  the field fixed by the kernel of $\rho_n$.
The Jordan-H\"{o}lder components of 
$Gal(K_n/K)$ are  $\FF_{p^f}[Gal(K/\QQ)]$-submodules
 of $Ad\rhob$ that are either $Ad\rhob$ or $\ad$.
As the fields fixed by the kernels of the $\phi_i\mid_{G_K}$ give
$Ad\rhob^*$ (or $\ad^*$) extensions, these are linearly disjoint over $K$ from $L_n$.
The fields fixed by the  kernels of the $h_i \mid_{G_K}$ give rise to
$Ad\rhob$ (or $\ad$) extensions of $K$. If the composite of these fields intersects $L_n$ 
nontrivially then, as this intersection is abelian over $K$, the proof of
Lemma~\ref{shekharslemma} applied to $\rho_n$ implies this composite contains 
Kernel$(f\mid_{G_K})$
and 
$f$ is in the span of the trace zero parts of $\{{h}_{1,\ad},\dots,{h}_{s,\ad}\}$. 
Proposition~\ref{sub} then implies
$f$ is  in the span of $\{h_1,\dots,h_s\}$,  a contradiction.
Thus the composite of the fields fixed by the $h_i$ is linearly disjoint over $K$ from $L_n$.
\end{proof}

\subsection{Proof of Theorem \ref{easy}}

Lundell has proved the following
\begin{theorem}\label{lundell}  (Lundell) 
Let $\rhob$ be odd, ordinary, full  and weight $2$ with determinant $\eps$.
Suppose for a set $T$ $\dim H^1_{\mclt^{\perp}}(G_T,Ad\rhob^*)=0$. Then
$\dim H^1_{\mclt}(G_T,Ad\rhob)=1$ and
the ordinary arbitrary weight deformation ring $R^{ord,T-new} \simeq \wfq[[U]]
\simeq \TT^{ord,T-new}$, the universal ordinary modular deformation ring associated to $\rhob$.
Let $\{g\}$ be a basis of $H^1_{\mclt}(G_T,Ad\rhob)$. 
The weight $2$ quotient $R^{ord,T-new} _2 \simeq \wfq$ if and only if
$g \notin H^1_{\mcl}(G_T,\ad)$, that is if and only if $g$ does not belong to trace
$0$ cohomology.
\end{theorem}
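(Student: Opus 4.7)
The plan is to carry out the proof in three stages: a tangent space count, a smoothness plus Hecke identification, and then the analysis of the weight~$2$ quotient.

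First, I would extend Corollary~\ref{prop16} from $S$ to the larger set $T$. Proposition~\ref{wiles} applied with $M = Ad\rhob$ and the conditions $\mclt_v$ has contribution at $v \in S$ controlled by Proposition~\ref{lv}, and contribution $\dim \mclt_q - \dim H^0(G_q, Ad\rhob) = 0$ at each nice prime $q \in T \setminus S$ by Proposition~\ref{np2}. Combined with the Archimedean contribution as in Corollary~\ref{prop16}, this gives
$$ \dim H^1_{\mclt}(G_T, Ad\rhob) - \dim H^1_{\mclt^\perp}(G_T, Ad\rhob^*) = 1. $$
The vanishing hypothesis then forces $\dim H^1_{\mclt}(G_T, Ad\rhob) = 1$.

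Next, I would deduce formal smoothness of $R^{ord, T-new}$. Proposition~\ref{vequalsp} (with the replacement of $\mclt_p$ described in case~$5$) says each local deformation condition $\mclt_v$ is cut out by a smooth local deformation ring, so by Mazur's obstruction theory the obstruction to lifting from $\mco/\pi^m$ to $\mco/\pi^{m+1}$ lies in a dual Selmer group killed by hypothesis. With a one-dimensional tangent space and no obstruction, $R^{ord, T-new} \simeq \wfq[[U]]$. There is the tautological surjection $R^{ord, T-new} \twoheadrightarrow \TT^{ord, T-new}$; the target is nonzero by Diamond--Taylor level-raising \cite{DT} and finite flat of Krull dimension~$2$ over the Iwasawa algebra $\Z_p[[T]]$ by Hida's theory. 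Since $\wfq[[U]]$ is a regular local ring of Krull dimension~$2$ with no proper Krull-dimension-$2$ quotient, the surjection must be an isomorphism.

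Finally, for the weight~$2$ quotient $R^{ord, T-new}_2$: imposing weight~$2$ at $p$ together with the global condition $\det = \eps$ forces its mod-$\pi$ tangent space to sit inside $H^1_{\mcl}(G_T, \ad) \subset H^1_{\mclt}(G_T, Ad\rhob)$ (trace zero because the determinant is pinned down to $\eps$, and with $\mclt_p$ cut down to $\mcl_p$ because weight~$2$ is the more restrictive local condition at $p$). Since the ambient tangent space is one-dimensional spanned by $g$, this intersection is zero precisely when $g \notin H^1_{\mcl}(G_T, \ad)$, forcing $R^{ord, T-new}_2 \simeq \wfq$ in that case; conversely, if $g \in H^1_{\mcl}(G_T, \ad)$ then $R^{ord, T-new}_2$ has a nonzero tangent direction and cannot equal $\wfq$.

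The point I expect to require the most care is the equality $\mclt_p \cap (\text{weight-}2 \text{ subspace}) = \mcl_p$ inside trace-zero cohomology, especially in the split flat case~$5$ of Proposition~\ref{vequalsp}, where one has already restricted to the smooth $4$-dimensional quotient; but this follows directly from the explicit local descriptions given there and in Definition~\ref{tildes}.
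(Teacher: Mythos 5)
The paper gives no proof of this theorem; it is attributed to Lundell's unpublished Cornell thesis (see the Acknowledgements). So there is no in-paper argument to compare against, and your proposal must be assessed on its own.

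Stages 1 and 2 of your plan are sound: the Wiles formula with the local contributions of Propositions~\ref{lv} and~\ref{np2}, together with the hypothesis $\dim H^1_{\mclt^{\perp}}(G_T,Ad\rhob^*)=0$, give the one-dimensional tangent space; and then the Krull-dimension comparison between $\wfq[[U]]$ and $\TT^{ord, T-new}$ (the latter nonzero by Diamond--Taylor level raising and finite flat over $\ZZ_p[[T]]$ by Hida) forces both surjections $\wfq[[U]]\twoheadrightarrow R^{ord,T-new}\twoheadrightarrow \TT^{ord,T-new}$ to be isomorphisms.

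Stage 3 has a genuine gap at the last step. You claim that the vanishing of the weight-$2$ tangent space $H^1_{\mcl}(G_T,\ad) = H^1_{\mclt}(G_T,Ad\rhob)\cap H^1(G_T,\ad)$ ``forces $R^{ord,T-new}_2\simeq\wfq$.'' But trivial reduced tangent space only says $\mathfrak m_{R^{ord,T-new}_2}=(p)$ by Nakayama, i.e.\ $R^{ord,T-new}_2$ is a quotient of $\wfq$, which could a priori be $\wfq/(p^m)$ or $0$. To rule these out you must use what your Stage 2 has made available: the weight-$2$ quotient is identified with the weight-$2$ layer of the Hida Hecke algebra, hence is nonzero and finite flat over $\wfq$, and the only nonzero $\wfq$-flat quotient of $\wfq$ is $\wfq$ itself. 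Equivalently, in the spirit of the paper's later Lemma~\ref{disting}, the weight-$2$ relation $w_2(U)$ cutting $R^{ord,T-new}_2$ out of $\wfq[[U]]$ may be taken distinguished of degree $\geq 1$, and trivial tangent space forces its degree to equal $1$, giving $\wfq[[U]]/(w_2(U))\simeq\wfq$. As written you assert the conclusion without supplying either the flatness step or the distinguished-polynomial step, and one of them is needed. The converse direction (nontrivial tangent space implies not $\wfq$) is fine as stated.
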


\noindent{\bf Remark:} Using the ordinary deformation rings of Proposition~\ref{vequalsp},
this theorem handles all possible $\rhob \mid_{G_p}$. It does {\em not} handle
all possible $\rho_n \mid_{G_p}$ as $5$) of the proposition only allows
flat weight $2$ deformations.

\vskip1em
\noindent
{\it Proof of Theorem~\ref{easy}.}
Choose $q_i \in Q_i$ and set $Q =\{q_1,\dots ,q_s\}$ and $T=S\cup Q$. 
Part 2) of Proposition~\ref{nicefull} implies the Selmer and dual Selmer groups
decrease in dimension by $1$ for each $q_i$ at which we allow ramification. Thus
 $H^1_{\mclt^{\perp}}(G_T,Ad\rhob^*)=0$ and
$H^1_{\mclt}(G_T,Ad\rhob)$
 is spanned by $f$ so $R^{ord,T-new} \simeq\wfq_p[[U]]$. 
Since we assume $\rhob$ is modular and 
absolutely irreducible, \cite{DT} implies that
$R^{ord,T-new}$ has   characteristic zero points in all classical weights. Thus it
is in fact a Hida family.
By the fourth condition on the $q_i \in Q_i$ we see that that $\rho_n$ arises as a point of 
$R^{ord,T-new}_2$,
the weight $2$ quotient of $R^{ord,T-new}$. 
\hfill$\square$

\section{$\rho_n$ lifts to an $\mco$-valued weight $2$ point}

Suppose  $\rho_n:\gal \to GL_2(\mco/(\pi^n))$ is odd,  ordinary, weight $2$, modular, has full image,
with determinant  $\eps$. We assume $\rho_n$ is   balanced. Throughout this section we assume for simplicity that $\mco$ is totally ramified.

Suppose also that $\rho_n \mid_{G_v} \in  {\mathcal C}_v$ for all $v \in S$.
Recall that for $v=p$ we require that if $\rhob = 
\left(\begin{array}{cc} \epsb & 0\\0 & 1\end{array}\right)$ then ${\mathcal C}_p$
is taken to be the flat deformations, so we {\em must} assume $\rho_n \mid_{G_p}$
is flat. 

If $\rhob \mid_{G_p}=\left(\begin{array}{cc} \epsb & *\\0 & 1\end{array}\right)$
is flat and $\rho_n$ is flat (but not semistable), 
we take ${\mathcal C}_p$ to be the flat deformations.
If $\rho_n$ is not flat but semistable, we take ${\mathcal C}_p$ to be the semistable
deformations.  If $\rho_n$ is both flat and  semistable, we take ${\mathcal C}_p$
to be the flat or semistable deformations however we please. The universal representations corresponding to the  deformation rings considered in  Theorem \ref{hard}  are locally at $p$ of type ${\mathcal C}_p$.

The goal of this section is to prove Theorem~\ref{hard}.

\begin{theorem} \label{hard}

There exists a finite set of primes $T_2\supseteq S$ such that $R^{ord,T_2-new} \simeq\wfq[[U]]$
and  there are maps
$$\wfq[[U]] \simeq R^{ord,T_2-new} \twoheadrightarrow 
R^{ord,T_2-new}_2 \twoheadrightarrow \mco \twoheadrightarrow \mco/(\pi^n)$$
 inducing $\rho_n$. The map $R^{ord,T_2-new}_2 \twoheadrightarrow \mco$
is an isomorphism for $\rhob$ in cases $1)$, $2)$ and $4)$ of Proposition~\ref{vequalsp}. 
For the exceptional $\rhob$ described above
(cases $3)$ and $5)$ of Proposition~\ref{vequalsp}), we have that $\mco$ is isomorphic to either
the semistable or flat  quotient
of the weight $2$ deformation ring,  according
to how we have chosen ${\mathcal C}_p$.
\end{theorem}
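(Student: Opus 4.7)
The plan is to construct a characteristic zero, weight $2$, ordinary lift $\rho\colon G_\QQ \to GL_2(\mco)$ of $\rho_n$ together with a finite set of primes $T_2 \supseteq S$ containing the ramification of $\rho$, arranged so that (a) the hypotheses of Theorem~\ref{easy} are met for $T_2$ (yielding $R^{ord,T_2-new} \simeq \wfq[[U]]$) and (b) the weight $2$ Hida Hecke algebra $\TT^{ord,T_2-new}_2 \simeq R^{ord,T_2-new}_2$ reduces to exactly $\mco$ (in cases (1), (2), (4) of Proposition~\ref{vequalsp}) or has flat/semistable quotient equal to $\mco$ (in cases (3), (5)). The factorisation $R^{ord,T_2-new} \twoheadrightarrow R^{ord,T_2-new}_2 \twoheadrightarrow \mco \twoheadrightarrow \mco/(\pi^n)$ inducing $\rho_n$ then follows from the universal property applied to $\rho$.

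The construction of $\rho$ is the technical core and uses the techniques of \cite{KLR} in an essential way, since the splitness of $GL_2(\mco/(\pi^{k+1})) \to GL_2(\mco/(\pi^k))$ when $\mco/\ZZ_p$ is ramified blocks the straightforward reduction to mod $p$ theory employed in \cite{R2}. One lifts $\rho_n$ inductively to weight $2$, ordinary, balanced representations $\rho_{n+k}\colon G_\QQ \to GL_2(\mco/(\pi^{n+k}))$ by adjoining, at each step, nice primes chosen via Chebotarev arguments in the spirit of Lemma~\ref{chebs} (but with the $\rho_n$-nice condition strengthened to $\rho_{n+k}$-nice) that kill the obstruction in the weight $2$ dual Selmer group $H^1_{\mclt^\perp}(G_{S_k},Ad\rhob^*)$. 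The subtle point, carried out in \cite{KLR}, is to interleave these Chebotarev conditions across all $k$ so that a \emph{finite} set of nice primes suffices to trivialise the obstructions at every level simultaneously, rather than requiring a fresh prime at every stage. Passing to the limit $k \to \infty$ yields the desired $\rho$, ramified at this finite set, which together with $S$ defines $T_2$.

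With $T_2$ so chosen, Theorem~\ref{easy} applies (the required Selmer/dual Selmer conditions hold by construction) to give $R^{ord,T_2-new} \simeq \wfq[[U]]$, and $\rho$ provides the surjection onto $\mco$. To upgrade this to an isomorphism in cases (1), (2), (4) of Proposition~\ref{vequalsp}, invoke Lundell's Theorem~\ref{lundell} together with Hida's theorem to identify $R^{ord,T_2-new}_2 \simeq \TT^{ord,T_2-new}_2$ as a finite flat $\wfq$-algebra; being a quotient of $\wfq[[U]]$, by Weierstrass preparation it takes the form $\wfq[U]/(h(U))$ for a distinguished polynomial $h$, and the surjection onto $\mco \simeq \ZZ_p[U]/(g(U))$ (with $g$ the Eisenstein polynomial of $\pi$) forces $g \mid h$. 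The fine control over the auxiliary nice primes in the KLR construction, whose Steinberg conditions eliminate unwanted Galois orbits of weight $2$ newforms from the Hecke algebra, can be arranged to isolate a single orbit, giving $h=g$ and hence $R^{ord,T_2-new}_2 \simeq \mco$. In cases (3) and (5) of Proposition~\ref{vequalsp} the weight $2$ local ring at $p$ is not smooth, so one replaces $R^{ord,T_2-new}_2$ by its flat (resp.\ semistable) quotient as dictated by the fixed choice of $\mathcal{C}_p$, and the same argument identifies this quotient with $\mco$.

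The main obstacle is the iterative $\pi$-adic construction of $\rho$ in the second paragraph: one must balance the auxiliary primes so that a single finite set clears the lifting obstruction at every level \emph{and} cuts the weight $2$ Hecke algebra down to exactly $\mco$ (without also killing the orbit one wants to keep). This delicate two-way compatibility is precisely what the ramified refinement of \cite{KLR} provides; without it, new primes would be needed at every level $k$, producing an infinite auxiliary set and no finite $T_2$.
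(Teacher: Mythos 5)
There is a genuine gap, and the overall route is not the paper's.

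Your central claim is that one can lift $\rho_n$ step by step to $\rho_{n+k}$ over $\mco/(\pi^{n+k})$ by killing the weight~$2$ dual Selmer group $H^1_{\mclt^\perp}(G_{S_k},Ad\rhob^*)$ with $\rho_{n+k}$-nice primes, and then pass to the limit. This does not work, for exactly the reason the paper isolates as the new difficulty when $\mco/\ZZ_p$ is ramified. Killing the $Ad\rhob^*$ dual Selmer group is precisely what Theorem~\ref{easy} already does, and it only yields $R^{ord,T\text{-}new}\simeq\wfq[[U]]$ in \emph{arbitrary} weight. Smoothness of the arbitrary-weight ring gives no control on its weight~$2$ quotient, and in particular gives no lift of $\rho_n$ to $\mco/(\pi^{n+1})$ of weight~$2$: the paper's explicit example $R^{ord,T\text{-}new}_2\simeq\ZZ_p[[U]]/\bigl((U-p)(U-2p)(U-3p)\bigr)$ with $\rho_3$ arising from $U\mapsto\sqrt p$ shows that $\rho_n$ can arise from a weight~$2$ quotient that has no $\mco$-point above it. There is no further Selmer group one can kill to repair this: the weight~$2$ (trace-zero) Selmer and dual Selmer groups are balanced, and the class $f$ coming from $\rho_n\bmod\pi^2$ must lie in Selmer, so the minimal weight~$2$ Selmer and dual Selmer groups cannot both be killed with $\rho_n$-nice primes. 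Your inductive step therefore has no mechanism. Likewise, ``passing to the limit $k\to\infty$'' with a fixed finite auxiliary set is not something \cite{KLR} supplies; \cite{KLR} and \cite{R3} produce lifts ramified at \emph{infinitely} many primes, and the whole point of the present theorem is to avoid that.

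The paper's actual strategy is structurally different and in some sense opposite to yours. It does not try to lift $\rho_n$ directly. Instead (Proposition~\ref{vli}, Lemma~\ref{YN}) it first uses the \cite{R3}/\cite{KLR} machinery to make the weight~$2$ quotient \emph{large}, arranging a surjection $R^{ord,Y_N\text{-}new}_{2^*}\twoheadrightarrow\wfq[[U]]/(p^N,U^{Ne})\twoheadrightarrow\mco/(\pi^n)$ while keeping the arbitrary-weight ring $\wfq[[U]]$. Then it deliberately \emph{introduces} an obstruction, by choosing a nice prime $q_1$ satisfying the very specific fourth bullet of the Chebotarev set $Q$, namely $\rho_{p,e+1}(\mathrm{Frob}_{q_1})=\left(\begin{smallmatrix} q_1(1+U^e)&0\\0&1-U^e\end{smallmatrix}\right)$; this is nice for the mod~$(p,U^e)$ representation but \emph{not} nice mod~$(p,U^{e+1})$, which is exactly what forces the determinant relation $v_{2^*,N}(U)$ to have degree equal to $e$ (Case~1; Case~2 adds a second prime $q_2$ and runs a more elaborate Selmer-surjectivity argument). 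Finally, since $v_{2^*,N}(U)\in(p^N,g_\pi(U),U^{Ne})$ with $g_\pi$ the minimal polynomial of $\pi$, a Krasner argument for $N$ large shows each root of $v_{2^*,N}$ is within the Krasner bound of a distinct root of $g_\pi$, giving $\wfq[[U]]/(v_{2^*,N}(U))\simeq\mco$ and, since the relevant root $y_{N,1}$ satisfies $|y_{N,1}-\pi|<|\pi^n|$, the specialization still induces $\rho_n$. Your claim that the surjection $\wfq[U]/(h)\twoheadrightarrow\mco$ ``forces $g\mid h$'' and that one can ``isolate a single orbit, giving $h=g$'' is not justified and is not how the identification with $\mco$ is obtained; the paper never gets $h=g_\pi$ exactly, only a polynomial close to it, and this approximation is the point of Lemma~\ref{disting}, the degree-$e$ control, and the Krasner step. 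The ``build large, then cut down by introducing an obstruction at a carefully chosen prime'' mechanism is the key idea of the proof and is absent from your proposal.
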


In this section we will study various universal ordinary rings $R^{ord,?-new}$ associated
to $\rhob$ and their weight $2$ quotients $R^{ord,?-new}_2$. If we are in an exceptional
case of Theorem~\ref{hard} we need a quotient of the weight $2$ ring having property
$* \in \{flat,semistable\}$.
We will use the notation
$R^{ord,?-new}_{2^*}$ 
to indicate this quotient of the full weight $2$ ring. In cases $1$), $2$) and $4$) of Proposition~\ref{vequalsp} $R^{ord,?-new}_{2} \simeq R^{ord,?-new}_{2^*}$.


The
weight $2$ quotient of any $R^{ord,X-new}\simeq \wfq[[U]]$ is formed by quotienting out
$R^{ord,X-new}$
by a relation $w_2(U)$ that fixes the determinant of the $X$-new deformation to be the cyclotomic character.
\begin{lemma}\label{disting} The modularity of $\rhob$ implies
we may assume $w_2(U)\in \wfq[[U]]$ is a distinguished polynomial.
\end{lemma}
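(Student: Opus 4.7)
My plan is to identify $R^{ord,X-new}_2$ with the corresponding weight-$2$ Hida Hecke algebra, deduce its finiteness as a $\wfq$-module from Hida's theory and modularity of $\rhob$, and then extract a distinguished polynomial generator of $(w_2)$ via Weierstrass preparation.

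First I would invoke Theorem~\ref{lundell} to identify $R^{ord,X-new} \simeq \wfq[[U]] \simeq \TT^{ord,X-new}$; this isomorphism is compatible with passage to the weight-$2$ quotient, so $\wfq[[U]]/(w_2(U)) \simeq R^{ord,X-new}_2 \simeq \TT^{ord,X-new}_2$. Modularity of $\rhob$ in weight $2$, together with the level-raising results of Diamond--Taylor in \cite{DT}, guarantees that $\TT^{ord,X-new}_2$ is non-zero, while Hida's structure theorem for ordinary Hecke algebras (the full $\TT^{ord,X-new}$ is finite flat over the weight-space ring, and the weight-$2$ specialization is classical and non-zero) gives that $\TT^{ord,X-new}_2$ is \emph{finite} as a $\wfq$-module.

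Next I would apply Weierstrass preparation to $w_2(U)$: write $w_2 = p^r\, u(U)\, g(U)$ with $u\in \wfq[[U]]^\times$, $g$ a distinguished polynomial of some degree $n\geq 0$, and $r\geq 0$. Since the quotient is non-zero, $w_2$ is a non-unit, so $r\geq 1$ or $n\geq 1$. From the short exact sequence
$$0 \to (g)/(p^r g) \to \wfq[[U]]/(p^r g) \to \wfq[[U]]/(g) \to 0,$$
one checks, by cancelling $g$ in the domain $\wfq[[U]]$, that $(g)/(p^r g) \cong (\wfq/p^r)[[U]]$; if $r\geq 1$ this submodule is infinite-dimensional over $\wfq$, contradicting the finiteness of $R^{ord,X-new}_2$ over $\wfq$ established in the previous step. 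Hence $r = 0$ and $(w_2) = (g)$, so we may replace $w_2$ by the distinguished polynomial $g$.

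The only real content is the input from modularity and Hida theory used to ensure non-vanishing and finiteness of $\TT^{ord,X-new}_2$; the rest is pure commutative algebra. As noted in the introduction just above this section, this lemma is precisely the single place in the proof of Theorem~\ref{hard} where modularity of $\rhob$ enters.
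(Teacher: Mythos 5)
Your argument is correct but takes a genuinely different route from the paper's. The paper also begins with Weierstrass preparation $w_2 = p^t v_2(U) u(U)$, but to rule out $t\geq 1$ it does not invoke finiteness of the weight-$2$ quotient over $\wfq$. Instead it works on the $R$-side with the universal determinant $j(U)$ (so that $w_2(U) = j(U) - (1+p)$), observes via level raising (\cite{DT}) and Hida theory that the family has a classical point of \emph{every} weight $k\geq 2$, and then computes that $j(U) - (1+p)^2 = p\left[-1 - p + p^{t-1}v_2(U)u(U)\right]$. Since a weight-$2$ point exists, $v_2$ has positive degree, so for $t\geq 1$ the bracketed expression is a unit, hence $j(U)-(1+p)^2$ has no zero and there would be no weight-$3$ point — a contradiction. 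Your proof stays entirely within weight $2$ and replaces this weight-$3$ detour with the cleaner observation that $\TT^{ord,X-new}_2$ being finite over $\wfq$ forbids $p \mid w_2$; this is arguably more direct. The trade-off is that the paper's argument needs only the \emph{surjection} $R^{ord,X-new}\twoheadrightarrow\TT^{ord,X-new}$ to supply classical points and never has to match the two weight-$2$ quotients, whereas your argument leans on the identification $R^{ord,X-new}_2 \simeq \TT^{ord,X-new}_2$ being induced by the isomorphism of Theorem~\ref{lundell} — i.e., that the isomorphism respects the determinant/diamond structure so that the weight-$2$ primes correspond. That compatibility is true and implicit in the setup, but it is an extra thing to check that the paper's route sidesteps.
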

\begin{proof} Set $j(U)$
to be the determinant of our ordinary representation
evaluated at a topological generator of the Galois group of the cyclotomic $\ZZ_p$-extension of $\QQ$
so that at a weight $t$ point $U_t \in {\sf m}_{{\bar \ZZ}_p}$, $j(U_t)=(1+p)^{t-1}$.
Writing $j(U)=1+p+w_2(U)$,   the weight $2$ quotient is $\wfq[[U]]/(w_2(U))$.
The Weierstrass preparation theorem
implies $w_2(U) = p^tv_2(U)u(U)$ where $v_2$ is a distinguished polynomial and $u(U)$ is a 
unit. 
We need to prove  $t=0$ so suppose $t\geq 1$. 

By  \cite{DT}
$R^{ord,X-new}\simeq \wfq[[U]]$ has a point of each classical weight $k\geq 2$.
The weight $3$ quotient
is formed imposing the relation $j(U)=(1+p)^2$, that is quotienting out by 
$$j(U)-(1+p)^2=w_2(U) -p-p^2 = p^t v_2(U)u(U) -p-p^2 =
p\left[-1-p+p^{t-1} v_2(U)u(U)\right].$$
As there is at least one weight $2$ point in this Hida family, $v_2(U)$ has positive degree so
the rightmost quantity above is  $p$ times a unit and is never $0$ for any choice of $U$. Thus if $t\geq 1$ there
are no weight $3$ points, a contradiction. 
\end{proof}

The roots of $w_2(U)$ will give rise to all weight $2$ deformations of $\rhob$. In cases
$3$) and $5$) of Proposition~\ref{vequalsp}, we have $w_{2,fl}(U), w_{2,ss}(U) | w_2(U)$
where the roots of $w_{2,*}$ give rise to weight $2$ points with the appropriate local at $p$
property. {\it A priori}   $w_{2,fl}$ and $w_{2,ss}$ could share a root, but geometricity and the Weil bounds imply
this is not the case, though we do not need this for our purposes.
 Henceforth we will write $R/(w_{2,*}(U))$ to indicate the weight two quotient
with which we are dealing (the full weight $2$ quotient except in cases $3$) and $5$) of Proposition~\ref{vequalsp}) and will control to be $\mco$.

\subsection{Recollection  of earlier work in \cite{R3}}

Let $T$ be as in Theorem~\ref{easy}.
A key technical ingredient in this section is the main lifting result of \cite{R3}, which
in turn builds on  \cite{KLR}. The point of \cite{R3} was to build a pathological
Galois representation by removing all obstructions to deformation problems.
Here we repeat this procedure for a {\it finite number of steps}, but then we {\it introduce} an obstruction later to
force $R^{ord,T_2-new}_{2,*}$ to be `close to' a specified ring. This closeness will allow
us to choose $R^{ord,T_2-new}_{2,*}$ to be isomorphic to a given totally
ramified extension of $\wfq$.

We recall some of the key ingredients of \cite{R3}.
In \cite{R3} the only the fields $\FF_p$ for $p \geq 5$ were used. 
Recall that here we consider $\FF_{p^f}$ with $q=p^f$ and $p \geq 3$.
First consider the  hypotheses of section $4$ of \cite{R3}:

\begin{itemize}
\item  Fullness of the image of $\rhob$ we assume here. 
\item Triviality of $$\Sh^1_T(\ad^*) := \mbox{Kernel} \left( H^1(G_T,\ad^*) \to \oplus_{v\in T} H^1(G_v,\ad^*)\right)$$
can be  realised as follows. Note that for $T\subset Z$, $\Sh^1_Z(\ad^*) \subset \Sh^1_T(\ad^*)$. Then
let $\{\theta_1,\dots,\theta_t\}$ be a basis for $\Sh^1_T(\ad^*)$ and choose nice primes $q_i$
such that 
\begin{itemize}
\item $q_i$ is $\rho_n$-nice. Recall this is a complete splitting condition on $q_i$ in
$Gal(L_n/\QQ(\rhob,\mu_p))$,
\item $H^1(G_T,Ad\rhob) \mid_{G_{q_i}} =0$ and
\item  $\theta_i \mid_{G_{q_i}} \neq 0$ and  $j \neq i \implies\theta_j  \mid_{G_{q_i}} =0$.
\end{itemize}
Replacing   $T$ by $T\cup \{q_1,\dots,q_t\}$ (which we rename $T$) gives $\Sh^1_T(\ad^*)=0$.
\item  The third hypothesis of section $4$ of \cite{R3} was that the local deformations be specified
uniquely. This was equivalent to specifying $\wfq$ as our  smooth quotient of each local deformation ring.
We simply ignore that here and use ${\mathcal C}_v$ as our (weight $2$) set of local points as usual.
\end{itemize}

Suppose now we have an ordinary weight $2$ deformation of $\rhob$, $\rho_R:G_Z \to GL_2(R)$ where $R$
is a finite complete local Noetherian ring with residue field $\FF_{p^f}$ and $\rho_R \mid_{G_v} \in {\mathcal C}_v$
for all $v \in Z$. Let $R_1$ be another such
ring and $$ R_2 \twoheadrightarrow R_1 \stackrel{\delta}{\twoheadrightarrow} R$$ be  surjections with 
Ker$(\delta) = (b)$, a principal ideal 
isomophic
to $ \FF_{p^f}$.
It is natural to ask whether $\rho_R$ deforms to a 
$\rho_{R_1}:G_Z \to GL_2(R_1)$ of weight $2$.
The obstruction lies in $H^2(G_Z,\ad)$.
As $\Sh^1_T(\ad^*) $ is dual to $\Sh^2_T(\ad) $, the second  bullet
point above implies this obstruction is realised locally. But as  $\rho_R \mid_{G_v} \in {\mathcal C}_v$
for all $v \in Z$ and the ${\mathcal C}_v$ represent the points of a smooth ring, there are no local obstructions
and $\rho_{R_1}$ exists. It may be, however, that there are $v_0 \in Z$ with $\rho_{R_1} \mid_{G_{v_0}} \notin {\mathcal C}_{v_0}$.
In this case deforming to $R_2$ may not be possible.
The smoothness of the local deformation rings implies that $\rho_R \mid_{G_v}$ has a deformation to $GL_2(R_1)$
arising from ${\mathcal C}_v$ for all $v \in Z$. 
The obstruction to deforming $\rho_{R_1} \mid_{G_{v_0}}$ to $R_2$ can be removed by a cohomology class 
$z_{v_0}\in H^1(G_{v_0},\ad)$. We call the collection $(z_v)_{v \in Z}$ the {\it local condition problem} for $\rho_{R_1}$.
Proposition $3.4$ of \cite{R3} shows that there exists a $\rho_{R_1}$-nice prime $q$, and  $h \in H^1(G_{Z \cup \{q\}},\ad)$  that solves
the local condition problem above, that is $(I+ h)\rho_{R_1} \mid_{G_v}\in {\mathcal C}_v$ for all $v \in Z$. 
The difficulty is that we cannot guarantee $(I+ h)\rho_{R_1} \mid_{G_q}\in {\mathcal C}_q$. 
If this fails for all $\rho_{R_1}$-nice primes, Proposition $3.6$ of \cite{R3} shows
shows how to add two nice primes $q_1$ and $q_2$ to $Z$ and find a cohomology
class $h \in H^1(G_{Z \cup \{q_1,q_2\}},\ad)$ such that 
$(I+ h)\rho_{R_1} \mid_{G_v}\in {\mathcal C}_v$ for all $v \in Z \cup \{q_1,q_2\}$.



\subsection{Strategy of the proof of Theorem \ref{hard}}

We use the integer $N$ throughout this section to denote a large natural number.
This largeness will depend only on
$\rho_n:\gal \to GL_2(\mco/(\pi^n))$. We explain the strategy, which gives an indication of how 
$N$ is chosen.
The first step  is to construct a deformation problem where the
arbitrary
weight ordinary deformation ring will be $\wfq[[U]]$ and its weight $2$ quotient
will surject onto $\wfq[[U]]/(p^N,U^{Ne})$ which in turn will surject onto $\mco/(\pi^n)$
and give rise to $\rho_n$. So
$$\wfq[[U]] \simeq R^{ord} \twoheadrightarrow R^{ord}_{2^*} =\wfq[[U]]/(w_{2^*,N}(U))
\twoheadrightarrow   \wfq[[U]]/(p^N,U^{Ne})  \twoheadrightarrow      \mco/(\pi^n).$$
If in this composite $U$ maps to an element of $\mco/(\pi^n)$ whose various lifts
to $\mco$ have valuation greater than $1/e$ then the deformation to $\mco/(\pi^2)$
would be trivial, contradicting the fullness of $\rho_n$. Thus
$U$  maps to an element whose lifts to $\mco$ are uniformisers.
After multiplying by a unit, we may assume $U \mapsto \pi$.
Our strategy is to then alter the problem by allowing more ramification
so that $w_{2^*,N}(U)$ is exactly of degree $e$ and `close to' $g_{\pi}(U)$, the minimal
polynomial of $\pi$ over $W(\FF_{p^f})$. 
The choice of $N$ will depend on $|\pi^n|$ and the Krasner bound on the distances between roots
of $g_{\pi}(U)$.
Furthermore $w_{2^*,N}$ has a root
and has a root $y_{N,1}$
such that the deformation given by $U\mapsto y_{N,1}$ gives rise to $\rho_n$. Thus
$\rho_n$ will have a weight $2$ characteristic $0$ lifting.


\subsection{Weight 2 deformation rings that are large}
In this section we will construct large weight $2$ deformation rings that give rise
to $\rho_n$.  The technical hypotheses on $\rho_n \mid_{G_p}$
in the introduction arise here.

\begin{prop} \label{vli} For any integer $N$,
there exists a set $X_N \supseteq T$ such that  
$$R^{ord,X_N-new}_{2^*} 
\twoheadrightarrow   \wfq[[U]]/(p^N,U^{Ne})\twoheadrightarrow 
\mco/(\pi^n).$$
\end{prop}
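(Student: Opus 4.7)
The plan is to construct, for a suitable $X_N \supseteq T$, a type-$\mathcal C_v$ ordinary weight-$2$ (or, in the exceptional cases of Proposition~\ref{vequalsp}, flat/semistable) deformation
$$\widetilde{\rho}_N \colon G_{X_N} \longrightarrow GL_2\bigl(\wfq[[U]]/(p^N, U^{Ne})\bigr)$$
of $\rhob$ whose specialization along $U \mapsto \pi$ recovers $\rho_n$. Since $\widetilde{\rho}_N$ lies in the deformation functor represented by $R^{ord,X_N-new}_{2^*}$, versality provides a map $R^{ord,X_N-new}_{2^*} \to \wfq[[U]]/(p^N, U^{Ne})$; because $\rho_n$ is already a nontrivial deformation to $\mco/(\pi^2)$, the induced tangent vector is nonzero, so $U$ lies in the image and the map is surjective. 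Composing with $U \mapsto \pi$ then recovers the factorization through $\mco/(\pi^n)$ that induces $\rho_n$.

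I build $\widetilde{\rho}_N$ by iterated small-extension lifting, following the strategy of \cite{R3} (which in turn rests on \cite{KLR}). First, enlarge $T$ so that $\Sh^1_T(Ad\rhob^*) = 0$, using $\rho_n$-nice primes realizing a basis of $\Sh^1_T$ in the manner recalled at the start of this section. Then fix a filtration of $\wfq[[U]]/(p^N, U^{Ne})$ by small extensions with kernel $\cong \FF_{p^f}$, chosen so that its base is a quotient onto which $\rho_n$ visibly extends (recall $\mco/(\pi^n)$ is itself a quotient of $\wfq[[U]]/(p^N,U^{Ne})$ whenever $Ne \geq n$).

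Given a type-$\mathcal C_v$ ordinary lift $\rho^{(i)} \colon G_{T_i} \to GL_2(R_i)$ extending $\rho_n$, I lift to $R_{i+1}$ in four sub-steps. \textit{(a)} The global $H^2(G_{T_i}, \ad)$ obstruction is locally realized since $\Sh^1_{T_i}(Ad\rhob^*)$ stays trivial under enlargement of the ramification set. \textit{(b)} Proposition~\ref{vequalsp} and standard smoothness at $v \neq p$ provide local lifts into $\tilde{\mathcal C}_v$. \textit{(c)} Patching and adjusting by a global cocycle yields $\rho_{R_{i+1}}$ satisfying $\mathcal C_v$ at all but possibly finitely many $v$; as in Propositions $3.4$ and $3.6$ of \cite{R3}, adjoining one or two more nice primes $q$ together with a class $h \in H^1(G_{T_i \cup \{q\}}, \ad)$ resolves the residual local condition problem, giving $(I+h)\rho_{R_{i+1}}\mid_{G_v} \in \tilde{\mathcal C}_v$ for every $v$. \textit{(d)} An unramified $\FF_{p^f}$-twist restores the determinant to $\eps$, keeping us in the weight-$2$ (or flat/semistable) problem; by Proposition~\ref{sub} this twist does not disturb the local conditions. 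After finitely many steps one arrives at $\widetilde{\rho}_N$ defined on $X_N := T_M$.

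The main obstacle is arranging that the auxiliary primes introduced in step \textit{(c)} are not merely nice but $\rho_n$-nice, so that $\rho_n$ --- extended trivially at each newly adjoined prime --- remains compatible with the growing lift throughout the induction. This is precisely the content of Lemma~\ref{chebs}, which was set up for $\rho_n$ rather than $\rhob$ for exactly this reason; combined with the $\rho_n$-nice clause in Definition~\ref{np}, it ensures the specialization $U \mapsto \pi$ of $\widetilde{\rho}_N$ equals $\rho_n$ as demanded.
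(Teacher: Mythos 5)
Your strategy is essentially the paper's: build a $\rho_n$-compatible deformation of $\rhob$ to a quotient of $\wfq[[U]]$ mapping onto $\wfq[[U]]/(p^N,U^{Ne})$ by iterated small-extension lifting in the style of \cite{R3}/\cite{KLR}, enlarging the ramification set by one or two nice primes at each stage, and then conclude surjectivity of $R^{ord,X_N-new}_{2^*}\twoheadrightarrow\wfq[[U]]/(p^N,U^{Ne})$ from versality plus the nontrivial tangent vector furnished by $\rho_n\bmod(\pi^2)$. The paper merely makes the chain of small extensions explicit (quotienting by ideals generated by $p^rU^sg_\pi(U)$ and $p^rU^{s+n}$ with $r+s=N+Ne$), which is a convenience rather than a different idea. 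Two small corrections to your write-up: your step (d), the unramified twist to ``restore the determinant'', is superfluous --- you are already running the obstruction theory in the trace-zero cohomology $H^*(G_Z,\ad)$, so the determinant is fixed to $\eps$ at every stage, and in any case Proposition~\ref{sub} does not assert that a twist preserves the local conditions (it shows certain Selmer classes valued in the scalar summand vanish). Also, Lemma~\ref{chebs} is not what controls the auxiliary primes here; it is used in the proof of Theorem~\ref{easy}. In the present induction the nice primes must be nice for the \emph{current} (growing) lift, which is what Propositions~$3.4$ and~$3.6$ of \cite{R3} arrange; such primes are automatically $\rho_n$-nice since the lift surjects onto $\rho_n$, so compatibility with $\rho_n$ comes for free.
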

\begin{proof} Theorem $1.1$ of  \cite{R3}, based on techniques of \cite{KLR},
gives examples of weight $2$ deformation rings that are arbitrarily large 
and ramified at infinitely many primes. It is proved by taking an inverse limit
of certain finite cardinality quotients of
deformation rings that satisfy a specified property at $v\in S_0$, namely the local representation at $G_v$ is 
(the reduction of) a specific deformation of $\rhob \mid_{G_v}$ to $\ZZ_p$.
While the ring
$\wfq[[U]]/(p^N,U^{Ne})$ is not explicitly included there, the techniques 
apply. The
deformation of Theorem~\ref{easy} to $\mco/(\pi^n)$ factors 
\begin{equation}\label{eq:surjs}
\wfq[[U]] = R^{ord,T-new}
\twoheadrightarrow 
R^{ord,T-new}_{2^*}=\wfq[[U]]/(w_{2^*}(U)) \twoheadrightarrow 
\mco/(\pi^n)
\end{equation}
and $U \mapsto \pi$ in the composite.
Let $g_{\pi}(U)$ be the minimal polynomial of $\pi$ over $\wfq$.
Observe that both $g_{\pi}(U)$ and $U^n$ are in the kernel of the composite map.
As $U \mapsto \pi$ gives an isomorphism $\wfq[[U]]/(g_{\pi}(U),U^n) \simeq \mco/(\pi^n)$
the kernel of ~\eqref{eq:surjs} is $(g_{\pi}(U),U^n)$ so $w_{2^*}(U) \in (g_{\pi}(U),U^n)$.
For $N \geq n$ note  $p^N, U^{Ne} \in (g_{\pi}(U),U^n)$ as they both  map to $0$ in ~\eqref{eq:surjs}.

We will construct a ring $R=\wfq[[U]]/I$ (not yet a deformation ring!) that surjects onto
$\wfq[[U]]/(p^N,U^{Ne})$ and from there onto  $\wfq[[U]]/(g_{\pi}(U),U^n) \simeq \mco/(\pi^n)$. 
We'll build $R$ as a series of small extensions of 
of $\wfq[[U]]/(g_{\pi}(U),U^n)$
and then invoke Proposition $3.6$ of  \cite{R3}
to realise all of these rings as quotients of  weight $2$ deformation rings.
Consider the map 
$$\wfq[[U]]/( pg_{\pi}(U),Ug_{\pi}(U),U^n) \twoheadrightarrow
\wfq[[U]]/(g_{\pi}(U),U^n).$$
The kernel is just the ideal  $(g_{\pi}(U))$ and this is
killed by $(p,U)$, the maximal ideal of $\wfq[[U]]$,
so the extension is small. Similarly,
the map 
$$\wfq[[U]]/( pg_{\pi}(U),Ug_{\pi}(U),pU^n,U^{n+1}) \twoheadrightarrow
\wfq[[U]]/( pg_{\pi}(U),Ug_{\pi}(U),U^n) $$
has kernel $(U^n)$ and this is also killed by $(p,U)$. Repeat this process (with more and more
elements in our ideal) until all the generators are of the form $p^rU^sg_{\pi}(U)$ or $p^rU^sU^n$ where
$r+s =N+Ne$. Let $I$ be this ideal of relations. In each relation
either $r\geq N$ or $s \geq Ne$ so $I \subset (p^N,U^{Ne}) \subset( g_{\pi}(U),U^n)$.
 Then
$$\wfq[[U]]/I \twoheadrightarrow \cdots \twoheadrightarrow
\wfq[[U]]/(g_{\pi}(U),U^n) \simeq \mco/(\pi^n)$$ 
is a series of small extensions.

Now we'll use 
\cite{R3} to  deform $\rho_n$  to each small extension, perhaps allowing ramification
at one or two nice primes at each step. If we can deform $\rho_n$ all the way to $\wfq[[U]]/I$
without allowing more ramification, then we are done. If not, there is a first place
at which the small deformation problem is obstructed. This is not at $\rho_n$
as $\Sh^2_T(\ad)=0$ (being dual to $\Sh^1_T(\ad^*)$) and the local defomation problems $\rho_n$
are assumed unobstructed.
The smoothness of the chosen quotients of the local deformation rings implies there are local
cohomology classes $(h_v)_{v\in T}$ that `unobstruct' each of the given local deformation
problems. Proposition $3.4$ of \cite{R3} implies that with one nice prime $q$ the local
deformation problems at $v\in T$ can be `unobstructed' by a global
class in $H^1(G_{T\cup \{q\}},\ad)$, but possibly this class introduces an obstruction
 at $q$. If all nice primes introduce such an obstruction, the rest of section $3$
of \cite{R3} shows how to allow ramification at two nice primes $\{q_1,q_2\}$ so that the
obstruction introduced at these primes cancel one another. Then we deform and 
move on to the next small extension.
Set $X_N$  to be the final set of nice primes.
\end{proof}

The primes $q$ used in Proposition~\ref{vli} 
were $\rho_n$-nice so
$f \mid_{G_q} =0$ and
 $f \in  H^1_{\mcl}(G_{X_N},\ad)\subset   H^1_{\mclt}(G_{X_N},Ad\rhob)$ 
but this last space could have  dimension $>1$
so the first step in our strategy is not yet complete.  Lemma~\ref{YN}
remedies this.

\begin{lemma} \label{YN}
There exists a set $Y_N$ containing $X_N$ of Proposition~\ref{vli} such that
$\dim H^1_{\mclt^{\perp}}(G_{Y_N},Ad\rhob^*)=0$ and 
$\dim H^1_{\mclt}(G_{Y_N},Ad\rhob)=1$ so
$R^{ord,Y_N-new} \simeq \wfq[[U]]$. Furthermore  
\begin{equation}\label{eq:seq}
\wfq[[U]] \simeq R^{ord,Y_N-new} \twoheadrightarrow
R^{ord,Y_N-new}_{2^*} =\wfq[[U]]/(w_{2^*,N}(U))\twoheadrightarrow
\wfq[[U]]/(p^N,U^{Ne})\twoheadrightarrow \mco/(\pi^n).
\end{equation}
\end{lemma}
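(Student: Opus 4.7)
\medskip\noindent\textit{Proof plan.} The plan is to enlarge $X_N$ by finitely many $\rho_n$-nice primes chosen so that the dual Selmer group dies and the Selmer group is cut down to the one-dimensional subspace spanned by the class $f$ of $\rho_n \bmod \pi^2$. Once this is achieved, Theorem~\ref{lundell} and Lemma~\ref{disting} identify $R^{ord,Y_N-new}$ with $\wfq[[U]]$ and its weight $2^*$ quotient with $\wfq[[U]]/(w_{2^*,N}(U))$ for a distinguished polynomial $w_{2^*,N}$, and a comparison with Proposition~\ref{vli} at level $X_N$ produces the chain \eqref{eq:seq}.

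Concretely, I would first check that
$$\dim H^1_{\mclt}(G_{X_N},Ad\rhob)-\dim H^1_{\mclt^\perp}(G_{X_N},Ad\rhob^*)=1,$$
which follows from Proposition~\ref{wiles} together with Proposition~\ref{lv} and the fact (Proposition~\ref{np2}) that each nice prime $v\in X_N\setminus S$ contributes $\dim\mclt_v-\dim H^0(G_v,Ad\rhob)=0$. Pick a basis $\{\phi_1,\dots,\phi_r\}$ of $H^1_{\mclt^\perp}(G_{X_N},Ad\rhob^*)$ and extend $\{f\}$ to a basis $\{h_1,\dots,h_r,f\}$ of $H^1_{\mclt}(G_{X_N},Ad\rhob)$. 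That $f$ still lies in the Selmer group at level $X_N$ is recorded in the remark following Proposition~\ref{vli}: at each $\rho_n$-nice prime of $X_N\setminus S$, $f$ is unramified, hence in $\mclt_q$.

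Next, I would invoke the Chebotarev argument of Lemma~\ref{chebs} with $X_N$ in place of $S$ and the new bases above. The linear disjointness input used there depends only on the fullness of $\rhob$ and on Lemma~\ref{shekharslemma} and Proposition~\ref{sub} applied to $\rho_n$, none of which are sensitive to the enlargement from $S$ to $X_N$. This produces $\rho_n$-nice primes $q_1,\dots,q_r$ with, for each $i$: $\phi_i|_{G_{q_i}}\neq 0$ and $\phi_j|_{G_{q_i}}=0$ for $j\neq i$; $h_{i,\ad}|_{G_{q_i}}\neq 0$ with $h_{i,sc}|_{G_{q_i}}=0$, while $h_j|_{G_{q_i}}\in\mclt_{q_i}$ for $j\neq i$; and $\rho_n(\mathrm{Fr}_{q_i})$ of the prescribed diagonal form (so $f|_{G_{q_i}}\in\mclt_{q_i}$ automatically). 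Setting $Y_N:=X_N\cup\{q_1,\dots,q_r\}$ and applying Proposition~\ref{nicefull}(2) at each step in turn drops both Selmer and dual Selmer dimensions by one, leaving $H^1_{\mclt^\perp}(G_{Y_N},Ad\rhob^*)=0$ and $H^1_{\mclt}(G_{Y_N},Ad\rhob)$ spanned by $f$.

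Theorem~\ref{lundell} then gives $R^{ord,Y_N-new}\simeq\wfq[[U]]$, and the weight $2^*$ quotient is $\wfq[[U]]/(w_{2^*,N}(U))$ by Lemma~\ref{disting}. Since every $X_N$-new deformation is unramified at the primes of $Y_N\setminus X_N$ and so lies in $\tilde{\mathcal C}_q$ there, universality yields a surjection $R^{ord,Y_N-new}\twoheadrightarrow R^{ord,X_N-new}$ compatible with passage to weight $2^*$; composing with the chain of Proposition~\ref{vli} produces the desired sequence~\eqref{eq:seq}. The main obstacle is the Chebotarev step: one must verify that the linear-disjointness argument of Lemma~\ref{chebs} still goes through when the $h_i,\phi_i$ are allowed to be ramified at primes in $X_N\setminus S$, and in particular that the extra cohomology classes picked up do not cause the composite of the $h_i$-fields to contain the direction cut out by $f$ inside $L_n/K$ -- this last point is exactly what Lemma~\ref{shekharslemma}, applied to $\rho_n$, rules out.
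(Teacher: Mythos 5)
There is a genuine gap, and it is exactly the point the paper spends the bulk of its proof handling. You impose only that the new primes $q_i$ be $\rho_n$-nice, and then try to recover the surjection $R^{ord,Y_N-new}_{2^*} \twoheadrightarrow \wfq[[U]]/(p^N,U^{Ne})$ by claiming a universality map $R^{ord,Y_N-new}\twoheadrightarrow R^{ord,X_N-new}$, on the grounds that an $X_N$-new deformation is unramified at the primes of $Y_N\setminus X_N$ and ``so lies in $\tilde{\mathcal C}_q$ there.'' This last step is false: being unramified at a nice prime $q$ does not place a deformation in $\tilde{\mathcal C}_q$. The space $\mclt_q = \mcl_q \oplus H^1_{nr}(G_q,\FF_{p^f})$ has dimension $2$ but contains the ramified Steinberg line $\mcl_q$, and is not equal to $H^1_{nr}(G_q,Ad\rhob)$; at the level of points, an unramified deformation must in addition have Frobenius eigenvalues in the ratio $q$ (after unramified twist) to be Steinberg. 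Hence there is no canonical map $R^{ord,Y_N-new}\to R^{ord,X_N-new}$, and your chain~\eqref{eq:seq} is not established.

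What the paper does instead is require the $q_i$ to be $\rho_N$-nice, where $\rho_N$ is the deformation of Proposition~\ref{vli} valued in $\wfq[[U]]/(p^N,U^{Ne})$, not merely $\rho_n$-nice. This directly forces $\rho_N(Fr_{q_i})$ to be $\bigl(\begin{smallmatrix}q_i & 0\\ 0 & 1\end{smallmatrix}\bigr)$ of order prime to $p$, so $\rho_N|_{G_{q_i}} \in \tilde{\mathcal C}_{q_i}$, and therefore $\rho_N$ itself is a $Y_N$-new deformation, yielding $R^{ord,Y_N-new}_{2^*}\twoheadrightarrow \wfq[[U]]/(p^N,U^{Ne})$ with no need for the nonexistent universality map. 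The cost is that one must then verify the Chebotarev independence against the field cut out by $\rho_N$ rather than $\rho_n$, which in turn requires knowing that the image of $\rho_N$ is full: the paper proves this via the quotient $\wfq/(p^2)$, an argument at $p$ using ordinarity to produce an element of order $2p$, a comparison with the $\ad$-extension coming from $f$, and Boston's Lemma~\ref{Boston}. Your proposal omits this step (understandably, since $\rho_n$-niceness only needs fullness of $\rho_n$, which is assumed), but that omission is precisely what makes the final surjection onto $\wfq[[U]]/(p^N,U^{Ne})$ unavailable. To repair the argument you should replace ``$\rho_n$-nice'' by ``$\rho_N$-nice'' in your fourth Chebotarev condition, add the fullness-of-$\rho_N$ verification, and then obtain the surjection by observing that $\rho_N$ is a $Y_N$-new weight-$2^*$ point, exactly as the paper does.
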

\begin{proof} 
The proof is  similar to that of Lemma~\ref{chebs}.
Let $\rho_N:G_{X_N} \to GL_2\left( \wfq[[U]]/(p^N,U^{Ne})\right)$ 
be the deformation
of Proposition~\ref{vli}.
We will need that the image of $\rho_N$ is full. While this is easy to prove for $p>3$
as then ~\eqref{eq:nonsplit} is nonsplit, the proof below works for $p=3$
and exploits ordinariness.

The ring $\wfq[[U]]/(p^N,U^{Ne})$ has the quotient $\wfq/(p^2)$
and there is the  extension 
\begin{equation}\label{eq:nonsplit}
1\to \ad \to GL_2\left(\wfq/(p^2)\right) \to GL_2(\FF_{p^f}) \to 1. 
\end{equation}
When restricted to $G_p$
the deformation
$$\gamma:G_{Y_N}  \to GL_2\left(\wfq/(p^2)\right)$$
is of the form
$\left( \begin{array}{cc} \eps \psi & * \\0 & \psi^{-1}\end{array}\right)$ where the $\psi$ is unramified.
Thus the image of $\gamma$ contains a lifting
of $\left(\begin{array}{cc} -1 & 0\\0 & 1\end{array}\right) \in GL_2(\FF_{p^f})$
to an element of order $2p$ in $GL_2\left(\wfq/(p^2)\right)$ so the image
is full.
The cohomology class $f$ also gives rise to an  extension with
the same short exact sequence as above corresponding to the deformation to the dual numbers.
We show the $\ad$ parts of these extensions are different by considering the images of $G_p$.
The class $f \in H^1(G_{Y_N},\ad)$ belongs to the trace zero cohomology.
When comparing the field extension it cuts out over $\QQ(\rhob)$ to that of $\gamma$, it suffices to
twist $\gamma$ by a character $\phi$ so   $det(\gamma \otimes \phi)=\tilde{\eps}$, the Teichm\"{u}ller lift
of $\eps$. One easily sees that the image of inertia at $p$ in 
$\left((\gamma \otimes \phi) (G_p)\right)^{ab}$ is at least of order $p$ while
inertia at $p$ has image of order  $p-1$ in the corresponding abelianisation of the lift to the dual
numbers arising from $f$. Thus the $\ad$ extensions are distinct and
the
image of $\rho_{R^{ord,Y_N-new}_2} \mod (p,U)^2$ is full.  Lemma~\ref{Boston} implies
$\rho_{R^{ord,Y_N-new}_2} \mod (p^N,U^{Ne})$ has full image.

Now take
$\{ \phi_1,\dots,\phi_s\}$ and  $\{ h_1,\dots ,h_s,f \}$ as bases for
$H^1_{{\mclt}^{\perp}}(G_{X_N},Ad\rhob^*)$ and $H^1_{\mclt}(G_{X_N},Ad\rhob)$.
As before $f \in H^1_{\mcl}(G_{X_N},\ad)$ is the cohomology class arising from $\rho_n$ mod $(\pi^2)$.
Let $Q_i$ be the Chebotarev
set of primes $q_i$ satisfying
\begin{itemize}
\item $\phi_i|_{G_{q_i}} \neq 0$ (equivalently ${\phi}_{i,\ad^*} \neq 0$) , 
\item $h_i|_{G_{q_i}} \notin \mclt_{q_i}$ (equivalently ${h}_{i,\ad} \mid_{G_{q_i}} \neq 0$) and ${h}_{i,sc} \mid_{G_{q_i}} =0$, 
\item for $j \neq i$, $\phi_j,h_j|_{G_{q_i}} = 0$ and
\item $q_i$ is $\rho_N$-nice, that is $\rho_N(Fr_{q_i}) =\left(\begin{array}{cc} q_i & 0\\0 & 1\end{array}\right)$ where this element
has order prime to $p$. 
\end{itemize}
Setting $$\Gamma=\{ A \in \mbox{Image}(\rho_N) \mid A \equiv I \mod (p,U)\}$$
and using the fullness of $\rho_N$ established above,
one can easily adapt the proof of Lemma~\ref{shekharslemma} to show $\dim H^1(\mbox{Image}(\rho_N),\ad) =1$.
One then modifies Lemma~\ref{chebs} to show the above bullet points
are independent Chebotarev conditions.

Let $q_i \in Q_i$
and set $Y_N=X_N \cup \{q_1,\dots q_s\}$,
Then by part 2) of Propostion~\ref{nicefull} 
$\dim H^1_{\mclt^{\perp}}(G_{Y_N},Ad\rhob^*)=0$ and 
$\dim H^1_{\mclt}(G_{Y_N},Ad\rhob)=1$ and this last group has basis $\{f\}$.
The ordinary ring
$R^{ord,Y_N-new}\simeq\wfq[[U]]$.
The fourth condition on the $Q_i$ implies that the deformation $\rho_N$ arises from the
weight $2$ quotient of $R^{ord,Y_N-new}$ so $R^{ord,Y_N-new}_{2^*} \twoheadrightarrow\wfq[[U]]/(p^N,U^{Ne})$.
\end{proof}

The second surjection of ~\eqref{eq:seq} 
implies
$w_{2^*,N}(U) \in (p^N,U^{Ne})$ and Lemma~\ref{disting} 
implies $w_{2,N}(U)$ is a distinguished polynomial. It thus has  degree at least $Ne$.

\subsection{Cutting down the size of weight 2 deformation rings via local obstructions}

Our next step is to add more nice primes of ramification   so that the new 
weight $2$ ordinary  ring is a quotient of $\wfq[[U]]$  
by a polynomial $v_{2,N}(U)$ of 
degree  {\it exactly} $e$. Furthermore, $v_{2,N}(U)$ will have  a root
$y_{N,1}$ such that $U \mapsto y_{N,1}$ gives rise to $\rho_n$.

Let $C$ be a positive number smaller than both $|\pi^n|$ and the minimum
of half the distances between any pairs of roots of $g_{\pi}$,
its  Krasner bound.

Recall that $U \mapsto \pi$ in
$$G_{Y_N} \stackrel{\rho_{R^{ord,Y_N-new}}}{\longrightarrow}
 GL_2(\wfq[[U]]) \to GL_2\left(\wfq[[U]]/(p^N,U^{Ne})\right) \to GL_2\left(\mco/(\pi^n)\right).$$

Denote by $\rho_{g_{\pi},N}$ the deformation 
$$G_{Y_N} \to
 GL_2\left(R^{ord,Y_N-new}=\wfq[[U]]\right) \to GL_2\left(R^{ord,Y_N-new}_{2^*}\right)$$
$$ \to GL_2\left(\wfq[[U]]/(p^N,U^{Ne})\right)
\to GL_2\left(\wfq[[U]]/(p^N,g_{\pi}(U), U^{Ne})\right).$$
Let $\rho_{p,k}$ be the reduction of the deformation  
$G_{Y_N} \stackrel{\rho_{R^{ord,Y_N-new}}}{\longrightarrow}
GL_2\left(\wfq[[U]]/(p^N,U^{Ne})\right)$
mod $(p,U^k)$.

Note 
\begin{itemize}
\item $\dim H^1_{\mclt}(G_{Y_N},Ad\rhob)=1$ and this space has basis $\{f\}$,
\item $\dim H^1_{\mcl}(G_{Y_N},\ad)=1$ and this space has basis $\{f\}$,
\item $\dim H^1_{\mclt^{\perp}}(G_{Y_N},Ad\rhob^*)=0$ and
\item $\dim H^1_{\mcl^{\perp}}(G_{Y_N},\ad^*)=1$ and this space has some basis,
say $\{\phi\}$.
\end{itemize}

Let $Q$ be the set of primes $q$ satisfying
\begin{itemize}
\item $H^1(G_{Y_N},\ad)|_{G_q}=0$, 
\item  $q$ is $\rho_{g_{{\pi},N}}$-nice.
\item $\phi \mid_{G_q}\neq 0$, 
\item $\rho_{p,e+1}(Fr_q) =\left(\begin{array}{cc} q(1+U^e) & 0\\0 & 1-U^e\end{array}\right)$.
\end{itemize}
\begin{prop}\label{cheb2}

The Chebotarev conditions defining $Q$ above are  independent for $p \geq 3$.
\end{prop}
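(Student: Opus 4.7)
The plan is to interpret each of the four bullet points as a Chebotarev condition in some finite Galois extension of $\QQ$ and to show that the conjunction cuts out a nonempty union of Frobenius conjugacy classes in the compositum, so that infinitely many primes $q$ satisfy all four simultaneously. The argument parallels Lemma~\ref{chebs} and the Chebotarev step in the proof of Lemma~\ref{YN}; the new feature is the explicit matrix value demanded at $Fr_q$ in the fourth bullet.

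The first step is to separate the $\phi$-condition from the others. Writing $\phi = \phi_{\ad^*} + \phi_{\FF_{p^f}(1)}$ and invoking Proposition~\ref{sub}, one reduces condition~3 to a splitting condition in an $\ad^*$-type extension of $\QQ(\rhob,\mu_p)$. The remaining three conditions all concern extensions whose Jordan--H\"older constituents are $\ad$ or $\FF_{p^f}$, so by the non-isomorphism of $\ad$ and $\ad^*$ as $\mathrm{Gal}(\QQ(\rhob)/\QQ)$-modules (as in Lemma~\ref{chebs}), the field cut out by $\phi$ is linearly disjoint over $\QQ(\rhob,\mu_p)$ from the compositum of the fields for conditions~1, 2, and~4. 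Hence condition~3 can be imposed independently.

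The second step is to verify that conditions~2 and~4 are compatible. Both concern the image of the universal $\rho:G_{Y_N}\to GL_2(\wfq[[U]])$ viewed in two different quotients: condition~4 pins $\rho_{p,e+1}(Fr_q)$ to the explicit diagonal matrix $M := \mathrm{diag}(q(1+U^e),1-U^e)$, while condition~2 requires $\rho_{g_\pi,N}(Fr_q)$ to be nice in the sense of Definition~\ref{np}. The two target rings share the further reduction $\wfq[[U]]/(p,U^e)\simeq\FF_{p^f}[U]/(U^e)$, and both $M$ and any nice element reduce there to $\mathrm{diag}(q,1)$; combined with the fullness of the image of $\rho_N$ (established in Lemma~\ref{YN} using Lemma~\ref{Boston}), a Chinese-Remainder argument produces an element of the image of $\rho$ realizing both prescribed reductions. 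Raising to a large $p$-power, as in Proposition~\ref{np2}, then secures the prime-to-$p$ order requirement of niceness without altering the mod-$(p,U^{e+1})$ picture, since the kernel of $\rho_{g_\pi,N}$ above the common quotient is pro-$p$ and $M$ itself is tame modulo $(p,U^e)$.

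The third step, and the main obstacle, is to attach condition~1 compatibly. The space $H^1(G_{Y_N},\ad)$ may contain classes beyond the tangent-space generator $f$, and condition~1 demands complete splitting of $q$ in the compositum $F_1$ of the fields cut out by all such classes. The key observation is that, since $\mco/\ZZ_p$ is ramified (so $e\geq 2$), the matrix $M$ reduces trivially modulo $(p,U^2)$, so condition~4 automatically forces $f|_{G_q}=0$. For the remaining classes in $H^1(G_{Y_N},\ad)/\FF_{p^f}\cdot f$ one argues, by adapting Lemma~\ref{shekharslemma} to $\rho_N$ exactly as was done in Lemma~\ref{YN}, that the subfield of $F_1$ they cut out is linearly disjoint over $\QQ(\rhob)$ from the field carved out by $\rho_{g_\pi,N}$ and $\rho_{p,e+1}$: a nontrivial intersection would produce a linear dependence among the chosen basis of $H^1(G_{Y_N},\ad)$ that contradicts the basis properties coming out of Lemma~\ref{YN} together with Proposition~\ref{sub}. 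Chebotarev density then furnishes infinitely many $q\in Q$.
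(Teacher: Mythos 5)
Your overall architecture tracks the paper's: separate the $Ad\rhobar^*$-type condition (bullet three) from the three $Ad\rhobar$-type conditions, then argue compatibility of the first, second, and fourth conditions among themselves. Your Steps 1 and 3 are close in spirit to the paper's treatment. The problem is in Step 2, and it is a genuine error rather than a stylistic difference.

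The paper's argument for compatibility of conditions two and four is concrete: it produces the explicit element $\mathrm{diag}\left(1+g_{\pi}(U),(1+g_{\pi}(U))^{-1}\right)$ in the image of the universal deformation (via fullness and Boston's lemma), observes that this element dies under $\rho_{g_{\pi},N}$ (since $g_{\pi}\equiv 0$ there) but maps to $\mathrm{diag}(1+U^e,1-U^e)$ under $\rho_{p,e+1}$ (since $g_{\pi}\equiv U^e$ mod $(p,U^{e+1})$), and concludes $K_{g_{\pi},N}\not\supseteq K_{e+1}$, hence the fields are linearly disjoint over $K_e$. Your "Chinese-Remainder" framing is, in principle, a repackaging of this same fact — the underlying content is that $(g_{\pi})+(p,U^{e+1})=(p,U^e)$ in $\wfq[[U]]/(p^N,U^{Ne})$, equivalently that the two kernels together generate the kernel to the common quotient — but you do not actually verify this; the appeal to fullness alone does not furnish it. Fullness gives you $SL_2$ of the big ring in the image; you still must exhibit why that forces the image in the product of the two small rings to be the full fiber product, and the only input available is precisely the ideal identity above, i.e. the paper's observation.

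More seriously, the final sentence of your Step 2 is false. You claim that raising a representative Frobenius to a large $p$-power "secures the prime-to-$p$ order requirement of niceness without altering the mod-$(p,U^{e+1})$ picture." But the kernel of $GL_2\bigl(\FF_{p^f}[U]/(U^{e+1})\bigr)\to GL_2\bigl(\FF_{p^f}[U]/(U^{e})\bigr)$ is a $p$-group just as much as the corresponding kernel on the $\rho_{g_{\pi},N}$ side, and the unipotent factor $\mathrm{diag}(1+U^e,1-U^e)$ of $M$ satisfies $(1\pm U^e)^p\equiv 1$ mod $(p,U^{e+1})$, so it has order exactly $p$. Hence $M^{p^r}=\mathrm{diag}(q^{p^r},1)$ for any $r\geq 1$, which is not $M$. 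Raising to a $p$-power kills the very unipotent part that the fourth Chebotarev condition is designed to prescribe, so your modification destroys condition four even as it arranges condition two. The fix is to dispense with the $p$-power step entirely: once linear disjointness over $K_e$ is established (by the paper's explicit element, or by verifying the ideal identity), one simply imposes the Teichm\"uller (tame) value on the $K_{g_{\pi},N}$ side and the matrix $M$ on the $K_{e+1}$ side, both restricting to $\mathrm{diag}(q,1)$ over $K_e$; Chebotarev in the compositum then does the rest with no further adjustment.
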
 
\begin{proof}

The first two conditions are complete splitting conditions in fields 
above $\QQ(\rhob)$
and can therefore be satisfied simultaneously.
They are both $Ad\rhob$ conditions and thus independent of the third condition, an $Ad\rhob^*$ condition.
It remains to show the independence of the fourth condition from the previous three.
Since it is a succession of $Ad\rhob$ conditions, we only have to check independence with the first two
conditions and, since  $e>1$, independence with the first condition follows from
Lemma~\ref{shekharslemma}.

Finally we  show the independence of the fourth and second conditions.
Let $K_{g_{{\pi},N}}, K_e$ and $K_{e+1}$ be the fields fixed by the kernels of 
$\rho_{g_{{\pi},N}}$,
$\rho_{p,e}$ and $\rho_{p,e+1}$ respectively.

\vskip1em
\hspace{5cm}
\begin{tikzpicture}[description/.style={fill=white,inner sep=2pt}]
\matrix (m) [matrix of math nodes, row sep=3em,
column sep=2.5em, text height=1.5ex, text depth=0.25ex]
{ K_{g_{\pi,N}}&  & K_{e+1}\\
& K_e & \\
 &\QQ(\rhob) & \\ 
& \QQ & \\};
\path (m-3-2) edge  (m-4-2);
\path (m-3-2) edge  (m-2-2);
\path(m-2-2) edge   (m-1-1);
\path(m-2-2) edge   (m-1-3);
\end{tikzpicture}
\newline\noindent
As $g_{\pi}(U)$ is distinguished of degree $e$,
$K_{g_{\pi,N}} \supset K_e$. 
We'll show $K_{{g_{{\pi},N}}} \not \supseteq
K_{e+1}$.   The same argument given in the proof of Lemma~\ref{YN}
implies $\rho_{R^{ord,Y_N-new}_{2^*}} \mod (p,U)^2$
has full image.  Lemma~\ref{Boston} then implies
the image of $\rho_{R^{ord,Y_N-new}_{2^*}} \mod (p^N,U^{Ne})$ contains
$\left(\begin{array}{cc} 1+g_{\pi}(U) &0\\0 & (1+g_{\pi}(U))^{-1}\end{array}\right)$.
When we reduce mod $(p^N,g_{\pi}(U),U^{Ne})$ to $\rho_{_{g_{{\pi},N}}}$ this element becomes trivial.
But when we reduce mod $(p,U^{e+1})$ to $\rho_{p,e+1}$, bearing in mind that
$g_{\pi}(U) \equiv U^e \mod p$, the image is 
$\left(\begin{array}{cc} 1+U^e & 0\\0 & 1-U^e\end{array}\right)$.
So $K_{{g_{{\pi},N}}} \not \supseteq
K_{e+1}$. Thus $K_{g_{{\pi},N}}$ and $K_{e+1}$ are linearly disjoint over $K_e$.
The  second condition is a complete splitting condition in $K_{g_{{\pi},N}}$ while
the fourth is a complete splitting condition in $K_e$, but {\bf not} in $K_{e+1}$.
\end{proof}

\subsection{Proof of Theorem~\ref{hard}}
Choose $q_1 \in Q$. Part 4) of Proposition~\ref{nice}, using
the first and third bullet points on $q_1$, implies
$$\dim H^1_{\mcl}(G_{ Y_N \cup \{q_1\}},\ad)=1 = \dim H^1_{\mclt^{\perp}}(G_{ Y_N \cup \{q_1\}},\ad^*)$$
and $ H^1_{\mcl}(G_{ Y_N \cup \{q_1\}},\ad)$ is spanned by $\{f\}$ and
$H^1_{\mcl^{\perp}}(G_{Y \cup \{q_1\} },\ad^*)$ 
is spanned by some $\{\tilde \phi\}$ ramified at $q_1$.
Thus $R^{ord,Y_N \cup \{q_1\}-new}_{2^*}$
is a quotient of
$\wfq[[U]]$ with one dimensional tangent space.

By part 1) of Proposition~\ref{nicefull} there are two possibilities:
\begin{itemize}
\item $\dim H^1_{\mclt}(G_{Y_N \cup \{q_1\} },Ad\rhob) =1$ or
\item $\dim H^1_{\mclt}(G_{Y_N \cup \{q_1\} },Ad\rhob) =2$.
\end{itemize}
In the first case, $R^{ord,Y_N\cup \{q_1\}-new} \simeq\wfq[[U]]$
and its weight $2^*$ quotient is formed by quotienting by the 
one determinant relation $v_{2^*,N}(U)$ which we can assume
is a distinguished polynomial  by Lemma~\ref{disting}. In the latter case it is possible that
$R^{ord,Y_N\cup \{q_1\}-new}_{2^*}$ is a quotient of $\wfq[[U]]$ by either multiple relations
or that it might not be finite and flat over $\wfq$. We will deal with this case by adding another prime
of ramification. 

While each $q_1 \in Q$ puts us in one of the two cases above, it is an open (and difficult!)
question if both cases can occur. Part of the length of the argument below is because of this.
That we do not know whether
we have to allow ramification at one or two nice primes to remove obstructions to deformation
problems here and in \cite{KLR} is the same phenomenon.

\subsubsection{Case 1}

In the  first case we have deformations associated to the ring homomorphisms
$$\wfq[[U]] \simeq  R^{ord,Y_N \cup \{q_1\}-new}  \twoheadrightarrow
R^{ord,Y_N \cup \{q_1\}-new}_{2^*} 
\simeq \wfq[[U]]/\left(v_{2^*,N}(U)\right) \twoheadrightarrow
\wfq[[U]]/\left(p^N,g_{\pi}(U),U^{Ne}\right).$$
The last surjection above implies 
$v_{2^*,N}(U) \in (p^N,g_{\pi}(U), U^{Ne})$ so its
degree  is
at least $e$. 
We claim it  is exactly $e$. 

If the degree is greater than $e$, then $R^{ord,Y_N \cup \{q_1\}-new}_{2^*} \twoheadrightarrow
\FF_{p^f}[[U]]/(U^{e+1})$. 
Call the corresponding  deformation $\alpha$ and let $\beta$ be the deformation induced by
the composite
$$R^{ord,Y_N-new}_2\twoheadrightarrow 
\wfq[[U]]/\left(p^N,U^{Ne}\right) \twoheadrightarrow
\FF_{p^f}[[U]]/(U^{Ne}) \twoheadrightarrow
\FF_{p^f}[[U]]/(U^{e+1}).$$ 
Note $\alpha\mid_{G_{q_1}} \in {\mathcal C}_{q_1}$ and 
$$\beta(Fr_{q_1}) = \left(\begin{array}{cc} q_1(1+U^e)&0\\0&1-U^e \end{array}\right) 
\implies \beta\mid_{G_{q_1}}
\notin {\mathcal C}_{q_1},$$
that is $\beta$ is not Steinberg at $q_1$.
As both $\alpha$ and $\beta$ are deformations of $\rho_{g_{\pi,N}}$ mod $p$ to 
$GL_2\left(\FF_{p^f}[[U]]/(U^{e+1})\right)$ they differ
by a $1$-cohomology class $k \in H^1(G_{Y \cup \{q_1\} },\ad)$,
that is $$\alpha= (I+U^ek)\beta.$$
If $k$ is unramified at $q_1$,
then $k$ inflates from $H^1(G_{Y_N},\ad)$. But $q_1$ was chosen so
$H^1(G_{Y_N},\ad)\mid_{G_{q_1}} =0$. Thus $k$ cannot
change the local at $q_1$ deformation where
$\beta(Fr_{q_1})= \left(\begin{array}{cc} q_1(1+U^e)&0\\0&1-U^e \end{array}\right)$
to one in ${\mathcal C}_{q_1}$. So $k$ is ramified at $q_1$.
But we chose $q_1$ such that $\phi\mid_{G_{q_1}} \neq 0$ where $\phi$ spanned
$H^1_{\mcl^{\perp}}(G_{Y_N},\ad^*)$.
Parts 1) and 2) of Proposition~\ref{nice} then imply  the
map 
$$H^1(G_{Y_N \cup \{q_1\}},\ad) \to\oplus_{v \in Y_N} \frac{H^1(G_v,\ad)}{\mathcal L_v}$$
has image one dimension larger than the map
$$H^1(G_{Y_N},\ad) \to\oplus_{v \in Y_N} \frac{H^1(G_v,\ad)}{\mathcal L_v}.$$
For all $v \in Y_N$ we have $\alpha\mid_{G_v}$ belongs to our deformable class ${\mathcal C}_v$
as does
$\beta\mid_{G_v}$. But for at least one $v$ we have $k_v \mid_{G_v} \notin \mathcal L_v$
so $\alpha\mid_{G_v}=(I+U^ek)\beta \mid_{G_v} \notin {\mathcal C}_v$, a contradiction.  
Thus $k$ can be neither ramified nor unramified at $q_1$. This contradiction implies
$v_{2^*,N}(U)$ has degree $e$ mod $p$.

Recall $v_{2^*,N}(U) \in (p^N,g_{\pi}(U),U^{Ne})$ so
\begin{equation}\label{eq:gpi}
v_{2^*,N}(U) =a(U)p^N +b(U)g_{\pi}(U)+c(U)U^{Ne}.
\end{equation}
Recall that $g_{\pi}(U)$ is the minimal polynomial over $W(\FF_{p^f})$ of $\pi$ and that 
its roots are distinct.
Since both $g_{\pi}(U)$ and $v_{2^*,N}(U)$ are degree
$e$,  $b(U)$ is a unit. 
Let $\{y_{N,1},y_{N,2},\dots,y_{N,e}\}$
be the roots of $v_{2^*,N}(U)$. As $v_{2^*,N}(U)$ is distinguished of degree $e$, $v_p(y_{N,i}) \geq 1/e.$
Observe
$$0=v_{2^*,N}(y_{N,i}) = p^Na(y_{N,i}) +b(y_{N,i})g_{\pi}(y_{N,i}) +c(y_{N,i})y^{Ne}_{N,i}.$$
The outside terms on the right  have valuation at least $N$ and $b(y_{N,i})$ is a unit,
so $v_p(g_{\pi}(y_{N,i})) \geq N$.
Thus  $y_{N,i}$ is very close to a root of $g_{\pi}(U)$. 
For $N$ large enough, this closeness
is closer than the common Krasner bound $C$ on the roots of $g_{\pi}(U)$.
We claim each $y_{N,i}$ is close to a different root of $g_{\pi}(U)$.  If this were false then
a root of $g_{\pi}$ would be missed, that is 
there would be a root $x_0$ of $g_{\pi}(U)$ with $|x_0-y_{N,i}| > C$ for all $i$.
As $v_{2^*,N}(U) = \prod (U-y_{N,i})$, we would have $|v_{2^*,N}(x_0)| > C^e$.
Evaluating ~\eqref{eq:gpi} at $x_0$ gives $|v_{2^*,N}(x_0)| < p^{-N}$, a contradiction for large $N$ so
the claim is true.
After relabelling,
we may assume $y_{N,1}$ is close to $\pi$.

The composite deformations corresponding to
$$\wfq[[U]]\simeq
 R^{ord,Y_N \cup \{q_1\}-new}\twoheadrightarrow R^{ord,Y_N \cup \{q_1\}-new}_{2^*}
\twoheadrightarrow \wfq[[U]]/(p^N,g_{\pi}(U),U^{Ne})$$
and
$$\wfq[[U]] \simeq
R^{ord,Y_N -new} \twoheadrightarrow R^{ord,Y_N -new}_{2^*}
\twoheadrightarrow \wfq[[U]]/(p^N,g_{\pi}(U),U^{Ne})$$
are the same as the latter is nice at $q_1$.
We know $U \mapsto \pi$ in the latter to give  $\rho_n$
so sending $U$ to $\pi$ in the former gives $\rho_n$ as well.
As $y_{N,1}$ is  close enough to $\pi$, Krasner's lemma implies
$\wfq[\frac1p](\pi) \subset \wfq[\frac1p](y_{N,1})$. 
As  $\left[\wfq[\frac1p](y_{N,1}):\wfq[\frac1p]\right] \leq \mbox{deg} (v_{2^*,N}(U)) =e$, 
the fields $\wfq[\frac1p](y_{N,1})$ and
$\wfq[\frac1p](\pi)$
are equal. Recall that $C$ is smaller than both $|\pi^n|$ and the Krasner bound on the roots of $g_{\pi}(U)$.
We  chose $N$ large enough so that
$|y_{N,1}-\pi| < C < |\pi^n| $, so sending $U$ to $y_{N,1}$ in the former sequence gives $\rho_n$ as well.
As $R^{ord,Y_N\cup\{q_1\}-new}_{2^*} \simeq \wfq[[U]]/(v_{2^*,N}(U))$, we see $\rho_n$ lifts
to an $\mco$-valued weight $2$ Galois representation.
This proves Theorem~\ref{hard} in the case where we assumed
that 
$\dim H^1_{\mcl}(G_{Y_N \cup \{q_1\} },Ad\rhob)=1$ which implied
$R^{ord,Y_N \cup \{q_1\}-new} \simeq \wfq[[U]]$. In this case we set $T_2 = T \cup \{q_1\}$.

\subsubsection{Case 2}

We now deal with the second more involved case.
Had we allowed ourselves standard `$R=T$' theorems, 
then we could assume $R^{Y_N \cup \{q_1\}}_2$ is a finite
flat complete intersection and proved the second half of Theorem~\ref{hard},
$$R^{ord, Y_N \cup \{q_1\}-new} \twoheadrightarrow \mco \twoheadrightarrow \mco/(\pi^n)$$
exactly as in case 1). The arbitrary weight ordinary ring might still be a quotient of a power series ring in two variables.

We have:
\begin{itemize}
\item $\dim H^1_{\mclt}(G_{Y_N \cup \{q_1\} },Ad\rhob)=2$, a basis for this 
         space is $\{f,h\}$ where $h \in H^1(G_{Y \cup \{q_1\} },Ad\rhob) \backslash H^1(G_{Y_N  },\ad)$ and 
         is ramified at $q_1$,
\item $\dim H^1_{\mclt^{\perp}}(G_{Y_N \cup \{q_1\} },Ad\rhob^*)=1$ and a basis for this space is $\{\psi\}$ and
         $\psi$ is ramified at $q_1$,
\item $\dim H^1_{\mathcal L}(G_{Y_N \cup \{q_1\} },\ad)=1$ and  a basis for this 
         space is $\{f\}$,
\item $\dim H^1_{{\mathcal L}^{\perp}}(G_{Y_N \cup \{q_1\} },\ad^*)=1$ and a basis for this 
         space is $\{\tilde \phi\}$ where $\{\phi,\tilde \phi\}$ is independent and $\tilde\phi$ is ramified at $q_1$.
         Recall $\{\phi\}$ formed a basis of $H^1_{\mclt^{\perp}}(G_{Y_N},Ad\rhob^*)$ and $\phi \mid_{G_{q_1}} \neq 0$ implies
         $\phi \mid_{G_{q_1}} \notin {\mcl}^{\perp}_{q_1}.$
\end{itemize}

We chose $q_1$ to satisfy Proposition~\ref{cheb2}.
Now choose a second prime $q_2$ such that

\begin{itemize}
\item 
          $h_{\ad} \neq 0$ and $h_{sc} =0$ so $h\mid_{G_{q_2}} \notin \mclt_{q_2}$,
\item $\psi,\tilde\phi \mid_{G_{q_2}} \neq 0$,
\item $H^1(G_{Y_N},\ad^*) \mid_{G_{q_2}}=0$,
\item $ H^1(G_{Y_N},\ad)\mid_{G_{q_2}} =0$,
\item  $q_2$ is nice for  $\rho_R:G_{Y_N}\to GL_2\left(\wfq[[U]]\right) \to GL_2\left(\wfq[[U]]/(p^N,U^{Ne})\right)$.
\end{itemize}

We have already remarked that the set $\{\phi,\tilde \phi\}$ is independent.
So is $\{\psi,\phi\}$ as  $\psi$ is ramified at $q_1$ but $\phi $ is not.
If $\psi$ is not in the span of $\{\phi,\tilde\phi\}$, then the second
and third bullet points are independent Chebotarev conditions.
If $\psi$ is  in the span of $\{\phi,\tilde\phi\}$, then choosing $q_2$
so that $\phi \mid_{G_{q_2}} = 0$ and $\tilde \phi \mid_{G_{q_2}}  \neq 0$ implies
$\psi \mid_{G_{q_2}}  \neq 0$. The above bullet points give independent Chebotarev conditions.

Set $T_2 =Y_N \cup \{q_1,q_2\}$.
Then, by our choice of $q_2$ and part 2) of Proposition~\ref{nicefull}
\begin{itemize}
\item $\dim H^1_{\mclt}(G_{T_2},Ad\rhob)=1$ and a basis for this 
         space is $\{f\}$,
\item $\dim H^1_{\mclt^{\perp}}(G_{T_2},Ad\rhob^*)=0$,
\item $\dim H^1_{\mathcal L}(G_{T_2},\ad)=1$ and  a basis for this 
         space is $\{f\}$,
\item $\dim H^1_{\mathcal L}(G_{T_2},\ad^*)=1$ and a basis for this 
         space is $\left\{\tilde{\tilde \phi}\right\}$.
\end{itemize}

We have that $R^{ord, T_2-new} \simeq \wfq[[U]]$ and again  by \cite{DT}
this is a Hida family. Its weight $2$ quotient is the middle term of
\begin{equation}\label{eq:ku}
\wfq[[U]]=R^{ord, T_2-new} \twoheadrightarrow R^{ord, T_2 -new}_{2^*}
=\wfq[[U]]/(m_{2^*,N}(U)) \twoheadrightarrow \wfq[[U]]/(p^N,g_{\pi}(U),U^{Ne})
\end{equation}
where $m_{2^*,N}(U) \in (p^N,g_{\pi}(U), U^{Ne})$ is a distinguished polynomial of degree at least $e$. Thus
$$R^{ord, T_2-new}_{2^*}
=\wfq[[U]]/(m_N(U)) \twoheadrightarrow \FF_{p^f}[[U]]/(U^e)$$ 
as both $q_1$ and $q_2$ were chosen
to be nice for this representation to $GL_2\left( \FF_{p^f}[[U]]/(U^e) \right)$, 
which is unramified outside $Y_N$. 
We prove by contradiction that deg$(m_N(U))=e$ by contradiction.
  Suppose degree$(m_{2^*,N}(U)) >e$.
Then we have  deformations
$$\alpha:G_{T_2} \to GL_2(\FF_{p^f}[[U]]/(U^{e+1})),\,\,\,
\beta:G_{Y_N} \to GL_2(\FF_{p^f}[[U]]/(U^{e+1}))$$ 
where $\alpha = \beta$ mod $U^e$ so they differ by a $1$-cohomology class.
We write this class as $xh_0+yh_{q_1}+zh_{q_2}$
where $h_0 \in H^1(G_{Y_N},\ad)$, $h_{q_i} \in H^1(G_{Y_N \cup \{q_i\}},\ad)$ for
$i=1,2$.
We assume $h_{q_i}$ is ramified
at $q_i$ for $i=1,2$.
Note $\alpha,\beta \mid_{G_v} \in {\mathcal C}_v$
for all $v \in Y_N$.

Since $\phi\mid_{G_{q_1}} \neq 0$, $\tilde{\phi}\mid_{G_{q_2}} \neq 0$ parts 1) and 2) 
of Proposition~\ref{nice} imply
the maps
\begin{equation}\label{eq:qyn}
H^1(G_{Y_N \cup \{q_1\}},\ad) \to \oplus_{v \in Y_N} \frac{H^1(G_v,\ad)}{\mathcal L_v}
\end{equation}
and
$$H^1(G_{Y_N \cup \{q_1,q_2\} },\ad) \to \oplus_{v \in Y_N\cup \{q_1\}} \frac{H^1(G_v,\ad)}{\mathcal L_v}$$
have images one dimension larger than
\begin{equation}\label{eq:zq2}
H^1(G_{Y_N },\ad) \to \oplus_{v \in Y_N} \frac{H^1(G_v,\ad)}{\mathcal L_v}
\end{equation}
and
$$H^1(G_{Y_N\cup \{q_1\}},\ad) \to \oplus_{v \in Y_N\cup \{q_1\}} \frac{H^1(G_v,\ad)}{\mathcal L_v}$$
respectively. So successively allowing ramification at  $q_1$ 
and then $q_2$ makes the Selmer maps more surjective by one dimension at each stage.
But as $H^1(G_{Y_N},\ad^*) \mid_{G_{q_2}} =0$, part 5) of Proposition~\ref{nice} again gives that the map
\begin{equation}\label{eq:ynq2}
H^1(G_{Y_N \cup \{q_2\}},\ad) \to \oplus_{v \in Y_N} \frac{H^1(G_v,\ad)}{\mathcal L_v}
\end{equation}
has the same image as
\begin{equation}\label{eq:yn}
H^1(G_{Y_N},\ad) \to \oplus_{v \in Y_N} \frac{H^1(G_v,\ad)}{\mathcal L_v}.
\end{equation}
If $h_{q_2} \mid _{G_{q_2}} 
\in {\mathcal L}_{q_2}$, then
allowing ramification at $q_1$ would have to add $2$ dimensions of surjectivity
to the map
$$H^1(G_{Y_N\cup\{q_1,q_2\}},\ad) \to \oplus_{v \in Y_N\cup \{q_1,q_2\}} \frac{H^1(G_v,\ad)}{\mathcal L_v}$$
compared to the map
$$H^1(G_{Y_N \cup q_2},\ad) \to \oplus_{v \in Y_N \cup q_2} \frac{H^1(G_v,\ad)}{\mathcal L_v}.$$
This is impossible by part 1) of Proposition~\ref{nice}
so $h_{q_2} \mid_{G_{q_2}} \notin {\mathcal L}_{q_2}$. (This idea was used already
in \cite{LR}).

So when we move from $\beta$ to $\alpha$ in the equation
$$\alpha = \left(I+( xh+yh_{q_1}+zh_{q_2}  )U^e\right)\beta,$$
what happens at $v \in Y_N$? 
The image of $h_{q_2}$ under
~\eqref{eq:ynq2} was already in the image of ~\eqref{eq:yn}. From ~\eqref{eq:zq2}, the image of $h_{q_1}$
lies outside this common image of ~\eqref{eq:ynq2} and~\eqref{eq:yn}.
Thus if $y \neq 0$ then for some $v_0 \in Y_N$ we have
$(xh+yh_{q_1}+zh_{q_2}) \mid_{G_{v_0}} \notin \mcl_{v_0}$
so either
$\alpha \mid_{G_{v_0}}\notin {\mathcal C}_{v_0}$
or $\beta \mid_{G_{v_0}}\notin {\mathcal C}_{v_0}$,
 a contradiction.
 Thus $y=0$. 

Now we consider properties at $G_{q_2}$. 
By the choice of $q_2$, both $\alpha,\beta \mid_{G_{q_2}} \in {\mathcal C}_{q_2}$
so  $(xh_0+zh_{q_2}) \mid_{G_{q_2}} \in {\mathcal L}_{q_2}$. But the choice of $q_2$
forces $h_0\mid_{G_{q_2}}=0$ and we showed that $h_{q_2} \notin {\mathcal L}_{q_2}$ so $z=0$.

Finally, we look locally at $q_1$. 
Recall $\beta(Fr_{q_1}) =\left(\begin{array}{cc} q_1(1+U^e) & 0 \\
0 & (1+U^e)^{-1}\end{array}
\right)$ so $\beta \notin {\mathcal C}_{q_1}$.
But $h_0\mid_{G_{q_1}}=0$ so $h_0$ cannot move $\beta$ to $\mathcal C_{q_1}$. We have a contradiction so $m_N(U)$ has degree
$e$. From ~\eqref{eq:ku} $m_N(U) \in (p^N,g_{\pi}(U),U^{Ne})$. Now simply proceed 
as in the Case $1$ with
$N$  suitably large.
\hfill$\square$

\subsection{Further corollaries}

\begin{cor} \label{weight} Let $\rhob:\gal \to GL_2(\FF_{p^f})$ be odd, full, ordinary, weight $2$ and have determinant $\eps$. 
 Let $\mco$ be any  totally ramified extension of $\wfq$.
There exists a set of primes $Y \supset S_0$ such that 
$$\wfq[[U]] \simeq R^{ord,Y} \twoheadrightarrow R^{ord,Y-new}_{2^*} \simeq \wfq[[U]]/(h(U))
\simeq \mco.$$ The degree of the map to weight space along the Hida family
$R^{ord,Y-new}$ is $[\mco:\wfq]$
when $\rhob$ is as in cases $1$), $2$) and $4$) of Proposition~\ref{vequalsp}.
In the other cases, the degree is strictly greater than $[\mco:\wfq]$.
There exists a weight $2$ form associated to $\rhob$ whose 
completed field of Fourier coefficients has ring of integers $\mco$.
\end{cor}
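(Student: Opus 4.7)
The plan is to apply Theorem \ref{hard} with $n = 1$ and $\rho_n = \rhob$. Since $\rhob$ is by hypothesis odd, ordinary, weight $2$, modular, full, and of determinant $\eps$, the main hypotheses on $\rho_n$ are automatic. The condition $\rhob\mid_{G_v} \in \mathcal{C}_v$ for $v \in S_0$ is arranged by choosing smooth quotients of the local deformation rings that contain the trivial lift; in cases 3) and 5) of Proposition \ref{vequalsp} we fix a preferred flat or semistable family as $\mathcal{C}_p$. Crucially, for $\rho_n = \rhob$ the ramification set $S$ of $\rho_n$ equals $S_0$, so the balancedness condition at primes $v \in S \setminus S_0$ is vacuous, and the flat/semistable restrictions on $\rho_n\mid_{G_p}$ imposed in Section \ref{one} reduce to trivial conditions on $\rhob\mid_{G_p}$ itself.

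Theorem \ref{hard} then supplies a finite set of primes $Y := T_2 \supset S_0$ together with the chain
$$\wfq[[U]] \simeq R^{ord,Y-new} \twoheadrightarrow R^{ord,Y-new}_{2^*} \simeq \mco,$$
where $R^{ord,Y-new}_{2^*}$ is the full weight $2$ quotient in cases 1), 2), 4), and the flat (or semistable) quotient of the weight $2$ ring in cases 3) and 5) according to the choice of $\mathcal{C}_p$. Writing the distinguished relation cutting out this quotient as $h(U)$ (Lemma \ref{disting}) gives $R^{ord,Y-new}_{2^*} \simeq \wfq[[U]]/(h(U))$, as asserted.

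For the degree of the Hida family $R^{ord,Y-new} \simeq \wfq[[U]]$ over weight space, this equals $\deg w_2(U)$, where $w_2(U)$ is the distinguished polynomial cutting out the full weight $2$ quotient $R^{ord,Y-new}_2$. In cases 1), 2), 4) one has $R^{ord,Y-new}_2 = R^{ord,Y-new}_{2^*} \simeq \mco$, so $\deg w_2 = [\mco:\wfq]$. In cases 3) and 5), Theorem \ref{hard} gives that $R^{ord,Y-new}_2$ strictly dominates its flat/semistable quotient $R^{ord,Y-new}_{2^*}$, so $w_2(U)$ is a proper multiple of $h(U)$, whence $\deg w_2 > [\mco:\wfq]$.

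For the concluding modularity statement, Theorem \ref{lundell} identifies $R^{ord,Y-new} \simeq \TT^{ord,Y-new}$ with the ordinary Hida Hecke algebra, so the surjection onto $\mco$ becomes a $\wfq$-algebra homomorphism from the appropriate weight $2$ new Hecke algebra to $\mco$. This corresponds via Hida theory to a classical weight $2$ newform $f$; its Fourier coefficients $a_\ell(f)$ are the images of $T_\ell$, which under $R \simeq T$ are traces of Frobenii of the universal specialisation, and by Chebotarev these generate $\mco$ as a $\ZZ_p$-algebra. Consequently, the $p$-adic completion of the field generated by the Fourier coefficients is $\mco[\tfrac{1}{p}]$, with ring of integers $\mco$. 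The main obstacle in executing this plan rigorously is bookkeeping: verifying that $n=1$, $\rho_n = \rhob$ is an admissible input to Theorem \ref{hard}, and tracking the distinction between $R^{ord,Y-new}_2$ and $R^{ord,Y-new}_{2^*}$ in cases 3) and 5) through the degree calculation.
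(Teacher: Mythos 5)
The key step of your proposal does not work. You apply Theorem~\ref{hard} directly to $\rho_1=\rhob$ (i.e.\ with $n=1$). But for $n=1$ the input $\rho_n$ is valued in $\mco/(\pi)=\FF_{p^f}$, which is the same for every totally ramified $\mco$: the representation $\rho_1$ carries no information about $e=[\mco:\wfq]$ or the uniformiser $\pi$, so there is no way for the machinery of Theorem~\ref{hard} to know which $\mco$ is to be produced. Concretely, several steps in the proof of Theorem~\ref{hard} require $n\geq 2$. The cohomology class $f$ that spans the tangent space is defined as the class of $\rho_n \bmod \pi^2$, and fullness of $\rho_n$ at the $\mco/(\pi^2)$ level is what guarantees $f\neq 0$ and, via Lemma~\ref{shekharslemma}, that $\dim H^1(\mathrm{Image}(\rho_n),\ad)=1$; for $n=1$ there is no mod-$\pi^2$ level and that argument degenerates. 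In Proposition~\ref{vli} the assertion that ``$U\mapsto \pi$'' in the composite $\wfq[[U]]\twoheadrightarrow \mco/(\pi^n)$ is derived precisely by contradicting fullness of $\rho_n\bmod\pi^2$ if $U$ mapped to a non-uniformiser; for $n=1$ the only surjection $\wfq[[U]]\twoheadrightarrow\FF_{p^f}$ kills $U$, so there is nothing to latch on to. The degree-$e$ minimal polynomial $g_\pi(U)$ never enters the construction, and the Krasner bookkeeping is vacuous.

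The paper's proof fixes exactly this: it first uses~\cite{R3} to produce a \emph{nontrivial} mod-$\pi^2$ deformation $\rho_2:\gal\to GL_2\bigl(\wfq[[U]]/(p,U^2)\bigr)\simeq GL_2(\mco/(\pi^2))$, whose nonzero tangent class forces full image (by Lemma~\ref{Boston}), and only then applies Theorem~\ref{hard} with $n=2$. It is the choice of $\rho_2$ in $\mco/(\pi^2)$ --- together with the minimal polynomial $g_\pi(U)$ of the uniformiser --- that feeds the target ring $\mco$ into the argument. Your observations about the degree over weight space and about extracting a newform via $R^{ord,Y-new}\simeq\T^{ord,Y-new}$ are correct and align with the paper, but they rest on having first obtained the chain $\wfq[[U]]\simeq R^{ord,Y-new}\twoheadrightarrow R^{ord,Y-new}_{2^*}\simeq\mco$, and the $n=1$ invocation does not produce it.
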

\begin{proof}
Use \cite{R3} to get a nontrivial weight $2$ deformation of $\rhob$ to 
$$\wfq[[U]]/(p,U^2) \simeq \mco/(\pi^2),$$  that is the corresponding cohomology class in this deformation
to the dual numbers is nonzero.
Now apply Theorem~\ref{hard}.
\end{proof}
\noindent

\begin{cor} \label{choosering}
Let  $\rhob:\gal \to GL_2(\FF_{p^f})$ be odd, full, ordinary, weight $2$ and have determinant $\eps$. 
Let $g(U) \in \wfq[U]$
be a distinguished  polynomial of degree $e$ with distinct roots and let $\epsilon >0$ be given. Then there exists a set 
$Y \supset S_0$  such that
$$\wfq[[U]] \simeq R^{ord,Y-new} \twoheadrightarrow R^{ord,Y -new}_{2^*} 
\simeq \wfq[[U]]/(w_{2^*}(U))$$ where 
$w_{2^*}(U)$ has degree $e$ and each
root of $w_{2^*}(U)$ is  within $\epsilon$ of a root of $g(U)$.
Furthermore if $g(U) = \prod g_i(U)$ where $g_i(U)$ is irreducible over $\wfq$ of degree $e_i$, then
$w_{2^*}(U) = \prod w_{2^*,i}(U)$ where $w_{2^*,i}(U)$ is irreducible over $\wfq$ of degree $e_i$ and
its roots are within $\epsilon$ of the roots of $g_i(U)$.
\end{cor}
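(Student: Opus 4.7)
The approach is a direct adaptation of the proof of Theorem~\ref{hard}, with the given distinguished polynomial $g(U)$ playing the role that the minimal polynomial $g_\pi(U)$ played there. Since we are not handed a prescribed $\rho_n$, I would first use the techniques of \cite{R3} (exactly as employed in the proof of Proposition~\ref{vli}) to produce, for a large parameter $N$ to be chosen below, a set $X_N \supset S_0$ and a full-image weight-$2$ ordinary deformation $\rho_N:\gal \to GL_2\bigl(\wfq[[U]]/(p^N,U^{Ne})\bigr)$ of $\rhob$ factoring through $R^{ord,X_N-new}_{2^*}$. Following Lemma~\ref{YN} I would then enlarge $X_N$ to $Y_N$ by adjoining nice primes that kill the dual Selmer group, yielding $R^{ord,Y_N-new} \simeq \wfq[[U]]$ together with a surjection
$$R^{ord,Y_N-new}_{2^*} = \wfq[[U]]/(w_{2^*,N}(U)) \twoheadrightarrow \wfq[[U]]/(p^N,U^{Ne}) \twoheadrightarrow \wfq[[U]]/(p^N,g(U),U^{Ne}).$$
Hence $w_{2^*,N}(U) \in (p^N,g(U),U^{Ne})$, and $\deg w_{2^*,N} \geq e$ since $w_{2^*,N}$ is distinguished by Lemma~\ref{disting}.

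To force $\deg w_{2^*,N} = e$, I would add a further nice prime $q_1$ as in Proposition~\ref{cheb2}, imposing in particular the local-at-$p$ Chebotarev condition $\rho_{p,e+1}(Fr_{q_1}) = \left(\begin{array}{cc} q_1(1+U^e) & 0\\ 0 & 1-U^e\end{array}\right)$ and niceness relative to the deformation into $GL_2\bigl(\wfq[[U]]/(p^N,g(U),U^{Ne})\bigr)$. The independence of these conditions follows exactly as in Proposition~\ref{cheb2}, since that proof uses $g_\pi$ only through the congruence $g_\pi(U) \equiv U^e \pmod p$, which holds for any distinguished $g$ of degree $e$. The Case~1/Case~2 dichotomy of the proof of Theorem~\ref{hard} then transfers verbatim, possibly requiring a second prime $q_2$, and produces a set $Y$ with $R^{ord,Y-new} \simeq \wfq[[U]]$ and $R^{ord,Y-new}_{2^*} \simeq \wfq[[U]]/(w_{2^*}(U))$, where $w_{2^*}$ is a distinguished polynomial of degree exactly $e$ lying in $(p^N,g(U),U^{Ne})$.

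The Krasner argument from Theorem~\ref{hard} now applies without change: writing $w_{2^*}(U) = a(U)p^N + b(U)g(U) + c(U)U^{Ne}$ with $b(U)$ a unit (since $g$ and $w_{2^*}$ are both distinguished of degree $e$), evaluation at a root $y_i$ of $w_{2^*}$ gives $v_p(g(y_i)) \geq N$. For $N$ sufficiently large---larger than the Krasner bound governed by $\epsilon$, by the minimum pairwise distance between roots of $g$, and by the minimum distance between roots of distinct irreducible factors $g_i,g_j$---each $y_i$ lies within $\epsilon$ of a unique root of $g$, and the pigeonhole argument of Theorem~\ref{hard} shows each root of $g$ has exactly one $y_i$ near it. For the factorization statement, with $N$ so chosen, the roots of $w_{2^*}$ partition into disjoint clusters, one per $g_i$; each cluster is Galois-stable because the absolute Galois group of $\wfq[\tfrac1p]$ acts on the roots preserving $p$-adic distance. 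This yields $w_{2^*}(U) = \prod w_{2^*,i}(U)$ in $\wfq[U]$ with $\deg w_{2^*,i} = e_i$. Irreducibility of $w_{2^*,i}$ over $\wfq$ follows from Krasner's lemma at a single root: any root $y$ of $w_{2^*,i}$ generates the same extension of $\wfq[\tfrac1p]$ as the nearby root of $g_i$, which has degree $e_i$.

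The main obstacle is verifying that Proposition~\ref{cheb2} and the subsequent Case~1/Case~2 analysis of Theorem~\ref{hard} transfer to the setting of arbitrary distinguished $g$ in place of the specific $g_\pi$; as noted, since those arguments depend on $g_\pi$ only via its distinguishedness, the adaptation is direct, and no genuinely new input is required beyond the Krasner/Galois bookkeeping for the factorization.
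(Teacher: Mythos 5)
Your proposal is correct and follows essentially the same route as the paper: produce an initial nontrivial mod-$(p,U^2)$ deformation via \cite{R3}, enlarge to a weight-$2$ quotient surjecting onto $\wfq[[U]]/(p^N,U^{Ne})$ by Proposition~\ref{vli}, kill the dual Selmer group as in Lemma~\ref{YN}, then use the $q_1$ (and possibly $q_2$) construction of Proposition~\ref{cheb2} and the Case~1/Case~2 analysis of Theorem~\ref{hard} to pin the degree to $e$, followed by the Krasner and Galois-equivariance arguments. The only stylistic difference is at the very end: the paper argues directly that the map $s_{ij} \mapsto \sigma(s_{ij}) = s_{ik}$ shows the clusters are single Galois \emph{orbits}, which yields both the factorization and irreducibility in one step, whereas you first deduce Galois-\emph{stability} (hence a factorization over $\wfq$) and then invoke Krasner's lemma separately for irreducibility of each factor; both are valid and amount to the same observation.
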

\begin{proof} 
First choose $\epsilon$ to be less than half the distance between any pair of roots
of $g(U)$.
Use \cite{R3} to get a nontrivial weight $2$ deformation of $\rhob$ to 
$\wfq[[U]]/(p,U^2)$. Now proceed as in Proposition~\ref{vli} to get a weight $2$ deformation ring 
surjecting onto $\wfq[[U]]/(p^N,U^{Ne})$ with one dimensional tangent
space. Then add more primes so that
the ordinary ring is $\wfq[[U]]$ and its weight $2$ quotient is
$\wfq[[U]]/(w_{2^*}(U))$ where $w_{2^*}(U) \in (p^N,g(U),U^{Ne})$ is degree $e$. 
The argument in case 1) of Theorem~\ref{hard} implies that for
 $N$ large enough,
each root of $w_{2^*}(U)$ is within $\epsilon$ of a distinct root of $g(U)$. By the choice of $\epsilon$ the roots of $w_{2^*}(U)$ 
are distinct. As $g_i(U)$ is a degree $e_i$ irreducible factor of $g(U)$, let 
$\{r_{i1},\dots,r_{ie_i}\}$ be its roots and we know $|r_{ij} -s_{ij}| < \epsilon$ where $s_{ij}$ is a root of $w_{2^*}(U)$. 
We write $r_{ij} = s_{ij}+x_{ij}$.
Let $\sigma$ be an automorphism taking $r_{ij}$ to $r_{ik}$. Then 
$$r_{ik}=\sigma(r_{ij}) =\sigma(s_{ij}+x_{ij}) = \sigma(s_{ij}) +\sigma(x_{ij}).$$
As $\sigma$ preserves sizes, $$|r_{ik}-\sigma(s_{ij}) | =|\sigma(x_{ij})| =|x_{ij}| < \epsilon ,$$
so $\sigma(s_{ij})$ is the root of $w_{2^*}(U) \in \wfq[U]$  close to $r_{ik}$. Thus
$\sigma(s_{ij})=s_{ik}$ and the roots of $w_{2^*}(U)$ break up into Galois orbits corresponding
to the Galois orbits of the roots of $g(U)$ that are close to them. This proves the factorisation statement.
\end{proof}
\noindent
{\bf Remarks:}

1. We have the following lemma whose proof we owe to N.~Fakhruddin. This implies that Corollary \ref{choosering} gives examples of deformation rings which are non-integrally closed orders in valuation rings. Ralph Greenberg had asked one of us if there were such examples.

\begin{lemma}\label{naf}
 Let $f$ be a monic polynomial in $\Z_p[X]$ with distinct roots. Then  for all monic polynomials $g \in \Z_p[X]$  of degree $n$ which are close enough to $f$,   we have an isomorphism of $\Z_p$-algebras $$\Z_p[X]/(f) \simeq \Z_p[X]/(g).$$
\end{lemma}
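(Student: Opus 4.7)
The plan is to produce a root $\beta \in R := \Z_p[X]/(f)$ of $g$ that is very close (modulo a high power of $p$) to $\alpha := X \bmod f$, and then to define the desired isomorphism as the $\Z_p$-algebra map $\varphi \colon \Z_p[X]/(g) \to R$ sending $X \mapsto \beta$. Both sides will be free $\Z_p$-modules of rank $n$, so once $\varphi$ is constructed a change-of-basis argument should upgrade the congruence $\beta \equiv \alpha \pmod{pR}$ to the statement that $\varphi$ is an isomorphism.

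To locate $\beta$, I would first exploit the hypothesis that $f$ has distinct roots to decompose $R_{\Q_p} := R \otimes_{\Z_p} \Q_p = \Q_p[X]/(f) = \prod_{i=1}^{r} K_i$ into a product of local fields $K_i = \Q_p[X]/(f_i)$, where $f = \prod f_i$ is the irreducible factorization over $\Q_p$. Letting $\alpha_i \in \Oc_i$ be the image of $\alpha$ in $K_i$, this is a simple root of $f_i$, so $f'(\alpha_i) \neq 0$. Hensel's lemma applied in each $K_i$, where $g(\alpha_i)$ is very small while $g'(\alpha_i)$ remains close to the fixed nonzero element $f'(\alpha_i)$ for $g$ close to $f$, will produce $\beta_i \in \Oc_i$ with $g(\beta_i) = 0$ and $\beta_i$ arbitrarily close to $\alpha_i$. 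Assembling $\beta := (\beta_1, \ldots, \beta_r) \in \prod \Oc_i \subset R_{\Q_p}$ then yields $g(\beta) = 0$ by construction.

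To force $\beta$ into the smaller subring $R$, I would fix once and for all $M \geq 1$ with $p^M \cdot \prod \Oc_i \subseteq R$, which is available because $R$ is a $\Z_p$-order of finite index in $\prod \Oc_i$. Choosing $g$ close enough to $f$ will then ensure $\beta_i - \alpha_i \in p^{M+1}\Oc_i$ for every $i$, giving $\beta - \alpha \in p \cdot R$ and in particular $\beta \in R$ with $\beta \equiv \alpha \pmod{pR}$. The map $\varphi$ is then a well-defined $\Z_p$-algebra homomorphism, and the matrix expressing $(1, \beta, \ldots, \beta^{n-1})$ in the $\Z_p$-basis $(1, \alpha, \ldots, \alpha^{n-1})$ of $R$ will have the form $I + pN$ with $N \in M_n(\Z_p)$, hence determinant in $\Z_p^\times$; so $\varphi$ is surjective, and injectivity is automatic between free $\Z_p$-modules of the same rank.

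The main bookkeeping concerns quantifier order: the constant $M$, the Hensel precision required to solve $g(\beta_i) = 0$ in each $K_i$, and the stronger bound needed to force $\beta_i - \alpha_i \in p^{M+1}\Oc_i$ depend only on $f$ and are each finite, so a single open neighbourhood of $f$ in the coefficient-wise $p$-adic topology on monic polynomials of degree $n$ will satisfy all three conditions simultaneously. This is the only real subtlety, since each individual step is a standard Hensel/Krasner-type argument.
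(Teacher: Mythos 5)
Your proof is correct, and it reaches the conclusion by a route that is recognisably different from the one the paper records, though both ultimately rest on the simple\nobreakdash-root hypothesis and a mod-$p$ finish. You construct the root $\beta$ of $g$ by decomposing $R \otimes_{\Z_p} \Q_p = \prod_i K_i$ into local fields, running single-variable Hensel's lemma in each $\Oc_i$ starting from $\alpha_i$, and then tuning the precision so that $\beta - \alpha$ falls into $pR$, which both forces $\beta \in R$ and makes the transition matrix $I + pN$ with unit determinant. The paper avoids the decomposition entirely: it looks at the polynomial map $\Z_p^n \to \Z_p^n$ sending $\gamma$ (regarded as an element of $R$) to the coefficient vector of the characteristic polynomial of multiplication-by-$\gamma$ on $R$, observes that this map is open near $x$ because its Jacobian there is invertible (again because $f$ has distinct roots), and concludes that every monic $g$ close to $f$ is the characteristic polynomial of some $\gamma \in R$ with $\gamma \equiv x \pmod p$; the resulting injection $\Z_p[X]/(g) \hookrightarrow R$ with finite cokernel is then seen to be an isomorphism by reduction mod $p$. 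In effect the paper packages your $r$ separate Hensel iterations into one application of the $p$-adic inverse function theorem and sidesteps the bookkeeping needed to pull $\beta$ from $\prod \Oc_i$ back into the order $R$; your version is more elementary and more explicit about where the congruence $\beta \equiv \alpha \pmod{pR}$ comes from. The two arguments then agree on the final step: both use that congruence (via your $I+pN$ determinant, or the paper's ``reduce mod $p$'' appeal) to upgrade the map to an isomorphism of free rank-$n$ $\Z_p$-modules.
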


\begin{proof}
 Let $n$ be the degree of $f$,  and consider the map $\alpha:\Z_p^n \rightarrow \Z_p^n$
 defined as follows. Given $\gamma \in \Z_p^n$ we regard it as an element of $\Z_p[X]/(f)$, and send it to the tuple  $(a_1,\cdots,a_n)$ in $\Z_p^n$,  where  the characteristic polynomial of the endomorphism of $\Z_p[X]/(f)$ given by multiplication by $\gamma$,  is $x^n+a_1x^{n-1}+a_2x^{n-2}+\cdots+a_n$. The image of $x$  under $\alpha$ is given by the coefficients of $f$. Using that $f$ has distinct roots, we see that $\alpha$ is an open mapping in a neighborhood of $x$, and deduce that all elements   in a sufficiently small  neighborhood  $V$ of $\alpha(x)$ are in   $\alpha (U)$ where $U$ is the open neighborhood of $x \in \Z_p^n$ consisting of elements that are congruent to $x$ mod $p$. We may assume that elements of $V$ correspond to monic polynomials  of degree $n$ in $\Z_p[X]$ with distinct roots.
From this we deduce  that for $g$ close enough to $f$, we  get a monomorphism    $\Z_p[X]/(g) \ra \Z_p[X]/(f)$ of $\Z_p$-algebras with finite cokernel, and  further deduce this is an isomorphism by reducing mod $p$.
\end{proof}

One could ask, like in Krasner's lemma, for quantitative refinements of this lemma.

2.  Let $a \in W(\FF_{p^f})$
be a nonsquare in $W(\FF_{p^f})$. Then if one chooses $g(U)=U^2-ap^2$ in Corollary~\ref{choosering},
the deformation ring will be an {\it order} in the unramified degree $2$ extension of $W(\FF_{p^f})$.
Thus one can also obtain nontrivial  unramified extensions as completed fields of Fourier coefficients
of the modular form corresponding to our Galois representation.

\vspace{3mm}


\section*{APPENDIX: Modularity of geometric lifts $\rho$ via $p$-adic approximation}
We apply  Theorem \ref{easy}
to proving modularity of  certain representations $\rho:\gal \rightarrow GL_2(\mco)$ where $\mco$ is the valuation ring of a finite extension $K$ of $\QQ_p$ and $p>2$.  

\begin{theorem}\label{approximative} Let $\rho:\gal \to GL_2(\mco)$ be  odd, ordinary, full, balanced,  ramified at only finitely many primes, weight $2$ with determinant $\eps$  and have modular reduction. Further assume that when $\rhobar$ is split at $p$, $\rho$ is flat at $p$. Then
$\rho$ is modular.
\end{theorem}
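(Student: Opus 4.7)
The plan is to apply the $p$-adic approximation strategy of \cite{K}, extended from Witt vectors to ramified $\mco$ by exploiting the finer $\pi$-adic filtration on $\rho$. The aim is to produce, for each $n$, a classical weight $2$ newform $f_n$ of bounded level with $\rho_{f_n} \equiv \rho \pmod{\pi^n}$; compactness in the finite-dimensional space of such newforms then yields a classical $f_\infty$ with $\rho_{f_\infty} = \rho$.

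First I would verify that each truncation $\rho_n := \rho \bmod \pi^n$ satisfies the hypotheses of Theorem \ref{easy}: oddness, ordinariness, fullness (via Lemma \ref{Boston}), balancedness, weight $2$ and determinant $\eps$ descend from $\rho$, while $\rho_n|_{G_v} \in \mathcal{C}_v$ is automatic away from $p$ and, at $v = p$, is guaranteed by the flatness assumption precisely in the split case $5)$ of Proposition \ref{vequalsp}. Applying Theorem \ref{easy} to $\rho_n$ produces a set $T_n \supseteq S$ of $\rho_n$-nice auxiliary primes and isomorphisms
$$R^{ord, T_n-new} \simeq \wfq[[U]] \simeq \T^{ord, T_n-new},$$
so via the Hida family structure on the modular Hecke algebra (using \cite{DT}) there is a classical weight $2$ newform $g_n$ of level divisible only by primes in $T_n$ with $\rho_{g_n} \equiv \rho \pmod{\pi^n}$. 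I would then invoke Ribet--Mazur level lowering at each nice auxiliary prime $q \in T_n \setminus S$: since $\rhobar$ is unramified at $q$ while $g_n$ is Steinberg there, a form of level prime to $q$ congruent to $g_n$ modulo $\pi$ exists. The delicate point is to propagate the full $\pi^n$-congruence with $\rho$ through this descent by invoking Theorem \ref{easy} afresh at each intermediate level and transferring the $\mco/(\pi^n)$-valued eigensystem level by level. Iterating yields a newform $f_n$ of level bounded by the prime-to-$p$ conductor of $\rho$ with $\rho_{f_n} \equiv \rho \pmod{\pi^n}$, and a $\pi$-adic limit then produces $f_\infty$.

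The principal obstacle is this $\pi^n$-adic level-lowering step. Classical Ribet--Mazur gives only mod-$\pi$ descent; in \cite{K}, where $\mco$ is unramified, the $p^n$-adic propagation exploits the splitness of $GL_2(\mco/p^n) \twoheadrightarrow GL_2(k)$. For ramified $\mco$ this splitness fails already at $\pi^2$, so one must instead rely on the $R \simeq \T$ isomorphisms supplied by Theorem \ref{easy} at each intermediate auxiliary level to carry the $\pi^n$-congruence through each descent step --- this is precisely the new input that Theorem \ref{easy} provides over \cite{K}.
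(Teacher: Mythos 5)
Your first step already fails. You write that Theorem \ref{easy} together with the Hida family structure on $\T^{ord,T_n-new}$ produces ``a classical weight $2$ newform $g_n$ of level divisible only by primes in $T_n$ with $\rho_{g_n} \equiv \rho \pmod{\pi^n}$.'' This is not a consequence of Theorem \ref{easy}, and the paper warns of exactly this immediately after stating it: the map $R^{ord,T_n-new}_2 = \wfq[[U]]/(w_2(U)) \twoheadrightarrow \mco/(\pi^n)$ sends $U \mapsto \pi$, but no root of $w_2(U)$ need be congruent to $\pi$ mod $\pi^n$. In the example $\mco = \ZZ_p[\sqrt p]$, $w_2(U) = (U-p)(U-2p)(U-3p)$, $\rho_3$ from $U \mapsto \sqrt p$, every classical weight-$2$ point has $U$-coordinate of valuation $1 > 1/2 = v(\sqrt p)$, so none is congruent to $\sqrt p$ mod $\pi^3$. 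Producing a single $\mco$-valued classical point realizing $\rho_n$ is precisely the content of the much harder Theorem \ref{hard}, which the appendix deliberately avoids. Your second step compounds this: even granting the existence of $g_n$, Ribet--Mazur descent at $q \in T_n \setminus S$ is a mod-$\pi$ statement, and re-invoking Theorem \ref{easy} at the lower level again only gives a point of a deformation ring, not a classical newform congruent mod $\pi^n$. So the recursion never closes.

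The paper sidesteps both problems by working with modules inside a fixed Jacobian rather than with newforms. From $R^{ord,T_n-new} \simeq \wfq[[U]] \simeq \T^{ord,T_n-new}$ one deduces (Corollary after Theorem \ref{approximative}) that $\rho_n$ embeds as an $\mco[G_\Q]$-submodule of the ordinary $p$-divisible group of $J_T$ tensored with $\mco$ --- no classical point of $R^{ord,T_n-new}_2$ is needed. Level lowering is then performed on the Jacobian via Mazur's principle (Proposition \ref{qold}): because $\rho_n$ is unramified at $q \in Q_n$ with $q \not\equiv 1 \bmod p$, and $\mathrm{Frob}_q$ acts on the toric quotient at $q$ by $-qW_q$, the prescribed $\mathrm{Frob}_q$-eigenvalues force $\rho_n$ into the $q$-old subvariety; since $q \equiv -1 \bmod p$ is permitted (needed for $p=3$), the Atkin--Lehner operator $W_q$ must be tracked explicitly. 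After iterating over $Q_n$, $\rho_n$ arises from $J_1(Np^\infty)^{ord}$ for an $N$ independent of $n$. The resulting maps $\alpha_n : \T \to \mco/(\pi^n)$ on the fixed Hida Hecke algebra are compatible (Eichler--Shimura gives $T_r$ acting by $\mathrm{Trace}\,\rho_n(\mathrm{Frob}_r)$), so pass to the limit $\alpha: \T \to \mco$, and Hida's control theorem in weight $2$ yields the newform. In short: your proposal presupposes mod-$\pi^n$ congruences with individual newforms, which is the very difficulty the ramified case introduces, whereas the proof works entirely at the level of $\T$-modules in $p$-divisible groups and only extracts a newform at the very end.
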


 The method of proof   extends   the method of \cite{K} which dealt with the case when  $K$ is unramified over $\QQ_p$. Of course these results are contained in those of the various `$R=T$' theorems
pioneered by Wiles and Taylor-Wiles. Our point here is to provide a 
different argument  using $p$-adic approximation. In  this appendix the proofs are merely sketched: we are rederiving known results  using Theorem \ref{easy} and the strategy of \cite{K}.

Using  Theorem \ref{easy}, we first prove  that  for each $n$, $\rho_n=\rho$ mod $(\pi^n)$, 
is modular of a level which depends on $\rho_n$.
We  then lower the level of $\rho_n$ which paves the way to proving modularity
of  $\rho:\gal \rightarrow GL_2(\mco)$ by successive approximation. 

We have the following corollary of Theorem~\ref{easy}.  We keep the notation of the previous sections: for instance $S_0$ is the set of primes at which $\rhobar$ is ramified, and $S$ is the set of primes at which $\rho$ is ramified.


\begin{cor}
 The representation  $\rho_n$ is isomorphic  as $\Oc[G_\Q]$-representation to a submodule of the ordinary part of the  $p$-divisible group associated to $J_T$ tensored over $\Z_p$ with $\Oc$. Here $J_T$ is the  Jacobian of the  projective  modular curve $\Gamma_1(Np^r) \cap \Gamma_0(Q_n) $; $N$ is some fixed integer independent of $n$;  $r$ is an integer which a priori depends on $n$; and $Q_n$ is the product of primes in the finite set $ T\backslash S$ which depends on $n$. 
\end{cor}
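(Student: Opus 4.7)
The plan is to combine Theorem~\ref{easy} with Hida's control theorem for the ordinary part of Jacobians. First I apply Theorem~\ref{easy} to the balanced, ordinary, full-image, modular, weight-$2$ representation $\rho_n$ to obtain a finite set of primes $T\supseteq S$ and the chain of isomorphisms
$$W(\FF_{p^f})[[U]] \;\simeq\; R^{ord,T-new} \;\simeq\; \TT^{ord,T-new},$$
the second isomorphism following from Diamond--Taylor level raising and Hida's structure results (as recorded in the paper just after Theorem~\ref{easy}), together with a surjection $\TT^{ord,T-new}\twoheadrightarrow \mco/(\pi^n)$ inducing $\rho_n$. Take $N$ to be the prime-to-$p$ Artin conductor of $\rhobar$ (so $N$ depends only on $\rhobar$, hence is independent of $n$) and set $Q_n := \prod_{q\in T\setminus S}q$; by the construction of $T$ in Theorem~\ref{easy}, the primes of $T\setminus S$ are nice primes at which the universal deformation is Steinberg, which is the correct local behaviour to force newness at $Q_n$.

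Next I invoke Hida's description of the Hecke algebra. The algebra $\TT^{ord,T-new}$ acts faithfully on the inverse limit (over $r$) of the ordinary parts of the $p$-divisible groups of the Jacobians of $\Gamma_1(Np^r)\cap \Gamma_0(Q_n)$, and the Galois action on this big module is given by a continuous representation $G_\Q\to GL_2(\TT^{ord,T-new})$ whose specializations at classical weight-$2$ arithmetic points recover the Galois representations attached to the corresponding $T$-new ordinary $p$-adic eigenforms. By the second displayed isomorphism above, composing this universal representation with $\TT^{ord,T-new}\twoheadrightarrow \mco/(\pi^n)$ recovers $\rho_n$.

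To descend to a fixed finite level, I apply Hida's control theorem: the quotient $\mco/(\pi^n)$ of $\TT^{ord,T-new}$ is finite, so the Hecke eigensystem it cuts out is realized at some finite level $Np^rQ_n$ for an integer $r=r(n)$. Writing $J_T$ for the Jacobian of $\Gamma_1(Np^r)\cap \Gamma_0(Q_n)$ and $e$ for Hida's ordinary idempotent, the $\pi^n$-torsion of $e(J_T\otimes_{\Z_p}\mco)[p^\infty]$ carries a commuting action of $G_\Q$ and of $\TT^{ord,T-new}$, and localizing (resp.\ cutting out the kernel of the surjection) produces an $\mco[G_\Q]$-submodule on which $G_\Q$ acts through $\rho_n$.

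The main obstacle — and the only nontrivial step beyond bookkeeping — is the control-theorem passage from the $\Lambda$-adic picture over $\TT^{ord,T-new}$ to a finite-level submodule inside a specific Jacobian. Here one must allow $r$ to grow with $n$ in order to capture the $\pi^n$-adic precision of $\rho_n$; this is the reason $r$ is not uniform in $n$ in the statement. Once $r$ is chosen large enough, the identification of the two-dimensional $\mco/(\pi^n)$-piece inside the ordinary Tate module with $\rho_n$ follows from Eichler--Shimura together with the compatibility between the Hida Hecke-algebra-valued representation and the deformation-theoretic universal representation, which is precisely what the isomorphism $R^{ord,T-new}\simeq \TT^{ord,T-new}$ supplies.
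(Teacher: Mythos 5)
Your proof follows essentially the same outline as the paper's very brief argument, which just cites Theorem~\ref{easy}, Diamond--Taylor level raising, Hida's theory, and ``standard arguments.'' Identifying $R^{ord,T-new}\simeq\TT^{ord,T-new}$ as the key input, invoking Hida's big Galois representation, and using the control theorem to descend to a finite level $Np^{r(n)}Q_n$ are all the right moves, and your explanation of why $r$ depends on $n$ is correct. The remark that the nice primes of $T\setminus S$ carry Steinberg local conditions and hence contribute $\Gamma_0$ level is also the right reasoning.

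There is, however, a genuine (if easily repaired) slip in your choice of $N$. You set $N$ equal to the prime-to-$p$ Artin conductor of $\rhobar$, and justify independence of $n$ by noting that this quantity depends only on $\rhobar$. But in the appendix setting $\rho$ is balanced, which is precisely the hypothesis allowing $\rho$ (hence $\rho_n$ for $n\gg 0$) to be ramified at primes $v\in S\setminus S_0$ where $\rhobar$ is unramified. Such $v$ are not in $T\setminus S$, so they are not absorbed into $Q_n$, and they do not divide the Artin conductor of $\rhobar$. With your $N$, the curve $\Gamma_1(Np^r)\cap\Gamma_0(Q_n)$ would carry no ramification at $v$, so $\rho_n$ could not arise from $J_T$. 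The correct choice of $N$ must encode the prime-to-$p$ ramification of the fixed characteristic-zero lift $\rho$ itself (at all of $S\setminus\{p\}$); this is still independent of $n$ because $\rho$ is fixed, and that is the only property the corollary actually needs.
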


We say that $\rho_n$  as in the corollary arises from $J_T$.
We also say that $\rho_n$ arises from the $p$-divisible group associated to
the ordinary factor  $J_T^{ord}$ of $J_T$.

\begin{proof}
This follows from  Theorem~\ref{easy},  level raising results of \cite{DT} and  Hida's theory. 
These results yield that $R^{ord,T-new} $ surjects onto a  $T$-new ordinary  Hecke algebra 
$\T^{ord,T-new}$ which is finite and torsion free over $\Lambda=\ZZ_p[[T]]$. But using that $R^{ord,T-new} \simeq\wfq[[U]]$ we deduce that we have  an isomorphism of $R^{ord,T-new} $  with $\TT^{ord,T-new}$.
This  implies the corollary  by standard arguments.
\end{proof}

Proposition~\ref{qold}  follows from arguments in  
  \cite{K} with the twist that as we allow 
primes $q \in T \backslash S_0$ that are $-1$ mod $p$, we have to  keep track of the  Atkin-Lehner operators $W_q$ for $q \in  Q_n=T \backslash S_0$. 
We have the relation $W_q^2=\langle q \rangle$.

\begin{prop}\label{qold}
The representation $\rho_n$   arises from the $Q_n$-old subvariety of $J_T^{ord}$, 
and furthermore all the Hecke operators $T_r$, for $r$ a prime not in $T$, act on $\rho_n$ by $Trace(\rho_n(\rm{Frob}_r))$.
\end{prop}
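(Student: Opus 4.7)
The plan is to follow the strategy of \cite{K}, transporting the modularity of $\rho_n$ (which, via $R^{ord,T-new} \simeq \TT^{ord,T-new}$ and the preceding corollary, a priori realises $\rho_n$ on the $Q_n$-new part of $J_T^{ord}$) to the $Q_n$-old subvariety by a mod $\pi^n$ level-lowering argument. The crucial input is that every $q \in Q_n$ is $\rho_n$-nice, so $\rho_n$ is unramified at $q$ with $\rho_n(\mathrm{Frob}_q) = \mathrm{diag}(q,1)$; in particular the Eichler-Shimura congruence $T_q^2 - \mathrm{Tr}(\rho_n(\mathrm{Frob}_q))\, T_q + q \equiv 0 \pmod{\pi^n}$ that characterises $q$-old eigensystems mod $\pi^n$ holds automatically for $\rho_n$.

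For each $q \in Q_n$ I would invoke the Ihara-type exact sequence relating the $q$-old subvariety to the full Jacobian,
\[
0 \to J_0(N'_q) \times J_0(N'_q) \to J_0(N'_q\, q) \to X_q \to 0,
\]
(with $N'_q$ the level obtained from $Np^r Q_n$ by removing $q$), tensored with $\mco$, restricted to the ordinary part, and coupled with the appropriate $\mathrm{new}$-conditions at the remaining primes of $Q_n$. Combining the degeneracy maps with the Eichler-Shimura relation above, the Hecke eigensystem modulo $\pi^n$ attached to $\rho_n$ on the $q$-new side matches an eigensystem on $J_0(N'_q)^2$ mod $\pi^n$, so $\rho_n$ is also realised on the $q$-old part of $J_T^{ord}$. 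Iterating over $q \in Q_n$ produces the realisation of $\rho_n$ in the $Q_n$-old subvariety. The assertion that $T_r$ acts by $\mathrm{Tr}(\rho_n(\mathrm{Frob}_r))$ for $r\notin T$ is then the standard Eichler-Shimura relation applied to the Hecke-stable sub $\rho_n \hookrightarrow J_T^{ord}\otimes_{\Z_p}\mco$.

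The main obstacle, and the sole twist relative to \cite{K}, is the bookkeeping at those $q \in Q_n$ with $q \equiv -1 \pmod p$: the Atkin-Lehner involution $W_q$ satisfies $W_q^2 = \langle q\rangle$, so its eigenvalues $\pm 1$ coincide residually and the decomposition of the $q$-new subvariety into $W_q$-eigenspaces collapses modulo $p$. To carry out the level-lowering step one must record which of the two Steinberg families ${\mathcal C}_q$ (in the sense of Taylor's convention in \cite{T}, and matching the choice made when building $R^{ord,T-new}$) is being matched by the congruence, and use the $W_q$-action on $J_0(N'_q\,q)^{ord}[\pi^n]$ to split off the correct component before applying the degeneracy maps. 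Beyond this additional bookkeeping with $W_q$, the argument is a direct transcription of Khare's strategy from \cite{K}, enabled by Theorem~\ref{easy}.
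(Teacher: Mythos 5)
Your proposal and the paper's proof both aim to descend the realization of $\rho_n$ from $J_T^{ord}$ to its $Q_n$-old part, but the mechanisms are genuinely different, and the one you propose has a gap.

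The paper's proof is a direct application of Mazur's principle: one looks at the N\'eron model $\mathcal J$ of $J:=J_T^{ord}$ at a prime $q\in Q_n$, uses that $\rho_n$ is unramified at $q$ (so the $\pi^n$-torsion module realizing $\rho_n$ injects into $\mathcal J^0(\bar{\FF}_q)$, since the component group is Eisenstein), and exploits the known constraint that $\mathrm{Frob}_q$ acts on the toric quotient of the special fiber by $-qW_q$. Because $\rho_n(\mathrm{Frob}_q)$ has eigenvalues $\{q,1\}$ with $q\not\equiv 1\bmod p$, these eigenvalues cannot match the toric constraint, so the image of $\rho_n$ in the toric quotient must vanish; hence $\rho_n$ lies in the $q$-old subvariety. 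That geometric input — the $\mathrm{Frob}_q$-action on the toric part of the semiabelian special fiber, compared against the eigenvalues of $\rho_n(\mathrm{Frob}_q)$ — is the heart of the argument, and it is absent from your proposal.

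What you propose instead is an Ihara-type sequence together with a claim that because the Eichler--Shimura congruence $T_q^2 - \mathrm{Tr}(\rho_n(\mathrm{Frob}_q))T_q + q\equiv 0 \bmod \pi^n$ holds, the ``eigensystem matches an eigensystem on $J_0(N'_q)^2$ mod $\pi^n$, so $\rho_n$ is realised on the $q$-old part.'' This last inference is precisely what needs to be proved, not asserted: knowing that the Hecke eigensystem attached to $\rho_n$ satisfies the $q$-old relation does not by itself produce a copy of the $\Oc/(\pi^n)[G_\Q]$-module $\rho_n$ inside the old subvariety — that is exactly the content of level lowering. Moreover, Ihara's exact sequence is the tool for level raising (it controls the cokernel of the old-to-new map), and is not the mechanism by which one lowers the level at a prime $q\not\equiv 1\bmod p$; Mazur's principle is. Finally, your discussion of $W_q$ — keeping track of which Steinberg component to ``split off'' before applying degeneracy maps — is a different use of $W_q$ from the one the proof requires: in the paper $W_q$ enters because the toric-part Frobenius acts by $-qW_q$, and since $W_q$ acts on the relevant Hecke eigenspace by a scalar $\alpha_q$, this pins down the would-be toric eigenvalue and yields the contradiction. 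You should replace the Ihara/eigensystem-matching step with this N\'eron-model argument.
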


\begin{proof}
This is an application of Mazur's principle (see \S 8 of \cite{Ken}) and uses that $q \not \equiv 1$ mod $p$. 
The principle relies on the fact that 
the ${\rm Frob}_q$-action
on  unramified finite $G_{\Q_q}$-submodules  of the torsion points of $J_T$  whose reduction mod $q$ is in the   `toric part' of the reduction mod $q$ of  $J_T$ is constrained. 
Namely,
on the `toric part' the Frobenius ${\rm Frob}_q$ acts by  $-qW_q$ where $W_q$  is the Atkin-Lehner involution. We flesh this out this below.

Consider a prime  $q \in Q_n$ where $Q_n=T \backslash S$. Then decompose $\rho_n|_{D_q}$  (which is unramified
by hypothesis) into $\mco/(\pi^n) \oplus \mco/(\pi^n)$, with basis $\{e_n,f_n\}$  with ${\rm Frob}_q (e_n)= -W_q.e_n$  and ${\rm Frob}_q (f_n)=-qW_q.f_n$ for some character $\epsilon$ as above.  The action of $W_q$ is by a scalar $\alpha_q$, and so we have ${\rm Frob}_q (e_n)= - \alpha_qe_n$  and ${\rm Frob}_q (f_n)=-q\alpha_qf_n$ .

Using irreducibility of  $\rho$, Burnside's lemma gives that $\rho(\FF_{p^f}[\gal]) = M_2(\FF_{p^f})$ 
  and
hence by Nakayama's lemma $\rho_n(\mco[\gal]) = M_2(\mco/(\pi^n))$. 
Thus using the surjection from the Hecke algebra acting on $J_T^{ord}$ to $\mco/(\pi^n)$ 
we deduce that $\rho_n$ arises from  an eponymous submodule of $J:=J_T^{ord}$.

The fact that the $q$-old subvariety is stable under the Galois and Hecke
action will allow us to  deduce that $\rho_n$  arises from  the $q$-old subvariety of $J$  if  we can show that $e_n$  is contained in the $q$-old subvariety of $J$.

 Let $\mathcal J$
be the N\'eron model at $q$ of $J$. Note that as $\rho_n$ is unramified at $q$ it
maps injectively to ${\mathcal J}_{/\FF_{p}}(\bar \FF_{p}) $ under the reduction map. Now if the claim
were false, as the group of connected components  of $\mathcal J$ is Eisenstein,  
we would deduce that the reduction of $e_n$ in ${\mathcal J}^0 (\bar \FF_{p})$ maps non-trivially (and hence its image has order divisible by $p$) to the $\bar \FF_{p}$-points of the torus which is the quotient of ${\mathcal J}^0$  by the image of the $q$-old subvariety
(in characteristic $q$). But as we recalled above, it is well-known (see \S $8$ of \cite{Ken}) 
that  ${\rm  Frob}_q$ acts on the $\bar \FF_p$-valued points of this toric quotient (isogenous to the torus $T$  of $\mathcal J^0$ , the latter being a semiabelian variety that is an extension of  an abelian variety  by $T$) by  $-qW_q$ which gives a contradiction.  Now taking another prime  $q' \in Q_n$ and working within the $q$-old subvariety of  $J$, by the same argument we see that $\rho_n$  occurs in the $\{q,q'\}$-old subvariety of $J$, and eventually that $\rho_n$  occurs in the $Q_n$-old subvariety of $J$.  The last part of the proposition is 
then clear.
\end{proof}

We finish the proof of the main theorem of the appendix Theorem \ref{approximative}.

\begin{proof}
For an integer $N$ prime to $p$, denote by $J_1(Np^\infty)^{ord}$ the direct limit of the ordinary parts of $J_1(Np^r)$ as $r$ varies. From Proposition~\ref{qold} it is easy to deduce that  $\rho_n$, the mod $(\pi^n)$ reduction of $\rho$,  
arises from $J_1(Np^\infty)^{ord}$  for some fixed integer $N$ that is independent of $n$. Let  $\T$ be the Hida  Hecke 
algebra acting on $J_1(Np^\infty)^{ord}$, generated by the Hecke operators $T_r$ with $ r$ prime and prime to $Np$. 
We claim that the $\rho_n$  give compatible morphisms from $\T$  to the $\mco/(\pi^n)$. To get 
these morphisms, let $V_n$ denote a realisation of the representation $\rho_n$ in  $J_1(Np^\infty)^{ord}$ which 
exists by Proposition~\ref{qold}. Then $V_n$ is $\gal$-stable, and hence $\T$-stable (because of the 
Eichler-Shimura congruence relation mod $r$, that gives an equality of 
correspondences  $T_r = {\rm Frob}_r +r \langle r \rangle{ {\rm Frob}_r}^{-1}$ where ${\rm Frob}_r$ is 
the Frobenius morphism at $r$). So $V_n$ is a $\T$-module, and because of the absolute 
irreducibility (only the scalars commute with the $\gal$-action) $\T$ acts via a morphism 
$\alpha_n : \T \rightarrow \mco/(\pi^n)$ as desired, and the $\alpha_n$'s  are compatible 
again because of the congruence relation. This gives a morphism 
$\alpha: \T \rightarrow  \mco$ such that the  representation associated to $\alpha$ is isomorphic
to $\rho$ which finishes proof of  the theorem.  Then using that the determinant of $\rho$ is $\eps$ and Hida's control theorem, we deduce that $\rho$ arises from a weight 2 newform.

\end{proof}

\subsection*{Improvements to the method}

The weight 2 assumption on $\rhobar$ and  the lifts we consider (and the assumption on the determinant)  is  for convenience, and our  methods  apply to  $\rho$ of weight $k \geq 2$ (the Hodge-Tate weights are $(k-1,0)$),  provided that $\rhobar$ is distinguished at $p$.

The {\it fullness} assumption  on $\rho$ and $\rho_n$  used in the proof of Theorem \ref{easy} arises from the fact that
in its absence Lemma \ref{shekharslemma} is not true. On the other hand one can prove a more qualitative   but less restrictive version of this lemma. 

 \begin{lemma} \label{shekharslemma2} Let $p \geq 3$. Recall  $\Oc$ is the ring of integers of a finite extension of $\Q_p$, with
uniformiser $\pi$ and residue field $\FF_{p^f}$. Let $G \subset GL_2(\mco)$ be a closed subgroup. Assume that the image $G_1$  of $G \to GL_2(\FF_{p^f})$, contains $SL_2(\FF_{p^f})$ and satisfies hypothesis of Lemma \ref{p35}.   Then 
$\dim H^1(G,\ad) $ is a finite abelian group. 
\end{lemma}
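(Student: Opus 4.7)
The plan is to mimic the beginning of the proof of Lemma~\ref{shekharslemma}, but replace its explicit calculation (which relied on fullness at level $\pi^2$) by a soft argument using the fact that closed subgroups of $GL_2(\mco)$ are $p$-adic analytic, hence topologically finitely generated. Throughout, cohomology should be understood as continuous cohomology.

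First I would let $\Gamma$ be the kernel of the reduction map $G \twoheadrightarrow G_1$, so that $\Gamma$ is a closed subgroup of the congruence kernel
$$K_1 := \ker\bigl(GL_2(\mco) \to GL_2(\FF_{p^f})\bigr).$$
Since $\Gamma$ acts trivially on $\ad$ (it reduces to the identity, and $\ad$ has coefficients in $\FF_{p^f}$), the $G$-module $\ad$ has $\ad^\Gamma = \ad$, and the inflation-restriction sequence becomes
$$0 \to H^1(G_1,\ad) \to H^1(G,\ad) \to H^1(\Gamma,\ad)^{G_1} = \mathrm{Hom}(\Gamma,\ad)^{G_1}.$$
Because $G_1$ contains $SL_2(\FF_{p^f})$ and satisfies the hypotheses of Lemma~\ref{p35}, that lemma gives $H^1(G_1,\ad) = 0$, so we obtain an injection $H^1(G,\ad) \hookrightarrow \mathrm{Hom}(\Gamma,\ad)^{G_1}$. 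Thus it suffices to prove the right-hand side is a finite abelian group.

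Next I would observe that any continuous homomorphism $\Gamma \to \ad$ factors through the maximal elementary abelian $p$-quotient $\Gamma^{\mathrm{ab}}/p := \Gamma/\overline{[\Gamma,\Gamma]\Gamma^p}$, since $\ad$ is an $\FF_p$-vector space. So
$$\mathrm{Hom}(\Gamma,\ad)^{G_1} \hookrightarrow \mathrm{Hom}\bigl(\Gamma^{\mathrm{ab}}/p,\,\ad\bigr),$$
and it suffices to show $\Gamma^{\mathrm{ab}}/p$ is finite, i.e.\ that $\Gamma$ is topologically finitely generated.

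The main (and only) point to verify is therefore that a closed subgroup of $K_1$ is topologically finitely generated. Since $\mco$ is the ring of integers of a finite extension of $\Q_p$, the group $GL_2(\mco)$ is a compact $p$-adic analytic (Lie) group, and $K_1$ is an open (in fact uniform, after passing to a deep enough congruence subgroup) pro-$p$ subgroup of it. By the theory of $p$-adic analytic groups (see Dixon--du Sautoy--Mann--Segal), every closed subgroup of a compact $p$-adic analytic group is itself compact $p$-adic analytic, hence topologically finitely generated. Consequently $\Gamma^{\mathrm{ab}}/p$ is a finite $\FF_p$-vector space, $\mathrm{Hom}(\Gamma,\ad)^{G_1}$ is finite, and so $H^1(G,\ad)$ is a finite abelian group, as claimed.

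The hard part here is really the bookkeeping: in Lemma~\ref{shekharslemma} one had an explicit bound of $1$ on the dimension because fullness at level $\pi^2$ pinned down $\Gamma$ modulo $\Gamma_2$. Without that hypothesis one loses any concrete bound, but finiteness survives purely from the $p$-adic analytic nature of $\Gamma$. Any effective refinement of the lemma (replacing ``finite'' by a bound depending only on $[\mco:\Z_p]$) would require tracking the number of topological generators of an arbitrary closed subgroup of $K_1$, which is the obstacle I would expect to encounter if one wanted a quantitative version.
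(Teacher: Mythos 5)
Your proof is correct and follows exactly the route the paper indicates in its one-sentence sketch: kill $H^1(G_1,\ad)$ via Lemma~\ref{p35} in inflation-restriction, then observe that the kernel $\Gamma$ of $G \to G_1$ is a topologically finitely generated pro-$p$ group (which you justify, appropriately, by the structure theory of compact $p$-adic analytic groups), so that $\mathrm{Hom}(\Gamma,\ad)$ is finite. This is precisely the paper's intended argument, filled in with the necessary detail.
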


The proof uses  (i) $H^1(G_1,\ad)=0 $ (cf. Lemma \ref{p35}), and (ii)  the kernel of the homomorphism  $G \rightarrow G_1$ is a finitely generated pro-$p$ group.

G.~B\"ockle  has observed that  using such a lemma,  one can remove the assumption on fullness of image of $\rho$, made in the arguments in the appendix,  by using base change to totally real solvable extensions $F/\Q$ and considering nearly ordinary deformations of $\rhobar|_{G_F}$  and Hida's nearly ordinary Hecke algebras.   Choose a totally real solvable extension $F/\Q$ disjoint from the field cut out by $\rho$, and  whose degree $d=[F:\Q]$  is $>\dim H^1(G,\ad) $.  Then by  choice of $F$ and the technique of killing dual Selmer groups, and obtaining smooth quotients of deformation rings of  the expected dimension of this paper, one obtains  nearly ordinary deformation rings that are power series rings in $d+\delta+1$ variables, where $d$ is the degree of $F$ over $\Q$ and $\delta$ the Leopoldt defect for $F$ and $p$,  and such that $\rho$ mod $\pi^n$ arises from the corresponding universal deformation. One then  would exploit the fact that Hida's nearly ordinary Hecke algebra is  finite flat over $\Z_p[[X_0,\cdots,X_{d+\delta}]]$.  By  more elaborate level lowering methods (as in \cite{Thorne})  one would then  by a similar strategy as above prove that $\rho|_{G_F}$ is automorphic which suffices as $F/\Q$ is solvable.

To make the present method of modularity lifting  more robust,  one would ultimately hope to also remove  the conditions of {\it ordinarity} and being {\it balanced} on geometric $\rho$, and show assuming $\rhobar$ is modular and irreducible, that $\rho$  mod $(\pi^n)$ is modular  of some level for each $n$, and hence  by level lowering techniques deduce that $\rho$ itself arises for a new form. For this  again base change to solvable totally real  extension of $\Q$ and the   
{\it trivial primes} of \cite{HR} might be useful to remove the {\it balanced condition}, and to remove ordinarity one would  have to use  
Coleman families  and work on the eigenvariety.  

\end{document}